\documentclass[]{article}
\usepackage{float}
\usepackage{soul}
\usepackage[margin=20mm,right=30mm]{geometry}
\usepackage{amsmath,amsthm,amssymb,mathrsfs}
\usepackage{enumitem} 
\setlist[description]
{noitemsep,labelindent=4ex}
\setlist{noitemsep}
\usepackage{graphicx}
\usepackage{import}
\usepackage{wrapfig}

\usepackage[font=small]{caption} 
\usepackage[rightcaption]{sidecap}

\usepackage{setspace}
\usepackage[backgroundcolor=white,bordercolor=white]{todonotes}
\graphicspath{{./figures/}}

\usepackage[colorlinks=true,linkcolor=blue,citecolor=blue]{hyperref}
\bibliographystyle{alpha}

\usepackage{tikz}
\usepackage{tikz-cd}

\numberwithin{equation}{section}
\newtheorem{counter}{Counter}[section]

\newtheorem{definition}[counter]{Definition}
\newtheorem{definition-proposition}[counter]{Definition-Proposition}
\newtheorem{theorem}[counter]{Theorem}
\newtheorem{lemma}[counter]{Lemma}
\newtheorem{corollary}[counter]{Corollary}
\newtheorem{example}[counter]{Example}
\newtheorem{proposition}[counter]{Proposition}
\newtheorem{remark}[counter]{Remark}

\newcommand{\idty}{{\mathrm{1}\mkern-4mu{\mathchoice{}{}{\mskip-0.5mu}{\mskip-1mu}}\mathrm{l}}}
\newcommand{\C}{\mathbb{C}}
\newcommand{\CC}{\mathbb{C}}
\newcommand{\DD}{\mathbb{D}}
\newcommand{\ZZ}{\mathbb{Z}}
\newcommand{\WW}{\mathbb{W}}

\DeclareMathOperator{\Hom}{Hom}
\DeclareMathOperator{\End}{End}
\DeclareMathOperator{\Fun}{Fun}
\DeclareMathOperator{\Rep}{Rep}
\DeclareMathOperator{\SL}{SL}

\newcommand{\K}{\mathcal{K}}
\DeclareMathOperator{\RT}{RT} 
\DeclareMathOperator{\Rib}{Rib}
\DeclareMathOperator{\Planar}{Planar} 

\newcommand{\G}{\mathcal{G}}
\renewcommand{\P}{\mathcal{P}}
\newcommand{\cB}{\mathcal{B}}
\newcommand{\cA}{\mathcal{A}}
\newcommand{\cC}{\mathcal{C}}
\renewcommand{\O}{\mathcal{O}}

\renewcommand{\mod}{\mathrm{mod}}
\newcommand{\Ch}{\operatorname{Ch}}

\DeclareMathOperator{\StratCob}{\mathbb{C}ob} 
\DeclareMathOperator{\Vect}{Vect}
\DeclareMathOperator{\Cat}{Cat}
\DeclareMathOperator{\Bimod}{Bimod}
\DeclareMathOperator{\QCoh}{QCoh}
\DeclareMathOperator{\Sk}{Sk}
\DeclareMathOperator{\SkMod}{SkMod}
\DeclareMathOperator{\SkCat}{SkCat}
\DeclareMathOperator{\SkAlg}{SkAlg}
\DeclareMathOperator{\BrTens}{BrTens}
\DeclareMathOperator{\Res}{Res} 
\DeclareMathOperator{\act}{act}
\DeclareMathOperator{\id}{id}

\DeclareMathOperator{\skel}{skel} 

\newcommand{\TetSurf}{\Sigma_{tet}} 
\newcommand{\Winv}{\mathbb{W}^{inv}_{\triangle}}
\newcommand{\Iloc}{\mathcal{I}_\triangle(K)}
\newcommand{\Mbulk}{M_\triangle}
\newcommand{\knotcomp}{S^3\setminus\overline{K}}


\newcommand{\cO}{\mathcal{O}}

\usepackage{xcolor}
\definecolor{our-orange}{HTML}{D86600}
\definecolor{our-blue}{HTML}{9BBBD6}
\definecolor{our-defect}{HTML}{77CCAA}
\definecolor{skein}{HTML}{8C0A60}

\usepackage{todonotes}

\title{Parabolic skein modules}
\author{Jennifer Brown and David Jordan}

\begin{document}

\maketitle

\begin{abstract}
We develop skein theory for 3-manifolds in the presence of codimension-one defects, focusing especially on defects arising from parabolic induction/restriction for quantum groups.  We use these defects as a model for the quantum decorated character stacks of \cite{JLSS2021}, thus extending them to 3-manifolds with surface defects.  As a special case we obtain knot invariants closely related to the ``quantum $A$-polynomial", and we give a concrete method for computation resembling the approach of Dimofte and collaborators based on ideal triangulations and gluing equations.
\end{abstract}


\tableofcontents

\section{Introduction}

The skein module of a 3-manifold $M$ is a vector space spanned by certain embedded and decorated 1-simplices in $M$, modulo locally defined linear relations modelled on the representation theory of a quantum group, or more generally a ribbon tensor category.  Skein modules famously give a model for Wilson loops and their quantization in Chern-Simons theory, and are at the heart of quantum topology, the branch of mathematics which applies techniques from quantum field theory to the study of low-dimensional topology.

In this paper, we initiate the study of skein modules of 3-manifolds in the presence of codimension-one defects.  We pay special attention to so-called parabolic defects, coming from functor of parabolic induction and restriction on quantum groups.  We apply parabolic defect skein modules to construct invariants of knots -- more generally of 3-manifolds with boundary -- and we provide an effective and elementary (cluster) algebraic formalism for computing with the resulting invariants.   Our main application is to give a new approach to the so-called ``quantum $A$-polynomial" invariant of knots, in the framework of extended TQFT with defects.

We give both an invariant definition and an effective method for doing computations relative to a triangulation of the manifold.  Our computational method recovers and refines a celebrated physical construction of Dimofte \cite{Dimofte2011} which inspired this work; our invariant construction also clarifies the apparent dependence on the choice of ideal triangulation in \textit{loc. cit.}  In our approach, there is an invariant skein-theoretic definition of a quantum $A$-ideal, and then the choice of an ideal triangulation gives a method for doing computations, with a tight control of the effect of the triangulation on the resulting computation.   While we believe our approach via parabolic defects to be new, let us mention a likely close relation to recent independent works \cite{Garoufalidis_Yu_2024} and \cite{Panitch_Park_2024}, which also compute skein modules of 3-manifolds relative to a triangulation. See Section \ref{sec:related} for more discussion of these and other related work.

While our primary focus for applications is to the quantum $A$-polynomial, we believe that the general notion of skein modules for 3-manifolds with surface defects developed here will have broad applications in many different settings.
Defect skein modules give a skein theoretic model for the well-known appearance of central tensor categories as domain walls between 4D Crane-Yetter type TQFT's determined by a braided tensor category, hence they are also closely related to domain walls between Witten-Reshetikhin-Turaev 3D theories.

\paragraph{The classical A-polynomial.}  The classical $A$-polynomial was constructed in \cite{Cooper_Culler_Gillet_Long_Shalen_1994}, and has been studied extensively (see for example \cite{Cooper_Long_1996, BoyerZhang1998,champanerkar2003polynomial, zickert2016ptolemy,HMP}). 
Given a knot $K$ in $S^3$ with knot complement $S^3\backslash K$ one studies the representation variety $\mathrm{Hom}(\pi_1(S^3\backslash K),\SL_2(\C))$, consisting of $\SL_2(\C)$-representations $\rho$ of the fundamental group of $S^3\backslash K$, and its subspace $\mathrm{Hom}^B(\pi_1(S^3\backslash K),\SL_2(\C))$, consisting of those $\rho$ whose value along paths in the boundary are constrained to lie in the standard Borel subgroup\footnote{The prominence of the representation variety, rather than the character variety, in the definition of the $A$-polynomial may be explained by the fact $m$ and $\ell$ can always be conjugated to lie in $B$,  uniquely so for generic $m$ and $\ell$.  Hence, requiring boundary monodromies to lie in the standard Borel is very close to quotienting by gauge equivalence in $G$, but without some of the baggage coming from a stacky $G$-action.}.  Such paths are generated by the meridian loop $m$ and longitude loop $\ell$ generating $\pi_1(T^2)$, where we identify $T^2=\partial(S^3\backslash K)$.

Denoting by $M$ and $L$, respectively, the upper left entries of the $2\times 2$-matrices $\rho(m), \rho(\ell)$, we obtain a restriction map,
\begin{align}\label{eqn:Res}
\operatorname{Res}:\mathrm{Hom}^B(\pi_1(S^3\backslash K),\SL_2(\C))&\to\mathbb{C}^\times\times \mathbb{C}^\times.\\
\rho\mapsto (M,L)\nonumber
\end{align}
The image of $\operatorname{Res}$ determines a one-dimensional sub-variety of $\C^\times \times \C^\times=\mathrm{Spec}(\C[M^{\pm 1},L^{\pm 1}])$, and its defining equation $A_K(M,L)=0$ is called the (classical) $A$-polynomial.  Despite its elementary definition, the classical $A$-polynomial is very difficult to compute exactly in practice; indeed the most substantial compendium of data on the classical $A$-polynomial was obtained by indirect sampling and interpolation methods \cite{Culler_A-polys}.

\paragraph{The quantum $A$-polynomial.} From several influential papers by research groups spanning both the mathematics and physics communities emerged a proposal for a certain ``quantum A-polynomial" knot invariant -- an element
\[A^q_K(M,L)\in \C_q[M^{\pm 1},L^{\pm 1}] := \CC(q)\langle M^{\pm 1}, L^{\pm 1} \mid LM = q ML \rangle,\]
which would $q$-deform the classical $A$-polynomial.  Due to its importance in the field of quantum topology, the quantum $A$-polynomial has by now far surpassed the classical $A$-polynomial as a focus of study -- for instance the vast majority of the 500+ papers citing \cite{Cooper_Culler_Gillet_Long_Shalen_1994} concern the quantum $A$-polynomial, and QFT-related techniques for its computation.

Among the proposals put forward to quantize the $A$-polynomial we highlight on the mathematical side (a) an approach via skein theory (see for instance \cite{Frohman_Gelca_LoFaro_2001,Gelca_2001}, more recently \cite{Panitch_Park_2024,Garoufalidis_Yu_2024}), (b) a related approach involving difference equations satisfied by the coloured Jones polynomial (sometimes called ``the AJ-conjecture") \cite{Garoufalidis_2004, Le2006, GarLe2005}.  On the physics side, we highlight (c) an approach using perturbative Lagrangian quantum field theory \cite{Gukov2005, Aganagic_Vafa_2012,Gukov_Sulkowski_2012}, another based on topological recursion \cite{dijkgraaf2009volume, borot2015all, Eynard_Garcia-Failde_Marchal_Orantin_2024}, and finally (e) an approach (also motivated by Lagrangian QFT, specifically quantum Chern-Simons theory) involving the combinatorics of ideal triangulations and the resulting ``gluing equations" \cite{Dimofte2011,DGG2016}. 
A number of generalisations of the quantum $A$-polynomial based on string/M-theoretic interpretations and BPS counting have been proposed \cite{Gukov_Sulkowski_Awata_Fuji_2012,Aganagic_Vafa_2012,Fuji_Gukov_Sulkowski_2013,Garoufalidis_Lauda_Le_2018,Ekholm_Ng_2020}

Our aim in this paper is to give a new approach synthesizing (a) and (e), using ideas from the theory of character stacks and extended topological field theory, specifically the papers \cite{JLSS2021}, \cite{GJS2021}.  We propose a manifestly well-defined topological invariant, and we give a method to compute it.  In future work, we intend to return to the relation of our work to approach (b).

\bigskip
The remainder of the introduction is organised as follows.  We first reformulate the classical $A$-polynomial via a certain moduli space of decorated local systems called the decorated character stack.  We then motivate a completion of the moduli decorated character stack called the redecorated character stack, and we discuss a system of coordinates on the latter generalising Wilson loops.  Motivated by these coordinates, we introduce the notion of parabolic skeins, and in this framework we define the quantum $A$-ideal which is our main object of study.  We further introduce a certain localisation of the quantum $A$-ideal depending on the choice of an ideal triangulation of the knot.  Finally, we explain an algorithm for computing the localised quantum $A$-ideal, and give a number of examples.

\subsection{Classical geometry}

\paragraph{Decorated local systems.}
A more geometric formulation of the $A$-polynomial may be given in terms of so-called decorated local systems.  We follow the framework of \cite{JLSS2021}, which is a stacky refinement of the well-known and closely related framework of Fock and Goncharov \cite{Fock_Goncharov_2009a,Fock_Goncharov_2009b} (see \cite[Remark 1.20]{JLSS2021} for a more careful comparison of the two).

Let $G$ be a reductive group.  By a $G$-local system on a manifold $M$, we mean a principal $G$-bundle $E$ over $M$ equipped with a flat connection, giving isomorphisms $E_x\to E_y$ for any homotopy class of paths connecting $x$ to $y$, compatibly with compositions of paths. The $G$-character stack $\operatorname{Ch}_G(M)$ of a manifold $M$ is the moduli stack of $G$-local systems on $M$; its GIT quotient variety is called the character variety.

Recall that a framing of $E$ at a point $x$ is an identification $E_x\cong G$ of $G$-torsors.  The representation variety, also known as the framed character variety, is the moduli stack\footnote{We note that when $M$ is connected the framing precludes the existence of automorphisms, ensuring that the moduli stack is indeed a plain variety.} of $G$-local systems equipped with a framing at a fixed basepoint of $M$.  By definition the character stack and the character variety are the stack quotient (respectively, GIT quotient) of the representation variety, by the (typically, non-free) $G$-action of changing framing.

Fix a Borel subgroup $B$ of $G$, and let $T=B/(B,B)$ denote the universal Cartan.  Further, fix a bipartition of $M$ into open regions $M_G$ and $M_T$, separated by a codimension one submanifold $M_B$.  By a \emph{decorated local system} we mean a triple $(E_G,E_T,E_B)$ consisting of a $G$-local system $E_G$ over $M_G$, a $T$-local system $E_T$ over $M_T$, and a sub-$B$-local system of $E_G\times E_T$ over $M_B$.  The data $E_B$ is equivalent to that of a section $\sigma_B$ of the $(G\times T)/B \simeq G/N$-bundle associated to $E_G\times E_T$ over $M_B$, so we may equally consider a triple $(E_G,E_T,\sigma_B)$. The decorated character stack is the moduli stack of such triples; its GIT quotient variety is the decorated character variety.  The decorated representation variety, also known as the framed decorated character variety, is the moduli stack we obtain by adding the data of a framing of $E_G$ or $E_T$ at some fixed basepoint in each connected open region of $M$.

Given a knot $K$ with tubular neighbourhood $\overline{K}$, and knot complement denoted $\knotcomp$, let us fix a torus $T^2$ contracting onto the boundary torus.  By construction this separates two open components, one being a homeomorphic copy of $S^3\backslash K$, the other being a neighbourhood $T^2\times I$ of the boundary.  We declare a bipartition of $\knotcomp$ by labelling the former region with $G$, the latter region with $T$, and hence the interface $T^2$ them with $B$.  We choose a pair of a $G$- and $T$-basepoint on either side of the defect, and consider the decorated representation variety.

We have a natural ``forgetful" map to the $T$-representation variety of $T^2$, which only remembers $E_T$ with its framing.  The $T$-representation variety of $T^2\times I$ is $\CC^\times \times  \CC^\times$, with coordinates $M$ and $L$ recording the holonomy of the meridian and longitude respectively. The affine closure of the image of the decorated representation variety coincides by definition with the image of the restriction map \eqref{eqn:Res}.  Hence we obtain an equivalent formulation of the classical $A$-polynomial, as the annihilator ideal of the constant function $1$ on the decorated representation variety, under the action by $\C[M^{\pm 1},L^{\pm 1}]$.

\paragraph{Redecorated local systems}
The Borel subgroup $B$ features in the definition of decorated local systems as a means to couple the $G$- and $T$-monodromies.  However, the representation theory of $B$ and its associated geometry presents us with three related obstacles:
\begin{itemize}
    \item \underline{Geometrically},  the moduli space $(G\times T)/B \simeq G/N$ of $B$-torsors in $G\times T$ is not affine. 
    \item \underline{Algebraically}, the group $B$ admits no non-zero compact projective representations.  In particular the category $\Rep B$ does not have enough compact projective objects.
   
    \item \underline{Topologically}, $\Rep B$ is not a 2-dualizable tensor category in the sense of \cite{douglas2020dualizable, brandenburg2014reflexivity, lurie2008classification}.
\end{itemize}

The geometric obstacle means that -- in contrast to ordinary representation varieties -- we cannot hope to directly construct decorated representation varieties and their quantizations explicitly using rings and modules; the passage to global sections will fail both conservativity and exactness.  The algebraic obstacle complicates any attempt at a skein-theoretic construction, because for non-semisimple categories such as $\Rep B$, skeins are typically coloured with compact-projective objects \cite{Brown_Haioun_2024}\footnote{To quote, ``compact objects are as necessary in this subject as air to breathe." \cite{neeman2001triangulated,BZFN}}.  Echoing this, the topological obstacle precludes defining any invariant of surfaces based on $\Rep B$ via the cobordism hypothesis.

Happily all three of obstacles have a common resolution, and one that is familiar in geometric representation theory.  Observe that we have an isomorphisms,
$\mathrm{pt} / B = G\backslash(G/N) /T
$ of stacks, and
hence we obtain standard equivalences of categories,
\[
\Rep B \simeq QC(\mathrm{pt}/B) \simeq  QC_{G\times T}(G/N)
\]

Throughout this paper, we let $\widetilde{G/N}=\mathrm{Spec} (\cO(G/N))$, and we let $\widetilde{\Rep}B$ denote the category of $G\times T$-equivariant $\cO(G/N)$-modules.  Then we have:
\begin{itemize}
     \item \underline{Geometrically}, the variety $\widetilde{G/N}$ is affine; indeed, by construction it is the affinisation of $G/N$.
    \item \underline{Algebraically}, the category $\widetilde{\Rep} B$ has enough compact-projectives; indeed, it is generated under colimits by the regular module $\mathcal{O}(G/N)$ and its twists by representations of $G$ and $T$.
    \item \underline{Topologically}, the tensor category $\widetilde{\Rep}B$ is 2-dualizable as a cp-rigid tensor category, as a 1-morphism in $\BrTens$ \cite{BJS2021}, and in particular as a domain wall between $\Rep G$ and $\Rep T$ (in the sense of \cite{Haioun_2024}).
\end{itemize}
Observe that $\widetilde{G/N}$ contains $G/N$ as a $G\times T$-equivariant open sub-variety; consequently $\widetilde{\Rep}B$ contains $\Rep B$ as an open subcategory, namely of those $G\times T$-equivariant sheaves supported on $G/N$.  Hence we may regard $\widetilde{\Rep} B$ as a sort of categorical completion of $\Rep B$, which features better geometric, algebraic and topological properties.

With these cheerful facts in mind, we define a \emph{redecorated local system} to be a triple $(E_G,E_T,\widetilde{\sigma_B})$, where $E_G$ and $E_T$ are as before, but where $\widetilde{\sigma_B}$ is instead a section of the associated $\widetilde{G/N}$-bundle to $E_G\times E_T$ along $M_B$. The redecorated character stack $\widetilde{\Ch^{dec}_G}(M)$ is the moduli stack of redecorated local systems, and we likewise define redecorated representation varieties to include the additional data of framings at basepoints.

\begin{example}
As an important motivating example, let us consider the case $G=\operatorname{SL}_2$.  In this case $G/N=\C^2\setminus \{0\}$, and hence $\widetilde{G/N}=\C^2$.  Once we fix $E_G$ and $E_T$, the data of $\sigma_B$ in a decorated local system is simply that of a nowhere vanishing section of the associated $\mathbb{C}^2$-bundle.  In a redecorated local system we allow also the zero section (noting that a flat section vanishing at one point of $M_B$ vanishes identically along $M_B$). When $\widetilde{\sigma_B}=0$, the $G$- and $T$-monodromies completely decouple, hence we have a decomposition:
\begin{equation}\label{eqn:redec-decomp}
\widetilde{\Ch_G^{dec}}(M) = \Ch_G^{dec}(M) \sqcup \left(\Ch_G(M_G)\times \Ch_T(M_T)\right),
\end{equation}
where on the left hand side we mean the redecorated character stack, the moduli stack of redecorated local systems.  This decomposition allows us to pass between decorated and redecorated character character stacks (similarly decorated and redecorated representation varieties) simply by taking care about the vanishing of $\widetilde{\sigma_B}\in \C^2$.\end{example}

\begin{remark} Nevertheless, some care must be taken when defining the $A$-polynomial after redecorating.  As before we have a ``forgetful" map,
\[
\widetilde{\Res}:\widetilde{\Ch_G^{dec}}(\knotcomp)\to \C^\times \times \C^\times. 
\]
However, due to the copy of $\Ch_T(M_T) = \CC^\times \times \CC^\times$ in the second component in the decomposition \eqref{eqn:redec-decomp}, this map is in fact \emph{surjective}, hence the associated annihilator ideal in $\C[L^{\pm 1},M^{\pm 1}]$ is the zero ideal. To get something meaningful, we must impose the non-vanishing of $\widetilde{\sigma_B}$.
\end{remark}

\begin{remark} We note that redecorated character stacks that we study here may be regarded as topological analogs of the so-called ``spaces of quasi-maps" introduced by Drinfeld in his study of parabolic local systems on projective curves.  We are grateful to David Ben-Zvi for pointing out this parallel, and encouraging us to embrace redecoration when formulating parabolic skeins.\end{remark}

\paragraph{Wilson loop coordinates}
Convenient coordinates on redecorated representation varieties may be obtained by generalising Wilson loop observables as follows (for simplicity we focus on $G=\SL_2$).  Let us consider a pair of basepoints $x_0^G$ and $x_0^T$, connected by a small arc $\delta$ passing through the $B$-defect.  Recall that a \emph{framing} of $E_G$ at $x_0^G$ (respectively, $E_T$ at $x_0^T$) is a trivialisation, $E_{x_0^G}\cong G$ (respectively $E_{x_0^T}\cong T$).  Associated to various curves in $M$ we have the following functions on the redecorated representation variety.
\vspace{12pt}

\begin{figure}[H]
\includegraphics[width=.45\linewidth]{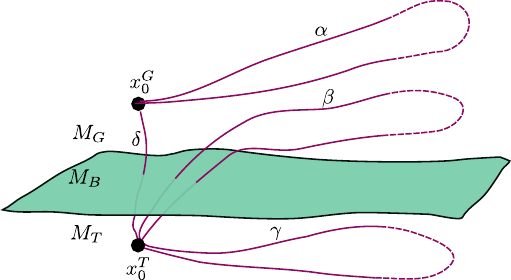}\hspace{1em} \includegraphics[width=.5\linewidth]{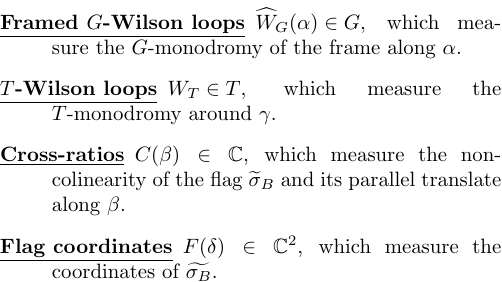}
\caption{Coordinates defined by some typical loops in $M$.}\label{fig:functions-on-Ch}
\end{figure}

Taking the trace of $\widehat{W}_G(\alpha)$ yields the ordinary Wilson loop observable $W_G(\alpha)\in\CC$, while in the case of $T=\C^\times$ the framed and ordinary Wilson loops coincide.  The flag $F(\delta)$ has two coordinates $x$ and $y$.  By requiring either $x$ or $y$ is non-zero we ensure we are on the decorated locus within the redecorated character stack.  We will typically set $y=0$ and $x\neq 0$, which corresponds to fixing the standard Borel.

\subsection{Quantum algebra}

With the preceding reformulation of the classical $A$-ideal in hand, we now turn to an overview of our quantization prescription via parabolic defect skein modules.

\paragraph{Parabolic skeins.}
Recall that the skein module $\SkMod(M)$ of a 3-manifold is a vector space spanned by certain labelled graphs embedded in the 3-manifold, modulo local ``skein relations", modelled on the graphical calculus of some ribbon tensor category.   Two typical examples of ribbon tensor categories of interest are the categories $\Rep_qG$ and $\Rep_qT$, the categories of integral representations of the quantum groups $U_q\mathfrak{g}$ and $U_q\mathfrak{t}$ respectively; the resulting skein theories are very well-studied, especially in the case $G=\SL_2$, and $T=\C^\times$.

A novel component in this paper is the introduction of what we call \emph{defect skein modules} (see \cite{jordan2022langlands, Meusburger2023} for other recent appearances of defects in skein theory). 
Fix a bipartite 3-manifold $M$ as above.
The defect skein module is spanned by labelled graphs in the 3-manifold, where the labels and skein relations are drawn from $\Rep_qG$ and $\Rep_qT$ over $M_G$ and $M_T$, and from $\Rep_qB$ over $M_B$.  Skeins descend from the open regions $M_G$ and $M_T$ into the defect $M_B$ via the parabolic restriction or induction functors, respectively.  We will typically impose a constraint that skeins intersect the $B$-defect transversely (see Figure \ref{fig:passing-through-defect} for an illustration, and see Section \ref{sec:defects-in-skein-theory} for precise definitions).
In \ref{sec:repqB-renormalisation} we show that the transversality condition very naturally reconstructs the category $\widetilde{\Rep_q}B$ of $U_q(\mathfrak{g})\otimes U_q(\mathfrak{t})$-equivariant $\O_q(G/N)$-modules.

\begin{figure}[h]
    \begin{center}
      \includegraphics[width=0.25\linewidth]{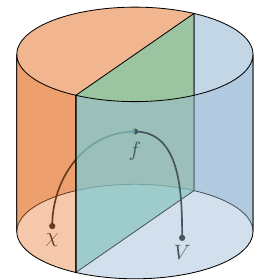}\hspace{20pt}
      \includegraphics[width=0.6\linewidth]{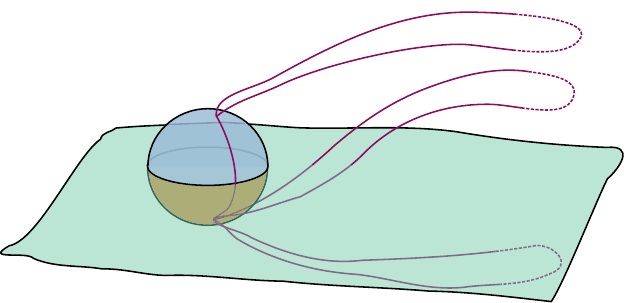}
      \caption{
        At left, a parabolic defect skein:  
        here $V$ and $\chi$ are objects of $\Rep_q G$ and $\Rep_q T$, respectively, and $f\in \Hom_{\Rep_qB}(\operatorname{Ind}_T^B(\chi),\Res_B^G(V))$.  At right, skeins in $M^\circ$ which quantize the functions from Figure \ref{fig:functions-on-Ch}.}\label{fig:passing-through-defect}
    \end{center}
  \end{figure}

We dub the category $\widetilde{\Rep_q}B$ the ``parabolic defect" because it gives a skein-theoretic implementation of parabolic induction/restriction functors in representation theory.  It follows from \cite{Cooke2019,Brown_Haioun_2024} that the \emph{defect skein categories} we construct agree with factorization homology.   The results of \cite{BZFN} therefore apply to give that our parabolic defect skein categories of surfaces recover the decorated quantum character stacks of \cite{JLSS2021} (we remark that the notion of redecoration was only implicit {\it a posteriori} in that work).  Furthermore on 3-manifolds, the results of \cite{BZFN} together with \cite{AFT2017} may be expected to give that our construction recovers the functions on the redecorated character stack of the 3-manifold.

In Lemma \ref{lemma:kauffman-presentation} we give a presentation of the parabolic defect skein module for $\SL_2$ in terms of concrete Kauffman/M\"uller type relations, which we expect will be comprehensible and familiar to skein theorists.
In this presentation, parabolic skeins consist of ribbon tangles, which are unoriented in the $G$-region, and oriented in the $T$-region.
In the $G$-region they satisfy the $\SL_2$ Kauffman bracket skein relations, and in the $T$-region they satisfy simpler $q$-crossing skein relations (see Figure \ref{fig:kauffman-crossings}).  At defects, they satisfy the relations in Figure \ref{fig:skein-defect-relations}.

\begin{figure}[h]\begin{center}
\includegraphics{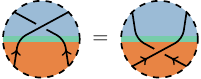}\hspace{20pt}\includegraphics[]{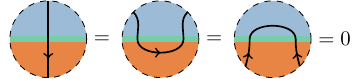}\end{center}
\caption{Additional skein relations satisfied at the parabolic defect.  The first relation is a consequence of stratified isotopy invariance, while the latter three relations are easy computations with parabolic induction and restriction.}
\label{fig:skein-defect-relations}
\end{figure}

The role of framings at basepoints in the geometric construction is played by ``internal skeins" ending at ``gates" in the skein module.
We recall from \cite{GJS2021} the notion of an internal skein module:  given a marked disk -- a ``gate" -- on the boundary of a 3-manifold, we obtain an ``internal skein object" which functorially describes skeins in the 3-manifold ending on the boundary, weighted by a vector in the representation labelling the endpoint.
In this sense internal skeins should be understood as a functorial version of the ``stated skeins" construction of Costantino and Lê \cite{Costantino_Le_2022} (see \cite{Haioun_2022} for a precise comparison of the two formalisms in the case $G=\SL_2$); we also remark a close relation between the relations in Figure \ref{fig:skein-defect-relations} and the ``reduced stated skein" relations.  A key difference however is that we allow the $T$-region to have topology, whereas only the case of contractible $T$-region would recover reduced stated skeins. 
The internal skein algebras and modules by construction carry a compatible action of the quantum group, and this action quantizes the action by changes of framing on the corresponding representation variety.
Whereas in \cite{GJS2021}, framings and internal skein algebras/modules were an intermediate tool to studying ordinary character varieties and their quantizations, in the present paper, as with \cite{JLSS2021}, the ``internal" objects are the primary focus, for the reason that the classical $A$-polynomial is defined via the representation variety rather than the character variety.

\paragraph{The quantum $A$-ideal}
Given a knot $K$, we first construct the knot complement $M=\knotcomp$ by removing a tubular neighbourhood $\overline{K}$ from $S^3$. 
As above, we fix $M_B$ to be any embedded $T^2$ which is disjoint from, but contracts onto, the boundary.  We define $M_T$ to be the resulting $T^2\times I$ trapped between $M_B$ and the boundary, and we define the ``bulk" manifold $M_G$ to be the remainder.
Hence, $M_G$ is simply a retract of $M$ away from its boundary.

The parabolic skein algebra attached to $M$ is naturally a module for $\C_q[M^{\pm 1},L^{\pm 1}] = \SkAlg_T(T^2)$, acting by inserting skeins at the boundary $T^2$.
The annihilator ideal of the empty skein would be the analogue of the classical $A$-ideal, except that this recovers the image of the redecorated character variety, and as we have remarked, this gives the $0$-ideal.  We need to implement, skein theoretically and in the $q$-deformed setting the requirement that $\sigma_B\neq 0$.

The solution comes by judicious use of framings. 
Let us now remove a small ball straddling the $B$-defect, hence intersecting $M_B$ at a 2-disk, and each of $M_G$ and $M_T$ in solid hemispheres (this will play an analogous role to the basepoint in the classical construction).  We denote the resulting manifold $M^\circ$.  We now introduce a $G$ and a $T$-gate in each respective hemisphere, and consider the internal skein module $\SkMod^{int}(M^\circ)$ (see Figure \ref{fig:passing-through-defect}).  The internal skein module is now a module for an algebra $\O_q(G/N)\otimes \C_q[M,L]$, where the $\O_q(G/N)=\C_q[x,y]$ acts by inserting internal skeins on the spherical boundary component.  We set $\widehat{\SkMod^{int}}(M^\circ)$ to be the module obtained by localising $x$ and setting $y=0$.  This is the quantum analogue of the requirement that monodromies lie in the \emph{standard} Borel subgroup as opposed to some conjugate.

\begin{definition}
The quantum $A$-ideal $\mathcal{I}(K)$ is the annihilator in $\C_q[M,L]$ of the module $\widehat{\SkMod^{int}}(M^\circ)$.
\end{definition}

We remark that, by construction, the quantum $A$-ideal at $q=1$ is precisely the classical $A$-ideal.

\paragraph{The localised quantum $A$-ideal}
Recall that any 3-manifold $M$ with boundary admits an ideal triangulation:  this is an expression of $M$ as a union of truncated tetrahedra, such that the triangles at the ``tips" of the tetrahedra form a triangulation of the boundary surface.
In what follows we refer to six ``long edges" and twelve ``short edges" of the truncated tetrahedra.  By a ``long edge skein" we mean a skein which has its ends in the $T$-region gate and passes through the $G$ region following the long edge of the triangulation.  By ``short edge skeins" we refer to skeins sitting entirely in the $T$-region which follow the short edge.  We may label the long-edge by an arbitrary simple object $V(m)$ of $\Rep_q \SL_2$.

\begin{definition}
Fix an ideal triangulation $\triangle$ of $\knotcomp$.    The localised quantum $A$-ideal $\mathcal{I}_\triangle(K)$ is the sum of the annihilator ideals of the long edge skeins with arbitrary labels.
\end{definition}

We make some remarks about this definition.  Firstly, by definition we have a containment $\mathcal{I}(K)\subset \mathcal{I}_\triangle(K)$. Secondly, when $q=1$, the long edge skeins represent functions which compute the cross-ratio of the monodromy along the edges.

Classically, we could simply invert these functions and consider the annihilator ideal of $1$ in the localised skein module.  When we quantise, however, we have only the skein module and not a skein algebra for 3-manifolds:  skeins which pass through the bulk of the 3-manifold cannot be multiplied and hence cannot be inverted.  Thus we cannot directly talk about inverting long element skeins.  In the above definition we bypass this by focusing on the annihilator ideal for the boundary action. However, in Section \ref{sec:quantum-A-polynomial} (and briefly detailed in the next subsection), we address the issue head-on by first removing a tubular neighbourhood of each long edge, and working with the internal skein algebra of the resulting surface $\Sigma_\triangle$, in which the long edge skeins are algebra elements and can be inverted formally.

In Section \ref{sec:quantum-A-polynomial}, we give an alternative presentation of $\Sigma_\triangle$ as glued from several four-punctured spheres.  In this formulation, $\mathcal{I}_{tet}$ is the left ideal generated by one Kauffman-type relation filling in the interior of each punctured sphere to give an ideal tetrahedron.  The two perspectives on $\Sigma_\triangle$ are indicated graphically in Figure \ref{fig:the-big-surface}.

\subsection{Main result}

Our main result is the explicit determination of the localised quantum $A$-ideal $\Iloc$ associated to any ideal triangulation of the knot complement $\knotcomp$.  Let $\triangle$ be an ideal triangulation involving $n$ tetrahedra.  A standard argument with Euler characteristics implies that $\triangle$ also has exactly $n$ edges.
In Section \ref{sec:localised-quantum-A-ideal}, we define a quantum torus $\Winv$ extending $\C_q[M,L]$ by $n-1$ edge-ratio generators $Y_1,\ldots Y_{n-1}$, together with $2n$ $T$-monodromy generators $r_1,\ldots r_{2n}$.
The quantum torus $\Winv$ arises as the $T$-invariant subalgebra of a localisation of the internal skein algebra of $\Sigma_\triangle$.
We define a left ideal $I_{bulk}$ and a right ideal $I_{thr}$ of $\Winv$.
Roughly speaking, $I_{bulk}$ fills in the interior of each tetrahedron via Kauffman type skein relations, while $I_{thr}$ fills in the tubular neighbourhood of the edges of the triangulation, and gets its name from the ``thread monodromies'' that generate it, see Lemma \ref{lemma:thread-monodromies}.
We have the following:

\begin{theorem}[Theorem \ref{thm:its-the-quantum-A-polynomial}]\label{them:main-result-intro}
We have an equality of left ideals,
\[
\Iloc = \C_q[M^{\pm1},L^{\pm1}] \cap \left(\mathcal{I}_{thr} + \mathcal{I}_{bulk}\right),
\]
between the quantum $A$-ideal and the joint elimination ideal of the tetrahedron relations and thread relations.  
\end{theorem}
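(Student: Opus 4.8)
The plan is to establish the equality of left ideals by proving the two inclusions separately, passing through the internal skein algebra $\SkAlg^{int}(\Sigma_\triangle)$ as the common intermediary. Recall from the definitions that $\Iloc$ is the sum over all simple labels $V(m)$ of the annihilator ideals of the long-edge skeins, where each long-edge skein is an element of the internal skein module $\widehat{\SkMod^{int}}(M^\circ)$ (localised at $x$, with $y=0$); whereas the right-hand side is computed inside the quantum torus $\Winv$, which by the previous paragraph of the excerpt is the $T$-invariant subalgebra of a localisation of $\SkAlg^{int}(\Sigma_\triangle)$. So the first step is to set up a precise dictionary: realise $\widehat{\SkMod^{int}}(M^\circ)$ as a module over $\Winv$ by re-gluing $\Sigma_\triangle$ into $M^\circ$ along the tubular neighbourhoods of the long edges and the interiors of the tetrahedra, and identify the action of $\C_q[M^{\pm1},L^{\pm1}] \subset \Winv$ with the boundary-$T^2$ action defining $\Iloc$. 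This re-gluing is exactly the content sketched around Figure~\ref{fig:the-big-surface}, and I would make it rigorous using the excision/gluing property of (internal) skein modules guaranteed by their identification with factorization homology (the results of \cite{Cooke2019, Brown_Haioun_2024, BZFN} invoked earlier).

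For the inclusion $\Iloc \supseteq \C_q[M^{\pm1},L^{\pm1}] \cap (\mathcal{I}_{thr} + \mathcal{I}_{bulk})$: an element $P \in \C_q[M^{\pm1},L^{\pm1}]$ lying in $\mathcal{I}_{thr} + \mathcal{I}_{bulk}$ acts as zero on the cyclic $\Winv$-module $\Winv / (\mathcal{I}_{thr} + \mathcal{I}_{bulk})$, and I would show this module is precisely (a localisation of) $\widehat{\SkMod^{int}}(M^\circ)$ with its long-edge skein distinguished vector — $I_{bulk}$ kills the interior of each tetrahedron, implementing the Kauffman relations that fill in the truncated tetrahedra, and $I_{thr}$ implements the thread monodromies of Lemma~\ref{lemma:thread-monodromies}, which is exactly the relation that fills in the tubular neighbourhood of each edge. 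Hence $P$ annihilates the long-edge skein for the defining (vector) label, and by the fusion/cabling relations in $\Rep_q\SL_2$ — every simple $V(m)$ is a summand of a tensor power of the vector representation — it annihilates the long-edge skein for every label, so $P \in \Iloc$. For the reverse inclusion $\Iloc \subseteq \C_q[M^{\pm1},L^{\pm1}] \cap (\mathcal{I}_{thr} + \mathcal{I}_{bulk})$: given $P$ annihilating all long-edge skeins, I would run the dictionary backwards — $P$ annihilates the image of the long-edge skein in $\Winv/(\mathcal{I}_{thr}+\mathcal{I}_{bulk})$, and because that module is generated over $\C_q[M^{\pm1},L^{\pm1}]$ by the long-edge skeins (this is the geometric input: $\Sigma_\triangle$ deformation-retracts onto a neighbourhood of the long edges together with the boundary), $P$ must lie in the ideal, and it lies in $\C_q[M^{\pm1},L^{\pm1}]$ by hypothesis.

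The main obstacle, I expect, is the faithful-flatness / no-extra-relations bookkeeping in the re-gluing step: one must check that passing from $\SkAlg^{int}(\Sigma_\triangle)$ (where long-edge skeins are honest, invertible algebra elements) back to the 3-manifold $M^\circ$ introduces exactly the relations $\mathcal{I}_{thr}$ and $\mathcal{I}_{bulk}$ and no others — i.e. that the natural surjection from the localised internal skein algebra to the internal skein module of the filled-in manifold has kernel generated by precisely these two ideals. This requires the gluing formula for skein modules to be an isomorphism rather than merely a surjection, which is where the cp-rigidity and $2$-dualizability of $\widetilde{\Rep_q}B$ and of $\Rep_q\SL_2$, $\Rep_q T$ (cited in the excerpt) do the real work, together with a careful Mayer–Vietoris-style decomposition of the truncated-tetrahedron pieces. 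A secondary subtlety is keeping the left-module versus right-module structures straight — $\mathcal{I}_{bulk}$ is a left ideal and $\mathcal{I}_{thr}$ a right ideal of the noncommutative $\Winv$, so one should verify that their sum is the correct object whose ``joint elimination'' intersects back to a genuine left ideal of $\C_q[M^{\pm1},L^{\pm1}]$; this is a matter of tracking orientations of defect strands and the resulting handedness of the parabolic induction/restriction coevaluations, but it must be done consistently.
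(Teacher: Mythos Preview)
Your proposal follows the same architectural arc as the paper --- use excision to identify the localised skein module with a cyclic quotient of $\Winv$, then read off the annihilator --- and the paper's own ``proof'' is a single sentence (``follows by construction and from excision of internal skein modules'') deferring everything to the constructions of Sections~5.1--5.2. So at the level of strategy you are correct.

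However, your two-inclusion argument is more complicated than necessary and contains a genuine gap. In the paper's setup the distinguished generator of $\SkMod^{loc}(\knotcomp)$ is the \emph{empty} skein, not a long-edge skein; the point of passing to the localisation is exactly that each long-edge skein becomes a unit times the empty skein, so the sum-of-annihilators definition of $\Iloc$ collapses to the single annihilator of $\varnothing$ in the localised module. Once excision gives the cyclic presentation $\SkMod^{loc}(\knotcomp)\cong \Winv/(\mathcal{I}_{thr}+\mathcal{I}_{bulk})$ with $\varnothing\mapsto 1$, the equality $\Iloc = \C_q[M^{\pm1},L^{\pm1}]\cap(\mathcal{I}_{thr}+\mathcal{I}_{bulk})$ is immediate: $P\cdot 1=0$ in the quotient iff $P\in\mathcal{I}_{thr}+\mathcal{I}_{bulk}$. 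There is no separate reverse inclusion to run.

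Your reverse-inclusion step is where this matters. The claim that $\Winv/(\mathcal{I}_{thr}+\mathcal{I}_{bulk})$ is ``generated over $\C_q[M^{\pm1},L^{\pm1}]$ by the long-edge skeins'' is not justified by the deformation-retraction you invoke, and indeed the long-edge skeins themselves are not $T$-invariant and do not lie in $\Winv$ --- only their ratios (the skeletal variables $Y_i$) do. What you would need is generation over $\C_q[M^{\pm1},L^{\pm1}]$ by monomials in the $Y_i$ and $r_j$, which is a different statement and in any case unnecessary once you use the empty skein as cyclic vector. Your flagged concern about the kernel of the excision map being \emph{exactly} $\mathcal{I}_{thr}+\mathcal{I}_{bulk}$ is the real content (this is what Theorem~\ref{thm:cluster-chart-for-sigma-tri} and Lemma~\ref{lemma:ideal-tet-presentation} establish piece by piece), and your remark about tracking the left/right handedness of $\mathcal{I}_{bulk}$ versus $\mathcal{I}_{thr}$ is well-taken.
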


\begin{remark} In more elementary terms, the intersection and reduction in Theorem 1.6 simply amounts to ``eliminating additional variables".  There is a complication, apparently a very interesting one, which is that we are eliminating modulo a sum of a left and a right ideal.  We note that the polynomial $\hat{A}_K$ can be computed in finite time using standard noncommutative Groebner basis methods.
\end{remark}

Our proof of Theorem \ref{them:main-result-intro} proceeds by constructing an intermediate 3-manifold tuned to $\triangle$: we remove from $S^3$ not only the tubular neighbourhood of the knot $K$ but also the tubular neighbourhood of the skeleton of the triangulation: now instead of a $T$-labelled torus, we have a $T$- and $G$-labelled surface $\Sigma_\triangle$ of genus $n+1$.  We form a composition of cobordisms,
\[
\knotcomp =  \underbrace{(D^2\times I)^{\sqcup 2n}_{T}}_{M_{thr}} \bigcup_{\mathrm{Ann}^{\sqcup 2n}} \underbrace{\left(\Sigma_\triangle\times I\right) \,\,\bigcup_{\Sigma_\triangle} \,\,(M_{tet}^{\sqcup n}/\varphi_{\triangle}^{})}_{\Mbulk}.
\]
Where the gluing map $\varphi_{\triangle} : \DD_3^{\sqcup 2n} \to \DD_3^{\sqcup 2n}$ dictates which faces $\DD_3 \subset M_{tet}$ are identified in the triangulation.

Applying the defect skein TFT, we have that \begin{equation}\label{eq:excision}
\begin{aligned}
    \SkMod(\knotcomp) &\cong \SkMod(M_{thr}) \bigotimes_{\SkCat(\mathrm{Ann}^{\sqcup 2n})}\SkMod \left(\Sigma_\triangle\times I\right) \bigotimes_{\SkCat(\Sigma_\triangle)} \SkMod(\Mbulk).
\end{aligned}
\end{equation}

The skeins running parallel to the long edges of the triangulation define an Ore set in the internal skein algebra $\SkAlg^{int}_T(\Sigma_\triangle)$. 
We define $\Winv$ to be the localisation at this Ore set. 
We define
\begin{equation}\label{eq:excision-loc}
\begin{aligned}
    \SkMod^{loc}(\knotcomp) &:= \SkMod(M_{thr}) \bigotimes_{\WW_{thr}}\Winv \bigotimes_{\Winv} \SkMod(\Mbulk).
\end{aligned}
\end{equation}
Here $\WW_{thr} := \C[r_1^{\pm 1},\ldots,r_{2n}^{\pm 1}]\subseteq \Winv$.

By construction we have a canonical homomorphism of $\C_q[M,L]$-modules $\SkMod(\knotcomp)\to \SkMod^{loc}(\knotcomp)$, and we identify the ideal $\mathcal{I}_\triangle(K)$ with the annihilator of the empty skein in $\SkMod^{loc}(\knotcomp)$.

\begin{figure}
    \centering
    \includegraphics[width=0.9\linewidth]{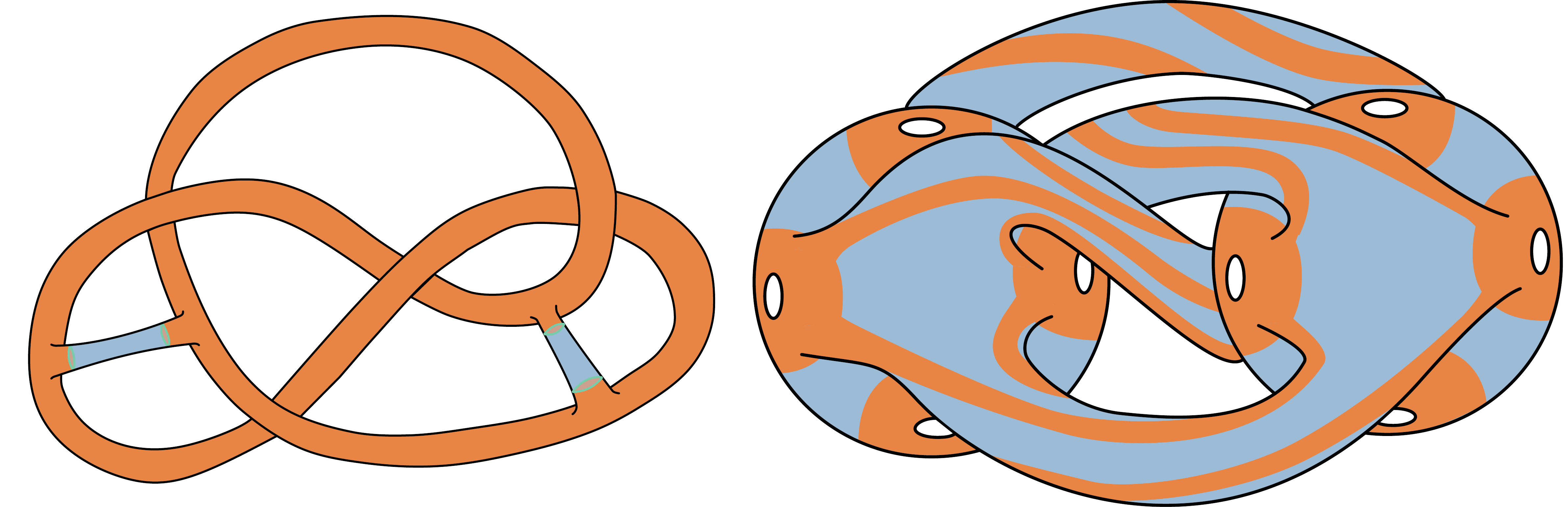}
    \caption{Two depictions of the same decorated surface $\Sigma_{\triangle}$ associated to the figure-eight knot and an ideal triangulation. The surface $\Sigma_\triangle$ consists of a $T$-region torus, onto which one $G$-region handle is attached for each long edge of the triangulation.  On the \textbf{left}, $\Sigma_\triangle$ is drawn emphasizing the shape of the knot as it sits in $S^3$.
    On the \textbf{right}, the same $\Sigma_\triangle$ is drawn to emphasize the role of the ideal triangulation. The eight punctures indicate the location of the tetrahedra's vertices, while the connecting cylinders show identified faces. }
    \label{fig:the-big-surface}
\end{figure}

 \begin{figure}
     \centering
     \includegraphics{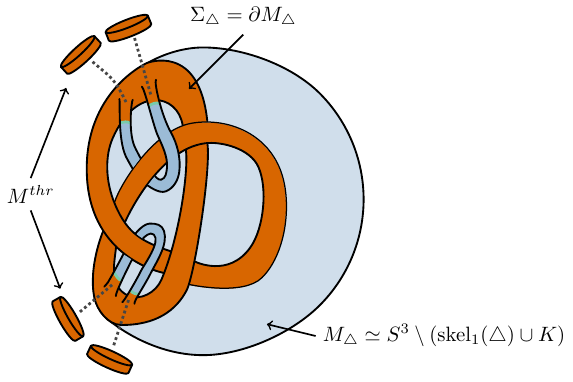}
     \caption{Attaching $T$-coloured disks to the boundary of $M^{bulk}$.}
     \label{fig:knot-complement-decomposition}
 \end{figure}

\subsection{Examples}
We close the introduction with a few illustrative examples.  More computations are done in Section \ref{sec:worked-examples}, and we have developed code in Sage and Singular which automates the process \cite{OurCode}.
Running our program with $q^{1/2} = -1$ produces the classical non-Abelian A-ideal. 
In the quantum version, the order in which we specialise thread variables and eliminate skeletal variables matters because the relevant variables do not commute. This is the source of the extra factors we see below in our computation for the trefoil knot.

In both the quantum and classical setting, the computationally expensive step of comes at the very end, in the form of non-commutative variable elimination.
Our limiting factor is the number of tetrahedron involved, which correlates roughly to the minimum number of crossings in the knot diagram.

In the current version of the code, the ideals $\mathcal{I}_{bulk}$ and $\mathcal{I}_{thr}$ are computed matter of minutes on a non-specialized computer for triangulations with as many as 11 tetrahedra. 
Performance improvements and large-scale computations will be the focus of future updates.

\begin{example}[The trefoil]
In Section \ref{sec:worked-examples} we compute that $\WW^{inv}_{\triangle}$ is a quantum torus with generators $L,M,Y,r$ subject to commutation relations:
\begin{equation}\begin{aligned}
  L  M = q  M  L,\qquad
 Y  L = q^{-1}  L  Y,\qquad
 Y  r = q  r  Y,
 \end{aligned}
 \end{equation}
The thread ideal $\mathcal{I}_{thr}$ is the right ideal generated by $(r-1)$.  The bulk ideal $I_{bulk}$ is the left ideal generated by
\begin{equation}
  \begin{aligned}
    B_1 &= q^{5} M^{-4} L r^{-1} + q^{7} r^{-1} Y + q^{3/2} \\
    B_2 &= q^{-9/2} M^6 L^{-1} Y^{-1} + q^{-5} M^2 r Y^{-1} + q^{3/2}
  \end{aligned}
\end{equation}
Eliminating $Y$ and $r$ is a straightforward computation yielding $\mathcal{I}_\triangle(3_1) = \langle\hat{A}_{3_1} \rangle$, where

\begin{equation}
    \hat{A}_{3_1} = q^7M^{10} + q^{15/2}M^6L -q^{11/2}M^4L - L^2 = (q^{11/2}M^4+L)(q^{3/2}M^6-L).
\end{equation}
\end{example}
\begin{remark}The factor $(q^{11/2}M^4+L)$ is purely quantum: if we set $q^{1/2}=-1$, then $\mathcal{I}_{tet}$ is a two-sided ideal, giving us more elimination moves because we can swap the order of the two factors of $\hat{A}_{3_1}$, and we recover the ideal $M^6+L$ --- the classical $A$-polynomial, less its Abelian factor $(L-1)$.
\end{remark}

\begin{example}[The figure-eight]
As an illustration of triangulation dependence, we consider in Section \ref{sec:worked-examples} two triangulations of $S^3\backslash \overline{4_1}$ related by a 2-3 Pachner move.  The first triangulation $\triangle_1$ features two tetrahedra, the second $\triangle_2$ features three tetrahedra.

In the smaller triangulation, the left ideal $I_{bulk}$ is generated by
 \begin{equation}\begin{aligned}
   B_1 =  q^{23/2} M^{-2} L^{-1} r^{-3} Y^{-1}+ q M^2 r Y  + q^{3/2},\hspace{1cm}&&
   B_2 = q^{1/2} L r Y + q r^{-1} Y^{-1} + q^{3/2}
 \end{aligned}\end{equation}
Eliminating the skeletal variable $Y$ and then specializing the thread monodromy $r$, we arrive at a non-principal ideal generated by
\begin{equation}
    \begin{aligned}
       g_1 &= q^{22}  M^8  L^3 -q^{17}  M^8  L -q^{16}  M^6  L^3 + (q^{37/2} - q^{29/2})  M^6  L^2 \\
       &+ q^{15/2} M^4  L^4 + q^{15}  M^6  L -(q^{12} + q^{10})  M^4  L^3 + (q^{17} + q^{15})  M^4  L \\
       &-q^{4}  M^2  L^3 + -q^{35/2}  M^4 + (-q^{25/2} + q^{17/2})  M^2  L^2 + q^{15}  M^2  L + L^3 -q^{15}  L\\
       \\
        g_2 &= q^{25/2}  M^{10}  L  -q^{28/2}  M^8  L^2  -q^{21/2}  M^8  L + (q^{10} + q^{6})  M^6  L^2 + (-q^{25/2} - q^{21/2})  M^6  L \\
        &-q^{7/2}  M^4  L^3 + q^{13} M^6 + (q^{8} + q^{6})  M^4  L^2 + (-q^{21/2} - q^{13/2})  M^4  L \\
        &+ q^{2}  M^2  L^2 + q^{21/2}  M^2  L - L^2.
    \end{aligned}
\end{equation}

Compare these with the classical $A$-polynomial $M^4+ L(-1 + M^2 + 2M^4 + M^6 - M^8)+ L^2M^4$.

\end{example}

\begin{example}[The unknot]
Already the trivial example of the unknot is illustrative: we see that when divisors arise with multiplicity in the $A$-ideal, these may need to be factored into the quantization.  We also see the detection of the Abelian component when computing with parabolic skeins.

With our conventions for colouring, the 3-manifold $S^3\backslash\overline{\mathrm{unknot}}$ is a solid torus, coloured with $G$ in its bulk region, and with $T$ on the boundary $T^2$, with a $B$-defect in the middle.  Let us consider a fibre of the standard fibration over the circle -- a disk with contractible $G$ bulk region, and an annular $T$-region at the boundary.

Recall that $\pi_1(S^3\backslash \mathrm{unknot})$ is simply $\mathbb{Z}$, generated by the meridian.  In particular, we have that $L=1$.  We will now proceed to show that $\mathcal{I}(\mathrm{unknot})=\langle(L-1)^2\rangle$, when $q=1$, and that for more general $q$ we have $\mathcal{I}(\mathrm{unknot})=\langle(L-q)(L-1/q)\rangle$.

\begin{figure}
    \centering
    \includegraphics{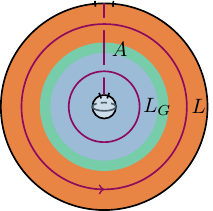}
    \caption{The skeins $A, L, L_G$ used in the computation of the quantum A-polynomial for the unknot, shown here in a cross section of the solid bipartite torus. Note that the Kauffman relations in the $G$ region imply that $L_GA = (L+L^{-1})A$.}
    \label{fig:unknot-example}
\end{figure}
The longitude in the $G$ region $L_G$ equals $q + q^{-1}$, we can write it in terms of the $T$-region longitude $L$ as follows.
We excise a small ball on the bulk, allowing ourselves to open a $G$-gate and a $T$-gate. Let $A$ denote the skein passing between these gates, labelled by the fundamental representation in the $G$ region and the one-dimensional weight +1 representation in the $T$ region, as shown in Figure \ref{fig:unknot-example}.
Then applying the crossing relation Figure \ref{fig:kauffman-crossings} then an isotopy, we get that $ L_GA = (L+L^{-1})A = (q+q^{-1})A$.
Unlike $L$, the skein $A$ is not naturally invertible. We get that $L+L^{-1} = q + q^{-1/2}$ in the localisation of the skein module of the unknot-complement. 
Multiplying through by $L$, we get $(L-q)(L-q^{-1}) = 0$.

\end{example}

\subsection{Relation to other works}\label{sec:related}
Our work is of course closely related to \cite{Dimofte2011,DGG2016,Garoufalidis_2004, Dimofte_Garoufalidis_2013}; indeed, recovering and refining their results was a major motivation.  We note that the surface $\Sigma_\triangle$ is reminiscent of the approach to abelianisation by adding additional handles to the boundary, which appeared in \cite{dimofte2014spectral}.

We draw considerable inspiration from the works \cite{gaiotto2013spectral,Hollands_Neitzke_2016,Gabella_2017,Neitzke_Yan_2020} and many others pertaining to abelianisation and non-abelianisation of skein algebras and skein modules using ideas from physics.  We hope to explore precise relations/applications of our work to abelianisation and spectral networks in a future work.

On the strictly mathematical side, we also mention the recent works \cite{Panitch_Park_2024, Garoufalidis_Yu_2024} which appeared during the preparation of the present manuscript appear to be closely related to this work.  It would be very interesting to understand precise relations between our work and theirs.

We also remark that at $q=1$ (more precisely at $q^{1/2}=-1$) we expect our localised quantum $A$-polynomial to be closely related -- perhaps equal to, up to normalisation -- that obtained in \cite{champanerkar2003polynomial, zickert2016ptolemy,HMP}, using considerations in hyperbolic geometry.

Beyond the relations to the $A$-polynomial, the idea to study skein modules with defects is certainly implicit in many works about stated skeins (e.g. \cite{Costantino_Le_2022,Mandel_Qin_2023}, indeed going back to \cite{Muller_2016} and the works \cite{Bonahon_Wong_2011a,Bonahon_Wong_2011b,Bonahon_Wong_2016,Bonahon_Wong_2017} of Bonahon--Wong on quantum trace).
However, to our knowledge these structures were not developed to the extent and detail that they are the present work.
In particular, the appearance of non-trivial topology in the $T$-region appears to be a new feature of our work, whereas a $T$-gate in a contractible $T$ region can be understood via the reduced stated skein algebras.

Codimension-two defects labelled by braided module categories have been proposed in \cite{jordan2022langlands}.

\subsection{Outline of paper}
In Section \ref{sec:defects-in-skein-theory} we introduce codimension-one defects in skein theory. Section \ref{sec:stratified-spaces} introduces the bipartite surfaces and three manifolds encountered in this work, while Section \ref{sec:local-coeffs} describes the algebraic data needed to define a defect skein theory and the resulting defect skein relations.

Defect skein categories and modules are introduced in Section \ref{sec:defect-skein-category}, then a brief review of some relevant categorical constructions in Section \ref{sec:various-cats} allows us to formulate the important gluing properties of skein categories and modules in Section \ref{sec:gluing-skein-cats}.
This is followed by the introduction of defect internal skein modules and algebras in \ref{sec:internal-constructions}, with some discussion changing gates and on their geometric interpretation.

Section \ref{sec:parabolic-defect} focuses on the parabolic defect powering our A-polynomial work. It opens with a description of the local coefficients associated to $\Rep_qB$ then gives a Kauffman-\"uller style presentation of parabolic skeins for $G=\SL_2\CC$.
Monadic reconstruction for the non-affine defect $\Rep_qB$ is described in Section \ref{sec:monadic-reconstruction}, and Section \ref{sec:repqB-renormalisation} describes the affinisation procedure discussed above.
Section \ref{sec:quantum-tori} introduces quantum tori, quantum cluster charts, and a useful Lemma for taking certain quotients in an efficient manner.
This part concludes with Section \ref{sec:building-blocks}, where the defect internal skein algebras of four basic bipartite surfaces are given presentations. 

Section \ref{sec:quantum-A-polynomial} focuses on the computation of the quantum A-polynomial which motivated the current work.
In Section \ref{sec:quantum-cluster-charts} we construct quantum cluster charts for $\SkAlg^{int}(\Sigma_\triangle)$ and $\SkMod^{int}(\Mbulk)$, then in Section \ref{sec:localised-quantum-A-ideal} we obtain the localised quantum $A$-ideal by restricting to an invariant sub-algebra and passing to a certain quotient.
Finally, Section \ref{sec:worked-examples} works through the computations in concrete detail for a handful of knots.

\subsection{Acknowledgements}
We are grateful to David Ben-Zvi, Tudor Dimofte, Sam Gunningham, Andy Neitzke, Sunghyuk Park, Pavel Safronov, Gus Schrader, Christoph Schweigert, and Sasha Shapiro for helpful discussions about this work.

The work of JB was partially funded by NSF grants DMS-2202753 and DMS-1753077. The work of DJ was supported by the EPSRC Open Fellowship ``Complex Quantum Topology", grant number EP/Y008812/1.  The work of both authors was supported by the Simons
Foundation award 888988 as part of the Simons Collaboration on Global Categorical Symmetry.

We are grateful to the Banff International Research Station, the International Centre for Mathematical Sciences, Beijing Institute of Mathematical Sciences and Applications, and Simons-Laufer Mathematical Sciences Institute, where the authors presented this work in 2023 and 2024, and to the Atlantic Algebra Centre at Memorial University and the SwissMap Research Station where JB has given lecture series about this work in 2024 and 2025.

\section{Defect skein theory}\label{sec:defects-in-skein-theory}

A central part of this work is to define skein theory of a bipartite surfaces and three-manifolds, as a (3,2)-TFT with defects, building on the defect-free case as developed in \cite{Walker_2006} and \cite{Johnson-Freyd_2021}, following \cite{RT1990}.

\subsection{Stratified spaces}\label{sec:stratified-spaces}
We follow the conventions for stratified spaces used in \cite[\S 2]{AFT2017local}.  A \emph{stratified space} is a paracompact Hausdorff space $X$ together with a continuous map $\varphi: X \to P$ to a poset $P$, where $P$ is endowed with a topology whose closed sets are generated by $P_{\leq p}$ for $p \in P$.
We use the notation $X_p := \varphi^{-1}(p)$ for the individual \emph{strata}.
\begin{remark}
    When $\dim (X_p) < \dim(X)$, $X_p$ is often referred to as a ``defect of codimension $\dim(X)-\dim (X_p)$", as a ``surface defect" (when $\dim(X_p)=2$) or ``line defect" (when $\dim(X_p)=1$) or ``point defect" (when $\dim(X_p)=0)$.  When $\dim(X)-\dim(X_p)=1$, $X_p$ is sometimes called an ``interface" or ``domain wall".
\end{remark}

A map between stratified spaces $X\to P$ and $Y\to Q$ is a pair $F_1 : X \to Y$ and $F_2 :P \to Q$ of continuous maps such that the following square commutes:
\begin{equation}
  \begin{tikzcd}
    X \ar[r,"F_1"] \ar[d] & Y \ar[d]  \\
    P \ar[r,"F_2"]        & Q
  \end{tikzcd}
\end{equation}
We call such a map an embedding if both $F_1$ and each $F_1|_{p}^{} : X_p \to Y_{F_2(p)}$ is an embedding.
An isotopy between two embeddings $F,G$ of $X \to P$ into $Y \to Q$ is a pair of continuous maps $H_1, H_2$ such that
\begin{equation}
  \begin{tikzcd}
    X \times [0,1] \ar[r,"H_1"] \ar[d] & Y \ar[d] \\
    P \ar[r,"H_2"] & Q
  \end{tikzcd}
\end{equation}
where $X\times [0,1] \to P$ depends only on the first factor and $H_1(\cdot,0) = F_1$, $H_1(\cdot,1) = G_1$.
In particular, this implies that $H_2 = F_2 = G_2$.  We say that $F$ and $G$ are isotopic if there exists an isotopy between them.

In this paper we will consider surfaces and 3-manifolds stratified over the 3-element poset on letters $A,B,C$ with $A \leq B \geq C$ (so that $A$ and $C$ are not comparable in the poset).  We are moreover in the favorable condition that the strata $X_A,X_B,X_C$ are each smooth submanifolds of co-dimension $0$, $1$, $0$, respectively.

We often denote such spaces as unions of such regions glued along defects, i.e. $X = X_a \cup_{X_b} X_c$ is shorthand for the stratified space $X \overset{p}{\to} \{B \leq A, B \leq C\}$ with $p^{-1}(A) = X_a, p^{-1}(C) = X_{c}$ both codimension zero and $p^{-1}(B) = X_b$ a (possibly disconnected) embedded surface.  Such a defect $X_b$ is sometimes called an \emph{interface} or \emph{domain wall} in the physics literature.

\begin{figure}
    \centering
    \includegraphics[]{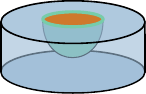}
    \caption{A thickened disk with the structure of a bipartite three manifold. Note that the defect (teal) meets the upper boundary transversely.}
    \label{fig:bipartite}
\end{figure}

\begin{definition}\label{def:bipartite-3-manifold} A \emph{\bf bipartite 3-manifold} consists of the data of a 3-manifold $M$, with a continuous map $\phi:M\to P=(A \leq B \geq C)$, with the property that $M_B := \phi^{-1}(B)$ is a (possibly disconnected) embedded oriented surface, whose outward normal always points towards an $A$ region and inward normal towards a $C$ region. We allow 3-manifolds with boundary, and allow the $M_B$-strata to meet the boundary transversely.
\end{definition}

\begin{definition} A \emph{\bf bipartite surface} consists of the data of a surface $\Sigma$, with a continuous map to $\phi:\Sigma\to P=(A \leq B \geq C)$, with the property that $\Sigma_B$ is a (possibly disconnected) smoothly embedded curve whose outward normal always points towards an $A$ region and inward normal towards a $C$ region. We allow surfaces to have boundary, and allow the $\Sigma_B$-strata to meet the boundary transversely.
\end{definition}

\begin{definition}\label{def:bipartite-graph} An \emph{\bf an embedded bipartite ribbon graph} in a bipartite 3-manifold $M$ consists of a graph $\Gamma$, together with:
\begin{itemize}
    \item A stratified embedding $\iota:(\Gamma,P_{graph}) \hookrightarrow (M,P)$, where $\Gamma \to P_{graph}$ is the standard graph stratification $\Gamma$ for which $v\leq e$ whenever $v$ is an end vertex of an edge $e$.
    \item A stratified framing on $\iota(\Gamma) \subset M$,
    i.e. a choice of non-vanishing section of the restricted normal bundle $N\iota(e)\cap T M_p$ along each edge $e$ of $\Gamma$, where $M_p$ is the strata containing the edge $e$.
    \item A cyclic ordering on the half-edges adjacent to each vertex.
\end{itemize}

\end{definition}
\begin{remark}
    Note that the data of the stratified embedding $\iota$ includes a labelling $\Gamma \to \{ A \geq B \leq C\}$ such that if an edge $e$ is $B$-labelled then so are its vertices, but $A$-labelled (resp. $C$-labelled) edges can have vertices in either $B$ or $A$ (resp. $C$).
\end{remark}

\subsection{Local coefficients}\label{sec:local-coeffs}
  
  Our constructions will involve a pair of ribbon tensor categories $\cA$ and $\cC$ assigned to the open $A$- and $C$- regions, respectively, together with the data of a pivotal tensor category $\cB$ assigned to the defect.  There is a further necessary structure relating $\cA$, $\cB$ and $\cC$, captured in the following definition.

    \begin{definition}
    Fix ribbon tensor categories $\cA$ and $\cC$.  A pivotal $(\cA,\cC)$-central tensor category is a pivotal tensor category $\cB$ together with a pivotal braided tensor functor $(H,J,c):\cA\boxtimes \overline{\cC}\to\mathcal{Z}(B)$, where $H : \cA \boxtimes \cC \to \cB$ is the underlying functor, $J : H(-\otimes -) \to H(-)\otimes H(-)$ is the tensor structure, and $c_{x,y,z} : H(x\boxtimes y) \otimes z \to z \boxtimes H(x\boxtimes y)$ is the half-braiding.
    \end{definition}
    Here $\mathcal{Z}(\cB)$ denotes the Drinfeld center of $\cB$ equipped with its canonical pivotal braided tensor structure, while $\overline{\cC}$ denotes the tensor category $\cC$ equipped with the opposite braiding and inverse ribbon structure.
We will abbreviate simply by $\cB$ the data
\begin{equation}\label{eq:ABC-def}
  \cB := \big(\cA, \cB,\cC, (H,J,c) : \cA\boxtimes \overline{\cC} \to Z(\cB)\big).
\end{equation}
Following \cite[Figure 2]{BJS2021} and \cite{FSV2013}, we will use the central structure to determine how skeins are `pushed' from the bulk into the defect.
We package this information as a functor analogous to the aforementioned braided tensor functors $\RT_\cA : \Rib_\cA \to \cA$ and $\RT_\cC : \Rib_\cC \to \cC$ introduced in \cite{RT1990}.

When $\cA = \cC = \Vect$ and the central structure is given by $V\boxtimes W \mapsto (V\otimes W)\otimes \idty_\cB$, the defect theory will reduce to the two-dimensional skein theory of the pivotal tensor category $\cB$.
We therefore start by reviewing the two-dimensional theory, following the approach laid out in \cite[\S 3.1]{GPV2012}.

The algebraic input is in general a pivotal cp-rigid tensor category $\cB$, which is not necessarily braided.  Recall that cp-rigid means that the compact-projective objects of $\cB$ are left- and right-dualizable, and that we have fixed a monoidal isomorphism between the functors of taking left and right double duals.

\begin{definition}\label{def:planar-graph-cat}
  Let $\cB$ be a pivotal category. $\Planar_\cB$ is the monoidal category of planar graphs colored by $\cB$. It has:
  \begin{description}
    \item \emph{\underline{objects}}
      Finite collections $\{(x_i,V_i,\epsilon_i)\}$ of signed (denoted $\epsilon_i\in\{\pm 1\}$), framed points (denoted $x_i\in I$), each one labelled with an object $V_i$ of $\cB$.
    \item \emph{\underline{morphisms:}}
      A morphism from $\{(x_i,V_i,\epsilon_i)\}$ to $\{(y_j,W_j,\epsilon_j)\}$ is an oriented planar graph in $\mathbb{R}\times[0,1]$,
      with edges colored by objects in $\cB$ and coupons (i.e. vertices) labeled by morphisms from the tensor product of incoming edges to that of outgoing edges.
      The orientation and color of edges at the boundary $\mathbb{R} \times \{0,1\}$ must agree with the colors and signs of the source and target objects.
      These graphs are considered up to isotopy.

      We will depict morphisms as going upward, from $\mathbb{R}\times \{0\}$ to $\mathbb{R} \times \{1\}$.
      Composition is given by vertical stacking followed by a rescaling $\mathbb{R} \times [0,2] \overset{\cong}{\to} \mathbb{R} \times [0,1]$.
    \item \emph{\underline{product:}}
      The monoidal structure is given by disjoint union, so that $T_1 \otimes T_2$ is $T_1$ placed to the left of $T_2$.   The empty graph is the unit.

    \item \emph{\underline{duality:}}
      The dual of $\eta = ((V_1, \varepsilon_1),\ldots,(V_n,\varepsilon_n))$, is $\eta^* := ((V_n, -\varepsilon_n),\ldots,(V_1,-\varepsilon_1))$.
      Coevaluation and evaluation are given by oriented cups and caps.
      We will draw the pivotal isomorphism $(V,+)^{**} = (V,-)^* = (V,+)$ as the identity.
  \end{description}
\end{definition}
\begin{remark}
This is essentially a version of $\Rib_\cA$ (called $\mathrm{HCDR}(\cA)$ in \cite[\S 4.5]{RT1990}) without the extra dimension necessary for braidings or differences in framing to occur.
\end{remark}

In \ref{def:defect-RT} we will use the planar diagrammatics of pivotal categories \cite{Selinger2011} to define skein relations near a planar defect.

    \begin{definition}\label{def:rel-defect-labels}
    A \emph{\bf$\cB$-labelling} of a bipartite surface is a finite collection $\{(x_i,V_i,\epsilon_i)\}$ of signed (denoted $\epsilon_i\in\{\pm 1\}$), framed points $x_i$, each one labelled with an object $V_i$, of $\cA$, $\cB$, or $\cC$, according to which region the point $x_i$ occupies. 
    \end{definition}

    \begin{definition}\label{def:b-labelling}
    A \emph{\bf $\cB$-coloring} on an embedded bipartite ribbon graph (definition \ref{def:bipartite-graph}) is an assignment of an object $V_e$ of $\cA$, $\cB$, or $\cC$ for each edge $e \in E(\Gamma)$, always chosen from the category associated to $e$, and a morphism $f_v$ for each vertex $v \in V(\Gamma)\}$. The source and target for $f_v$ is determined as follows.

    For $v \notin M_B$ not in the defect, we let $h(v)$ denote the cyclically ordered set of half edges incident to $v$, and we require a morphism
    \begin{equation*}
        f_v : \bigotimes_{e \in h(v)}V_e^{\epsilon_e} \to \idty
    \end{equation*}
    (in $\cA$ or $\cC$ according to the region) where $V_e^{\epsilon_e}  = V_e$ for incoming half-edges and $V_e^*$ for outgoing half-edges, and $\idty$ is the tensor unit in $\cA$ or $\cC$, according to the region.
    
    For $v \in M_B$ we let $h_A(v)$ denote the half-edges in $M_A$, $h_B(v)$ those in $M_B$, and $h_C(v)$ those in $M_C$, and we require:
    \begin{equation*}
        f_v : H\left( \bigotimes_{e \in h_A(v)}\hspace{-1ex} V_e^{\epsilon_e} \boxtimes \bigotimes_{e \in h_C(v)}\hspace{-1ex} V_e^{\epsilon_e}\right)\otimes \bigotimes_{e \in h_B(v)}\hspace{-1ex}V_e^{\epsilon_e} \to \idty_\cB.
    \end{equation*}
    \end{definition}

Here $H : \cA\boxtimes \cC \to \cB$ is the tensor functor of Definition \ref{eq:ABC-def}.

  \begin{definition}\label{def:defect-ribbons}
    The  category $\Rib_\cB$ of $\cB$-coloured embedded bipartite ribbon graphs \emph{\bf near the defect} has:
    \begin{description}
      \item \emph{\underline{Objects}:} 
    $\cB$-labelings of $\DD_B$.
        See Figure \ref{fig:RT-B-on-objects}.

      \item\emph{\underline{Morphisms:}} $\cB$-coloured bipartite ribbon graphs embedded in $\DD_B \times [0,1]$, compatibly with given $\cB$-labelling of $\DD_B\times \{0,1\}$.
        See Figure \ref{fig:passing-through-defect}.
      \item \emph{\underline{Monoidal Structure}}
        induced by a embedding $\DD_B \sqcup \DD_B \hookrightarrow \DD_B$.
        The unit $\idty$ is the empty object.
      \item \emph{\underline{Duality:}}
        Fix an object $X$ in $\Rib_\cB$.
        Its dual $X^*$ is obtained by reversing the orientation, i.e. reversing the signs on all marked points.
        Coevaluation $\idty \to X\otimes X^*$ is given by connecting each marked point with its partner by an arc that remains in a single stratum.
        We can ensure the arcs are not entangled, e.g. by representing them by flat half-circles.
        Evaluation $X^*\otimes X \to \idty$ is described similarly.
\end{description}
  \end{definition}
For the coefficient system considered in Section \ref{sec:parabolic-defect}, the pivotal category assigned to the defect is generated under co-limits by the image of the central structure.
    This means the hom-spaces in our skein categories will be generated by diagrams that are at most transverse to the defect.  This motivates the following definition.

\begin{definition}
    The subcategory $\Rib^\pitchfork_\cB \subset \Rib_\cB$ has as its objects those $\cB$-labelings none of whose points $x_i$ lie on the $B$-defect, and as its morphisms only those ribbon graphs whose edges intersect the $B$ defect only at their end-vertices.
\end{definition}

\begin{remark}
    As a caution we note that stratified isotopy is a weaker equivalence relation than isotopy relative to the defect, since the former allows the location where a strand meets a defect to change.
    For example, the tangles in Figure \ref{fig:crossing-before-or-after-defect} are stratified-isotopic but not isotopic relative to the defect.
    A coupon in the defect may have edges transverse to the surface, see Figure \ref{fig:passing-through-defect}.
    In this case we implicitly pre-compose with the central structure and think of the underlying morphism as being between objects in $\cB$.
\end{remark}    

    \begin{remark}\label{rmk:embeddings-make-functors}
      Any bipartite embedding $\sqcup_{i} \DD_{R_i} \hookrightarrow \sqcup_{j} \DD_{S_j}$ between finite collections of disks induces a functor between the Deligne-Kelly tensor products of the associated categories of ribbon graphs.
      This is because coloured bipartite ribbon graphs are themselves defined by bipartite embeddings and so the pullback of a morphism along a disk embedding is again a morphism.
      Similarly, a (bipartite) isotopy between two bipartite embeddings induces a natural transformation in the category of ribbon graphs.
      We will make frequent use of such induced functors and natural transformations throughout the paper.
    \end{remark}

    \begin{definition-proposition}\label{lemma:defect-ribbon-properties}
      $\Rib_\cB$ carries a pivotal $(\Rib_\cA,\Rib_\cC)$-central algebra, defined as follows:
    \end{definition-proposition}
    \begin{itemize}
    \item  The dual of an object $X \in \Rib_\cB$ is obtained by switching the sign on all points.
    
    \item  The pivotal structure is given by the canonical (identity) isomorphism $i_X : X^{**} \overset{\cong}{\longrightarrow} X$.

      \item The pivotal central structure consists of a pivotal tensor functor, 
      \[H^{rib} :\Rib_\cA \boxtimes \overline{\Rib}_\cC \to \Rib_\cB, \qquad J^{rib} : H^{rib}(-\otimes-) \to H^{rib}(-)\otimes H^{rib}(-)\] together with a half braiding $c^{rib}$ lifting this to the Drinfeld centre.

      Following \cite[Fig. 2]{BJS2021}, this is constructed by lifting the functor induced by the embedding $\DD_A \sqcup \overline{\DD}_C \hookrightarrow \DD_B$ to the Drinfeld centre.
      The half-braiding is induced by an isotopy between the two embeddings $\DD_A \sqcup \DD_B \sqcup \DD_C \hookrightarrow \DD_C$ shown in Figure \ref{fig:induced-half-braiding}.
      
      \item Compatibility with the pivotal structure on $\Rib_\cA$ and $\Rib_\cC$ follows from the fact that the pivotal structure on $\Rib_\cA$ and $\Rib_\cC$ is constructed in the same way as above.
      \end{itemize}

        \begin{figure}
      \begin{center}
        \includegraphics[scale=.7]{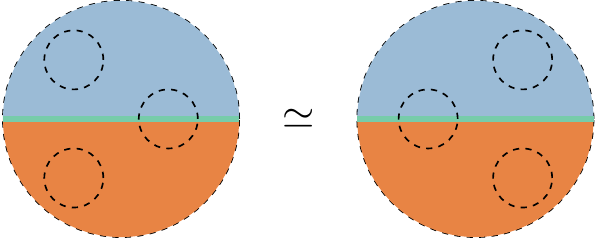}
        \caption{An isotopy induces the half braiding that gives $\Rib_\cB$ a $(\Rib_\cA,\Rib_\cC)$-central structure.
        }\label{fig:induced-half-braiding}
      \end{center}
        \end{figure}

  \begin{figure}
    \begin{center}
      \includegraphics{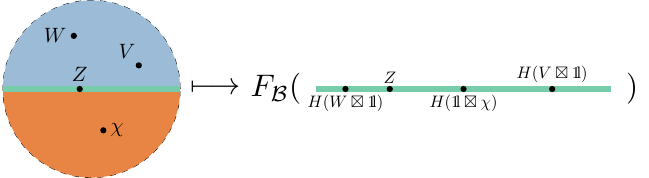}
      \caption{The functor $\RT_\cB$ on objects of $\Rib_\cB$.
      Here $H$ comes from the central structure $\cA \boxtimes \overline{\cC} \to Z(\cB)$ and $F_\cB$ is the evaluation functor for the planar diagrammatics of $\cB$.}
        \label{fig:RT-B-on-objects}
    \end{center}
  \end{figure}

  \begin{definition-proposition}\label{def:defect-RT}
 The following procedure determines an essentially surjective, full, pivotal monoidal functor $\RT_\cB : \Rib_\cB \to \cB$:

  Given a defect ribbon graph $T$ in $\DD_B\times [0,1]$, first pick a representative in generic position with respect to the orthogonal projection onto the defect, so that all crossings are transverse and coupons are not identified by the projection.
  Project this representative onto the defect, replacing all crossings with a coupon labelled by the appropriate half-crossing in $Z(\cB)$.
    Extend linearly to finite sums of defect tangles in $\DD_B\times [0,1]$.

    Use the planar graphical calculus of the pivotal category $\cB$ to evaluate the resulting (finite sums of) graphs on the defect as morphisms in $\cB$.
    \end{definition-proposition}
See \cite[\S 3.1]{GPV2012} and \cite[\S 4.4.2]{Selinger2011} for the planar diagrammatics of pivotal categories.

      \begin{figure}
        \begin{center}
          \includegraphics{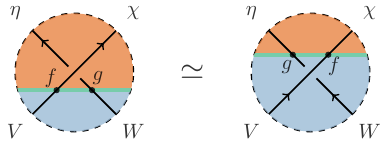}
          \caption{A birds-eye view of isotopic tangles, demonstrating that crossings can move through defects.
          The coupons can move past each other because the isotopy takes place in a thickening of the disk.}\label{fig:crossing-before-or-after-defect}
        \end{center}
      \end{figure}
    \begin{proof}
      We start by showing $\RT_\cB$ is well defined, i.e. it respects isotopies of the underlying stratified ribbon graphs and doesn't depend on the projection of the bulk ribbon graphs into the defect.
      It suffices to show that it is invariant under stratified versions of the Reidemeister moves.
      Let $H: \cA \boxtimes \overline{\cC} \to \cB$ 
      be the functor associated with the central structure. We use $\sigma^\cA, \sigma^\cC$, etc. to denote the braiding.

      \begin{description}
        \item[Crossings can pass through defects:]

          We show that the isotopic graphs in Figure \ref{fig:crossing-before-or-after-defect} are sent to the same morphism in $\cB$.
          Let $f : H(V\boxtimes \idty_\cC) \to H(\idty_\cA\boxtimes \chi)$ and $g : H(W \boxtimes \idty_\cC) \to H(\idty_\cA \boxtimes \eta)$ be the labels for the two coupons in the tangles shown in Figure \ref{fig:crossing-before-or-after-defect}.
          Then $\RT_\cB$ maps the left tangle of Figure \ref{fig:crossing-before-or-after-defect} to the morphism
          \begin{equation}\label{eq:cross-after-defect}
            \begin{tikzcd}
              H(V \!\boxtimes\! \idty) \!\otimes\! H(W\!\boxtimes\! \idty) \ar[r,"f\otimes g"]
              & H(\idty \!\boxtimes\! \chi)\! \otimes\! H(\idty \!\boxtimes\! \eta)
              \cong H(\idty \!\boxtimes\! \chi \!\otimes\!\eta) \ar[r,"H\left(\mathrm{id}\boxtimes \sigma^{\overline{\cC}}\right)"]
              &[7mm] H(\idty\!\boxtimes\! \eta\!\otimes\!\chi)
            \end{tikzcd}
          \end{equation}
          While the tangle on the right is sent to
          \begin{equation}\label{eq:cross-before-defect}
            \begin{tikzcd}
              H(V\!\otimes\! W \!\boxtimes\! \idty) \ar[r,"H(\sigma^\cA \boxtimes \mathrm{id})"]
              &[7mm] H(W\!\otimes\! V \!\boxtimes\! \idty)
              \cong H(W \!\boxtimes\! \idty) \!\otimes\! H(V\!\boxtimes\! \idty) \ar[r,"g\otimes f"]
              & H(\idty \!\boxtimes\! \eta)\! \otimes\! H(\idty \!\boxtimes\! \chi)
            \end{tikzcd}
          \end{equation}

          By assumption $H$ respects the braided tensor structure, so $\sigma^{Z(\cB)} = H\left(\sigma^{\cA\boxtimes \overline{\cC}}\right) = H\left(\sigma^\cA \boxtimes \sigma^{\overline{\cC}}\right)$.
          It follows that
          \begin{equation}\label{eq:defect-braiding}
           \sigma^{Z(\cB)}|_{H(\idty\boxtimes -)}^{} = H\left(\mathrm{id} \boxtimes \sigma^{\overline{\cC}}\right) \quad\text{ and }\quad \sigma^{Z(\cB)}|_{H(-\boxtimes \idty)}^{} = H(\sigma^\cA\boxtimes \mathrm{id}).
          \end{equation}
          Since the braiding is a natural transformation, we conclude
          \begin{align*}
            H\left(\mathrm{id}\boxtimes \sigma^{\overline{\cC}}\right)\circ (f\otimes g)
            & = \sigma^{Z(\cB)} \circ (f \otimes g) \\
            &= (g\otimes f) \circ \sigma^{Z(\cB)} \\
            &= (g\otimes f) \circ H\left(\sigma^\cA\boxtimes \mathrm{id}\right).
          \end{align*}
          It follows that \eqref{eq:cross-after-defect} and \eqref{eq:cross-before-defect} are the same morphism up to composition with the isomorphisms $H(-\otimes -) \cong H(-)\otimes H(-)$.

        \item[Stratified Reidemeister 3 holds:]
          Our previous result means we only need to consider tangles where no strand moves between regions.
          Even so, the three strands can be in some combination of the $\cA,\cB,$ and $\cC$ regions.

          The functor $\RT_\cB$ sends any crossing to the appropriate braiding in $Z(\cB)$, so the various stratified versions of the Yang-Baxter equation all simplify to those of $Z(\cB)$, where some of the objects happen to be in the image of the functor $H$.

        \item[Stratified Reidemeister 2 holds:]
          By the same argument as above, this reduces to and hence follows from the analogous statement about the invertibility of the braiding in $Z(\cB)$.
      \end{description}

      Both fullness and essential surjectivity follow from the fact that pivotal categories have coherent planar diagrammatics \cite{Selinger2011}.
    \end{proof}

    \begin{lemma}
    Skein relations are compatible with embeddings.
    Let $\iota_A : \DD_A \hookrightarrow \DD_B$ and $\iota_C : \DD_C \hookrightarrow \DD_B$ be stratified embeddings. If two $\cA$ (resp. $\cC$) colored ribbon graphs are equivalent as $\cA$ (resp. $\cC$) skeins, then their push forwards along $\iota_A$ (resp. $\iota_C$) are equivalent as defect skeins.
    \end{lemma}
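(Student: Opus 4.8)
The plan is to reduce the statement to functoriality of the Reshetikhin--Turaev construction, exhibiting the two push-forward maps as restrictions of the central functor $H^{rib}$ of Definition-Proposition~\ref{lemma:defect-ribbon-properties}. Recall that two $\cA$-coloured (resp.\ $\cC$-coloured, resp.\ $\cB$-coloured bipartite) ribbon graphs are equivalent as skeins precisely when $\RT_\cA$ (resp.\ $\RT_{\overline{\cC}}$, the evaluation functor for the reversed category $\overline{\cC}$; resp.\ $\RT_\cB$) identifies them, that being the defining property of the functors of Definition-Proposition~\ref{def:defect-RT}. So the first step is to record that the embedding-induced functors of Remark~\ref{rmk:embeddings-make-functors} satisfy
\[
\iota_{A*}\ \cong\ H^{rib}(-\boxtimes \idty), \qquad \iota_{C*}\ \cong\ H^{rib}(\idty\boxtimes -).
\]
Indeed $H^{rib}$ is built by lifting to $\mathcal{Z}(\Rib_\cB)$ the functor induced by the bipartite embedding $\DD_A\sqcup \overline{\DD}_C\hookrightarrow \DD_B$; restricting to each tensor factor gives the functor induced by $\DD_A\hookrightarrow\DD_B$ (resp.\ $\overline{\DD}_C\hookrightarrow\DD_B$), which is bipartite-isotopic to $\iota_A$ (resp.\ $\iota_C$), so Remark~\ref{rmk:embeddings-make-functors} supplies the natural isomorphism. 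The passage from $\Rib_\cC$ to $\overline{\Rib}_\cC$ here is exactly the orientation reversal dictated by the normal-direction convention for $C$-regions, which is why $\RT_{\overline{\cC}}$, rather than $\RT_\cC$, is the relevant evaluation on the $C$-side.

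The second and main step is the commuting square
\[
\begin{tikzcd}
\Rib_\cA \boxtimes \overline{\Rib}_\cC \ar[r,"H^{rib}"] \ar[d,"\RT_\cA \boxtimes \RT_{\overline{\cC}}"'] & \Rib_\cB \ar[d,"\RT_\cB"] \\
\cA \boxtimes \overline{\cC} \ar[r,"H"] & \cB
\end{tikzcd}
\]
commuting up to the coherence isomorphisms $J^{rib},J$. Granting this, if $T_1,T_2$ are $\cA$-coloured ribbon graphs with $\RT_\cA(T_1)=\RT_\cA(T_2)$, then $\RT_\cB(\iota_{A*}T_i)\cong H\big(\RT_\cA(T_i)\boxtimes \idty\big)$ depends only on $\RT_\cA(T_i)$, so the two push-forwards agree in $\cB$ and hence are equivalent as defect skeins; the $\cC$-case is the mirror argument with $\overline{\cC}$, $\RT_{\overline{\cC}}$, and $H^{rib}(\idty\boxtimes -)$. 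To verify the square I would take a $\cB$-coloured graph $\Gamma$ in the image of $H^{rib}(-\boxtimes \idty)$, so $\Gamma$ lies entirely in the $A$-region of $\DD_B\times[0,1]$, and run the recipe of Definition-Proposition~\ref{def:defect-RT}: in generic position each crossing of $\Gamma$ projects to a coupon labelled by the half-braiding $\sigma^{Z(\cB)}|_{H(-\boxtimes\idty)} = H(\sigma^\cA\boxtimes\mathrm{id})$, which is precisely the identity~\eqref{eq:defect-braiding} isolated in the proof of Definition-Proposition~\ref{def:defect-RT}; each coupon and each oriented cup or cap in the $A$-region is the $H$-image of the corresponding datum of $\cA$, by the definition of $H^{rib}$ together with the pivotal compatibility in Definition-Proposition~\ref{lemma:defect-ribbon-properties}; and evaluating the resulting planar graph by the graphical calculus of $\cB$ produces exactly $H$ applied to the planar $\cA$-graph that $\RT_\cA$ evaluates (the ribbon twists match since $H$ is a ribbon functor, and $J^{rib},J$ mediate the tensor products). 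The $\cC$-region is handled identically using the other clause of \eqref{eq:defect-braiding}.

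Conceptually the argument is short: essentially all of it is packaged into \eqref{eq:defect-braiding} and the construction of $H^{rib}$. I expect the genuine obstacle to be bookkeeping --- tracking framings, the exact placement of (co)evaluations, and the coherence isomorphisms $J^{rib},J$ as strands are isotoped into generic position transverse to the defect projection, so that no sign or ribbon twist is dropped, and checking that the embedding $\DD_A\sqcup\overline{\DD}_C\hookrightarrow\DD_B$ really is bipartite-isotopic to $\iota_A$ and $\iota_C$ with the orientation/handedness conventions for the $C$-region in force (this is the one place where the $\overline{\cC}$ could cause trouble if mishandled). As an alternative, more hands-on but logically equivalent route, I would argue directly that the local relations generating $\cA$- and $\cC$-skein equivalence --- stratified Reidemeister moves and the coupon-composition/fusion relations of the graphical calculus --- are supported in balls, and a ball inside $\DD_A$ maps under $\iota_A$ to a ball meeting only the $A$-region of $\DD_B$, where the ambient relations are literally the same; the functorial formulation above is simply the cleanest way to organize this observation while keeping track of the reversal on the $C$-side.
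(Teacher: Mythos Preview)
Your proposal is correct and follows essentially the same approach as the paper: both arguments reduce the claim to commutativity of the square
\[
\begin{tikzcd}
\Rib_\cA \ar[r,"(\iota_A)_*"] \ar[d,"\RT_\cA"'] & \Rib_\cB \ar[d,"\RT_\cB"] \\
\cA \ar[r,"H|_{\cA\boxtimes\idty}"] & \cB
\end{tikzcd}
\]
(and its $\cC$-analogue), observing that this commutativity is built into the definition of $\RT_\cB$ on bulk skeins. The paper's proof is terser---it simply writes down the two diagrams and asserts they commute by construction---whereas you take more care to justify the identification $\iota_{A*}\cong H^{rib}(-\boxtimes\idty)$ and to track the orientation reversal on the $\cC$-side via $\overline{\cC}$ and $\RT_{\overline{\cC}}$; the paper elides this and writes $\RT_\cC$ directly.
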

\begin{proof}
        The claim follows from commutativity of the following diagrams:
        \begin{equation}
            \begin{tikzcd}
            \Rib_\cA \ar[r,"(\iota_A)_*"] \ar[d,"\RT_\cA"] & \Rib_\cB \ar[d,"\RT_\cB"] \\
            \cA \cong \cA\boxtimes \idty \ar[r,"H|_{\cA\boxtimes \idty}"] & \cB
            \end{tikzcd}
            \hspace{2cm}\begin{tikzcd}
            \Rib_\cC \ar[r,"(\iota_C)_*"] \ar[d,"\RT_\cC"] & \Rib_\cB \ar[d,"\RT_\cB"] \\
            \cC \cong \idty\boxtimes \cC \ar[r,"H|_{\idty \boxtimes \cC}"] & \cB
            \end{tikzcd}
        \end{equation}
        which itself follows from the definition of $\RT_\cB$ on bulk skeins.
\end{proof}

  \subsection{Defect skein modules and categories}\label{sec:defect-skein-category}
  We now define the defect skein category and associated constructions analogously to the unstratified skein theory, using our defect Reshetikhin-Turaev evaluation functors as a local model near the domain walls.  Fix a compact oriented bipartite 3-manifold, possibly with boundary, and a $\cB$-labeling $X$ on $\partial M$.

    \begin{definition}\label{def:rel-defect-skein-module}
    
    The \emph{\bf relative defect skein module} $\Sk_\cB(M,X)$ is the $\K$-module spanned by isotopy classes of stratified ribbon graphs in $M$ compatible with $X$.
    These are taken modulo the relations induced by $\RT_\cA, \RT_\cC,$ and $\RT_\cB$ from the bipartite embedding of any cylinder $\DD_\mathcal{R} \times [0,1] \hookrightarrow M$. Here $\mathcal{R} = \cA,\cB,\cC$.
  \end{definition}
In the special case where $X=\emptyset$, we abbreviate by $\Sk_\cB(M)$ and refer to this as the \emph{\bf defect skein module}.

  \begin{definition}\label{def:defect-skein-category}
    The \emph{\bf defect skein category} $\SkCat_{\cB}(\Sigma)$ has
    \begin{description}
      \item \emph{\underline{objects}:} $\cB$-labellings: Finite collections of oriented framed marked points in $\Sigma$, colored by objects of the category associated to their region.
      \item \emph{\underline{morphisms:}} The homomorphism space from $X$ to $Y$ is the relative defect skein module
        \begin{equation}
          \Hom(X,Y) := \Sk_{\cB}(\Sigma \times [0,1], \overline{X}\sqcup Y),
        \end{equation}
        where $\overline{X}$ is on $\Sigma \times \{0\}$ and $Y$ is on $\Sigma \times \{1\}$.
        Composition is given by stacking copies of the thickened surface and a smoothing at boundary points.
    \end{description}
  \end{definition}

\begin{definition}
The transverse defect skein category $\SkCat^\pitchfork_\cB(\Sigma)$ is the subcategory of $\SkCat_\cB(\Sigma)$ with 
\begin{description}
    \item \emph{\underline{objects}:} $\cB$ labellings of $\Sigma$ with marked points only in the $\cA$ and $\cC$ regions.

    \item \emph{\underline{morphisms}:} The homomorphism space from $X$ to $Y$ is the subspace of $\Sk_\cB(\Sigma\times[0,1], \overline{X}\sqcup Y)$ spanned by equivalence classes of $\cB$-colored stratified ribbon graphs which intersect the defect transversely or not at all.
\end{description}
\end{definition}
\begin{remark} We note that the inclusion $\SkCat^\pitchfork_\cB(\Sigma) \to \SkCat_\cB(\Sigma)$ is neither full nor essentially surjective  in general.
\end{remark}

An important feature of skein theory is that we have an equivalence $\SkCat_\cB(\DD_A) \cong \cA$ as ribbon categories. (Similarly for $\cC$.)
This follows from the fact that the evaluation functor $\RT_\cA : \Rib_\cA \to \cA$ is full, essentially surjective and gives a well defined graphical calculus.
Now we show a similar result for $\SkCat_\cB(\DD_B)$.

\begin{lemma}\label{lemma:defect-disk-gives-B} We have an equivalence
  $\SkCat_\cB(\DD_B) \cong \cB$ as pivotal $(\cA,\cC)$-central algebras.
\end{lemma}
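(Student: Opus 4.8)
The plan is to mimic the defect-free argument that identifies $\SkCat_\cB(\DD_A)\simeq\cA$ as a ribbon category, but now tracking the full pivotal $(\cA,\cC)$-central structure. The main tool is the defect Reshetikhin--Turaev functor $\RT_\cB:\Rib_\cB\to\cB$ from Definition-Proposition \ref{def:defect-RT}, which we have already shown is essentially surjective, full, and pivotal monoidal, together with the central structure on $\Rib_\cB$ packaged in Definition-Proposition \ref{lemma:defect-ribbon-properties}.

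\medskip

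\noindent\textbf{Step 1: Compare $\SkCat_\cB(\DD_B)$ with $\Rib_\cB$.} First I would observe that by Definition \ref{def:defect-skein-category} the morphism space $\Hom_{\SkCat_\cB(\DD_B)}(X,Y)$ is the relative defect skein module $\Sk_\cB(\DD_B\times[0,1],\overline X\sqcup Y)$, i.e. isotopy classes of stratified ribbon graphs in $\DD_B\times[0,1]$ with prescribed boundary, modulo the skein relations induced by $\RT_\cA,\RT_\cB,\RT_\cC$ from embedded cylinders. On the other hand $\Rib_\cB$ (Definition \ref{def:defect-ribbons}) has exactly these stratified ribbon graphs in $\DD_B\times[0,1]$ as morphisms, modulo only isotopy. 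So there is a canonical full monoidal functor $\Rib_\cB\twoheadrightarrow\SkCat_\cB(\DD_B)$ which is the identity on objects, and this functor is compatible with duals and the pivotal and central structures because all of these were defined on $\Rib_\cB$ via disk embeddings, hence descend.

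\medskip

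\noindent\textbf{Step 2: Factor $\RT_\cB$ through the skein category.} The key point is that the skein relations imposed in $\SkCat_\cB(\DD_B)$ are, by construction, precisely the relations that $\RT_\cB$ already respects: a cylinder embedding $\DD_\mathcal{R}\times[0,1]\hookrightarrow\DD_B\times[0,1]$ induces (Remark \ref{rmk:embeddings-make-functors}) a functor compatible with the RT functors, and $\RT_\cA,\RT_\cC$ factor through $\RT_\cB$ up to $H$ by the last Lemma of the excerpt, while $\RT_\cB$ itself is well-defined on isotopy classes by Definition-Proposition \ref{def:defect-RT}. Hence $\RT_\cB$ descends to a functor $\overline{\RT}_\cB:\SkCat_\cB(\DD_B)\to\cB$ fitting into a commuting triangle with $\Rib_\cB\twoheadrightarrow\SkCat_\cB(\DD_B)$. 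Since $\RT_\cB$ is essentially surjective and full, so is $\overline{\RT}_\cB$. It remains to prove faithfulness.

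\medskip

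\noindent\textbf{Step 3: Faithfulness — the main obstacle.} This is where the real content lies. I would argue that two stratified ribbon graphs in $\DD_B\times[0,1]$ with the same image under $\RT_\cB$ are already equal in $\SkCat_\cB(\DD_B)$. The strategy: put both graphs in generic position with respect to projection to the defect disk, so each becomes a planar $\cB$-coloured graph (with coupons recording half-braidings from the central structure, per Definition-Proposition \ref{def:defect-RT}); the projection move, the passing of crossings through the defect, and Reidemeister-type moves are all realized by cylinder skein relations (this is essentially the content of the proof of Definition-Proposition \ref{def:defect-RT}, now read as relations rather than as well-definedness of a functor). Thus in $\SkCat_\cB(\DD_B)$ every morphism equals a planar $\cB$-coloured graph on the defect, and the coherent planar graphical calculus of the pivotal category $\cB$ \cite{Selinger2011} identifies two such graphs iff they represent the same morphism of $\cB$, i.e. iff they have the same $\RT_\cB$-image. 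The technical heart is checking that \emph{every} local relation in the pivotal calculus of $\cB$ (pivotality, coupon composition, the sliding moves) is induced by some bipartite cylinder embedding into $\DD_B\times[0,1]$ — i.e. that the transverse/near-defect relations suffice. I would handle this by noting that a planar graph on the defect can be isotoped to lie in a small defect cylinder $\DD_B\times[0,1]$, so the planar-calculus relations of $\cB$ are literally the $\RT_\cB$-relations coming from that cylinder, and invoke fullness/faithfulness of the planar evaluation functor for pivotal categories.

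\medskip

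\noindent\textbf{Step 4: Structure-preservation.} Finally I would note that the equivalence $\overline{\RT}_\cB$ is automatically an equivalence of pivotal $(\cA,\cC)$-central algebras: the pivotal structure on both sides is the canonical one, the central functor $H$ and its tensor and half-braiding data on $\SkCat_\cB(\DD_B)$ are defined via the disk embeddings $\DD_A\sqcup\overline{\DD}_C\hookrightarrow\DD_B$ (Definition-Proposition \ref{lemma:defect-ribbon-properties}) and on $\cB$ via the given central structure, and $\RT_\cB$ was constructed precisely to intertwine them; compatibility then follows from the commuting squares of the last Lemma together with the analogous statements $\SkCat_\cA(\DD_A)\simeq\cA$, $\SkCat_\cC(\DD_C)\simeq\cC$.
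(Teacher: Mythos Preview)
Your proposal is correct and follows essentially the same route as the paper: factor $\RT_\cB$ through $\SkCat_\cB(\DD_B)$, use fullness and essential surjectivity of $\RT_\cB$, argue faithfulness from the fact that the skein relations are by definition the kernel of $\RT_\cB$, and then check compatibility of the central structures via the commuting square relating $H^{\mathrm{Sk}}$ and $H$. The paper dispatches your Step~3 in a single sentence (``since skein relations are exactly the kernel of this map''), treating it as immediate from Definition~\ref{def:rel-defect-skein-module} because a small cylinder $\DD_B\times[0,1]$ embeds into $\DD_B\times[0,1]$ and hence the full $\RT_\cB$-kernel is imposed; your more careful unpacking of this point is fine but not required.
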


\begin{proof}
  The pivotal functor $\RT_\cB : \Rib_\cB \to \cB$ of Definition \ref{def:defect-RT} is full and essentially surjective.
  Since skein relations are exactly the kernel of this map, it follows that $\RT_\cB$ induces an equivalence of pivotal categories $\SkCat_\cB(\DD_B) \cong \cB$.

  By Remark \ref{rmk:embeddings-make-functors}, the $(\Rib_A,\Rib_C)$-central structure on $\Rib_\cB$ descends to the skein category.
  Let $(H^\text{Sk}, J^{Sk}, c^{Sk})$ be the $(\SkCat(\DD_A),\SkCat(\DD_C)$-central structure on $\SkCat_\cB(\DD_B)$.
  Equivalence of the central structures can be expressed by this commutative diagram:
  \begin{equation*}
    \begin{tikzcd}
      \SkCat_\cB(\DD_A) \boxtimes \SkCat_\cB(\overline{\DD}_C) \ar[d,"\RT_\cA \boxtimes \RT_\mathcal{\overline{C}}"]\ar[r,"{H^\mathrm{Sk},c^{Sk}}"]
      &[5mm] Z(\SkCat_\cB(\DD_B)) \ar[d,"Z(\RT_\cB)"] \\
      \cA \boxtimes \overline{\cC} \ar[r,"{H,c}"]
      & Z(\cB)
    \end{tikzcd}
  \end{equation*}
  To obtain the right-most vertical arrow in the above diagram we've used that $\RT_\cB$ is a categorical equivalence.
  Tangles entirely in the bulk regions of $\DD_B$ are exactly those in the image of $H^\mathrm{Sk}$.
  In the construction of $\RT_\cB$ such tangles are projected onto the defect where $H$ gives the appropriate objects and morphisms of $\cB$.
  This implies that $\RT_\cB \circ H^\mathrm{Sk} = H \circ (\RT_\cA \boxtimes \RT_\cC)$.

  When projection onto the defect introduces crossings, they are replaced by coupons labelled by the half braiding $c$.
  The half braiding $c^\mathrm{Sk}$ is induced by the isotopy in Figure \ref{fig:induced-half-braiding}, which introduces crossings between strands.

  Commutativity of the diagram follows from the fact that $\RT_\cB$ sends a coupon to the labelling morphism.
\end{proof}

\subsection{Categories and coends}\label{sec:various-cats}
Here we recall a few categorical concepts and definitions we will require, see \cite{Adamek_Rosicky_1994} for complete details.

\begin{definition}\label{def:cat-cat}
  The 2-category $\Cat$ of \emph{\bf $\K$-linear categories and functors} has:
  \begin{description}
    \item \emph{\underline{objects:}} Small $\K$-linear categories.
    \item \emph{\underline{1-morphisms:}} $\K$-linear functors.
    \item \emph{\underline{2-morphisms:}} natural transformations
\end{description}
\end{definition}

Given a $\K$-linear bifunctor $F:\cC\times \mathcal{C}^{op}\to \mathcal{E}$, where $\cC$ is a linear categories, and $\mathcal{E}$ contains all colimits, the coend of $F$ is the object of $\mathcal{E}$ defined by,
\begin{equation}\label{eq:coend-colimit}
\operatornamewithlimits{colim}_{\overset{c,d \in \cC}{f:c\to d}}
\int^{c\in\cC} F = \left(
    \begin{tikzcd}[column sep=large,row sep=scriptsize]
        &F(c,c)\arrow[d, "F(f{,}\id_{c})"] \\
        F(d,d) \arrow[r,"F(\id_{d}\!{,}f)"] &F(d,c)
    \end{tikzcd}\right)
\end{equation}

\begin{definition}\label{def:bimod-cat}
  The 2-category $\Bimod$ of \emph{\bf $\K$-linear categories and categorical bimodules} has:
  \begin{description}
    \item \emph{\underline{Objects:}} Small $\K$-linear categories.
    \item \emph{\underline{1-morphisms:}} From $\cC$ to $\mathcal{D}$, these are $\K$-linear functors $F : \cC \times \mathcal{D}^{op} \to \Vect$.
      \footnote{Such functors are sometimes called \emph{bimodules} or \emph{profunctors}.}
    
      \item \emph{\underline{Composition:}} The composition of two bimodules $F : \cC \times \mathcal{D}^{op} \to \Vect$ and $G: \mathcal{D} \times \mathcal{E}^{op} \to \Vect$ is given by the coend \cite[IX.6]{MacLane1978}:
      \begin{equation}\label{eq:profunctor-composition}
        (F \circ G) (c,e) := \int^{d\in\mathcal{D}} F(c,d)\otimes G(d,e)
      \end{equation}
    \item \emph{\underline{2-morphisms:}} Natural transformations.
  \end{description}
\end{definition}

\begin{definition}\label{def:pr}
  The 2-category of \emph{\bf locally presentable categories $\Pr^L$} has:
  \begin{description}
    \item \emph{\underline{Objects:}} Locally presentable $\K$-linear categories.
    \item \emph{\underline{1-morphisms:}} cocontinuous functors.
    \item \emph{\underline{2-morphisms:}} natural transformations.
  \end{description}
\end{definition}
We have natural embeddings of 2-categories,
$\Cat \subset \Bimod \subset {\Pr}^L,$ constructed as follows.  Firstly, we have a fully faithful 2-functor $\Cat \to \Bimod$, which is the identity on objects, and which sends a functor $F : \cC \to \mathcal{D}$ to the bimodule
\begin{equation}\label{eq:functor-to-bimodule}
  \begin{aligned}
    \cC \times \mathcal{D}^{op} &\to \Vect \\
    (c,d) &\mapsto \Hom(d,F(c)),
  \end{aligned}
\end{equation}
and which sends a natural transformation to a bimodule homomorphism in the obvious way.

Any $\cC\in\Bimod$ determines a locally presentable category $\widehat{\cC}\in\Pr^L$, as follows. Let $\Vect\in\Pr^L$ denote the category of vector spaces (whose basis may be of arbitrary cardinality).  Given $\cC\in\Cat$, we will denote by $\widehat{\cC} := \Fun(\cC^{op},\Vect)\in\Pr^L$ the pre-sheaf category of $\cC$. 

We recall the Yoneda embedding $\cC\to\widehat{C}$, sending $c \mapsto \Hom(-,c)$.  Any $\K$-linear functor from $\cC$ to a cocomplete category $\mathcal{D}$ extends canonically through the Yoneda embedding to a cocontinuous functor $\widehat{\cC}\to\mathcal{D}$; for this reason $\widehat{C}$ is also called the \emph{free cocompletion} of $\cC$.

The free cocompletion extends to a 2-functor $\widehat{\cdot}: \Bimod \to \Pr^L$, as follows. The free cocompletion $\widehat{N}$ of a bimodule $N: \cC \times \mathcal{D}^{op} \to \Vect$ is the cocontinuous functor $\widehat{N} : \widehat{\cC} \to \widehat{\mathcal{D}}$,
\begin{equation}\label{eq:cocomp-on-a-bimodule}
  \widehat{N}(S) : d \mapsto \int^{c \in \cC} N(c,d) \otimes S(c).
\end{equation}
In the special case that $N$ is obtained from a functor $F$, the free cocompletion $\widehat{F}$ of the functor $F : \cC \to \mathcal{D}$ is given by the coend 
\begin{equation}\label{eq:yoneda-on-functors}
  \hat{F}(S)(d) = \int^{c} \Hom(d, F(c)) \otimes S(c). 
\end{equation}

A functor between categories in $\Pr^L$ with enough compact projectives lies in the image of the cocompletion functor if, and only if, it preserves compact projectives objects, or equivalently if, and only if, its right adjoint is itself cocontinuous.

\subsection{Cobordisms and gluing}\label{sec:gluing-skein-cats} 
Defect skein modules define a categorified oriented 2+1 TQFT for cobordisms with smoothly embedded codimension-one defects, which we now survey.
For our main application we will compute with stratified analogues of one and two handle attachments and three dimensions.

  \begin{definition}\label{def:skein-bimodule-functor}
  Let $M$ be a stratified 3-manifold as in Definition \ref{def:rel-defect-skein-module}, with the additional data of a decomposition of its boundary $\partial M \cong (\overline{\Sigma}_{in} \sqcup \Sigma_{out}) \cup R.$
  We will write 
  \begin{equation}
\underline{\Sk}(M) : \SkCat(\Sigma_{in})^{op} \times \SkCat(\Sigma_{out}) \to \Vect
  \end{equation}
    for the bimodule defined by the relative defect skein module $X_{in},X_{out} \mapsto \Sk_\cB(M,\overline{X}_{in}\sqcup X_{out})$, and refer to this as the \emph{\bf defect skein functor}.
  \end{definition}

  The following result is a mild generalization of \cite[Theorem 2.5]{GJS2021} and \cite[Theorem 4.4.2]{Walker_2006} to bipartite 3-manifolds.
  The main claim is that collar shift isotopies and stratified isotopies supported in balls disjoint from $\Sigma_{gl} \sqcup \overline{\Sigma}_{gl}$ generate stratified isotopies on $M_{gl}$.  
    \begin{lemma}\label{lemma:skfun-glues}
        Let $M$ be an oriented stratified 3-manifold with a stratification $M \cong M_A \cup_{S_B} M_C$.
        Fix a decomposition $\partial M \cong \Sigma_{gl} \cup \overline{\Sigma}_{gl} \cup R$ and let $M_{gl}$ denote $M$ with the two copies of $\Sigma_{gl}$ identified.
        There is an equivalence of functors $\SkCat(R)^{op} \to \Vect$
        \begin{equation}\label{eq:skfun-gluing}
            \underline{\Sk}(M_{gl})(-) \cong \int^{X \in \SkCat(\Sigma_{gl})} \hspace{-1em} \underline{\Sk}(M)(\overline{X} \sqcup X\sqcup -).
        \end{equation}
    \end{lemma}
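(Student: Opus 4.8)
The plan is to adapt the proof of the unstratified gluing theorem (\cite[Theorem 2.5]{GJS2021}, cf.\ \cite[Theorem 4.4.2]{Walker_2006}), tracking the interaction of stratified ribbon graphs with the gluing surface $\Sigma_{gl}$ in addition to the defect $S_B$; we may and do arrange $\Sigma_{gl}$ to be transverse to $S_B$ in $M$, so that $\Sigma_{gl}$ is itself a bipartite surface with defect curve $\Sigma_{gl}\cap S_B$ and $\SkCat(\Sigma_{gl})$ is its defect skein category. First I would build the comparison map from the coend to $\underline{\Sk}(M_{gl})$. By the presentation \eqref{eq:coend-colimit} of the coend as a cokernel, it suffices to give for each $X\in\SkCat(\Sigma_{gl})$ a map $\underline{\Sk}(M)(\overline{X}\sqcup X\sqcup -)\to\underline{\Sk}(M_{gl})(-)$ and to verify dinaturality. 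Identifying the two copies of $\Sigma_{gl}$ along fixed collars joins each marked point of $X$ to its oppositely-signed, equally-labelled partner in $\overline{X}$, so any stratified ribbon graph in $M$ with these boundary conditions along the gluing locus closes up to a stratified ribbon graph in $M_{gl}$; this operation is well defined on isotopy classes and sends the local skein relations imposed in $M$ to those imposed in $M_{gl}$, hence descends to relative skein modules. Dinaturality holds because, for a morphism $f$ of $\SkCat(\Sigma_{gl})$, both coend structure maps amount after gluing to inserting the defect ribbon graph of $f$ into a bicollar $\Sigma_{gl}\times[-1,1]$ of the gluing locus, and the two insertions differ by a collar-shift isotopy of $M_{gl}$.

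Surjectivity then follows from stratified general position: an arbitrary stratified ribbon graph in $M_{gl}$, together with its framing, can be isotoped to meet $\Sigma_{gl}$ transversely and to meet the codimension-two stratum $\Sigma_{gl}\cap S_B$ only transversely, and cutting along $\Sigma_{gl}$ then exhibits it as the image under the gluing map of a graph in $M$, the object $X\in\SkCat(\Sigma_{gl})$ being read off from the intersection points (which land in the $A$-, $B$-, or $C$-stratum of $\Sigma_{gl}$ according to which stratum of $M$ the graph crosses $\Sigma_{gl}$ in). The one point absent from the unstratified case is that the general-position argument must be carried out compatibly with the stratification near $\Sigma_{gl}\cap S_B$; this uses the local product structure there, and is precisely the setting in which the conventions of $\Rib^\pitchfork_\cB$ were designed to operate.

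The main obstacle is injectivity, which reduces to the claim flagged before the statement: stratified isotopies of $M_{gl}$ are generated by (i) those supported in balls disjoint from $\Sigma_{gl}\sqcup\overline{\Sigma}_{gl}$ and (ii) collar-shift isotopies of the bicollar $\Sigma_{gl}\times[-1,1]$. Granting this, a stratified isotopy carrying one glued-up graph to another can be subdivided into elementary pieces of these two types: a piece of type (i) is already an isotopy inside $M$ (its supporting ball lifts), hence changes nothing modulo the coend relations, while a piece of type (ii) pushes a transverse sheet of the graph across $\Sigma_{gl}$, which alters a cut-open representative exactly by composing its $\SkCat(\Sigma_{gl})$-boundary data with a morphism on one side and with the dual morphism on the other --- a coend relation. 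To establish the generation claim itself I would follow Walker's argument: a generic stratified isotopy of $M_{gl}$ is transverse to $\Sigma_{gl}$ away from finitely many times, and near each exceptional time it is, modulo ball-supported moves, a standard finger move of a sheet across $\Sigma_{gl}$, i.e.\ a collar shift, with the local analysis near $\Sigma_{gl}\cap S_B$ run compatibly with the product structure of the stratification. Finally, naturality in $R$ is automatic, since every construction is performed rel a collar of $R$ and gluing skeins onto $R$ commutes with both the gluing map and the coend structure maps; thus the comparison map is a natural transformation of functors $\SkCat(R)^{op}\to\Vect$, objectwise an isomorphism by the preceding, establishing the equivalence \eqref{eq:skfun-gluing}.
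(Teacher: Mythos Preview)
Your proposal is correct and follows the same approach as the paper. The paper does not give a proof of this lemma at all: it only remarks beforehand that the result is a mild generalization of \cite[Theorem~2.5]{GJS2021} and \cite[Theorem~4.4.2]{Walker_2006}, and that the main claim is precisely the generation statement you identify for injectivity (collar shifts plus ball-supported stratified isotopies generate all stratified isotopies of $M_{gl}$); your write-up is a careful unpacking of exactly that outline.
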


The coend relations in the formula above express how a coupon on one side of a gluing surface can pass through to become a coupon on the other side of the surface.
The appearance of a coend causes similar behaviour for coupons near gates, see Figure \ref{fig:internal-skein-details}.
Comparing the coend formulas for gluing \eqref{eq:skfun-gluing} and for composition of profunctors \eqref{eq:profunctor-composition}, we get the following corollaries:

\begin{corollary}
    Let $\Sigma_1 \overset{M_{12}}{\to} \Sigma_2 \overset{M_{23}}{\to} \Sigma_3$ be a pair of composable stratified cobordisms.  Then we have:
    \begin{equation}
      \Sk(M_{12}\cup_{\Sigma_{2}} M_{23}) \cong \Sk(M_{12})\circ \Sk(M_{23}).
  \end{equation}

\end{corollary}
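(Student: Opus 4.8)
The plan is to deduce the corollary directly from Lemma \ref{lemma:skfun-glues} by specializing the gluing formula to a composition of cobordisms and then recognizing the resulting coend as the composition of profunctors from Definition \ref{def:bimod-cat}. First I would set up notation: write $M := M_{12} \sqcup M_{23}$, which is a stratified $3$-manifold whose boundary decomposes as $\overline{\Sigma}_1 \sqcup \Sigma_2 \sqcup \overline{\Sigma}_2 \sqcup \Sigma_3$; here the two copies of $\Sigma_2$ are the outgoing boundary of $M_{12}$ and the incoming boundary of $M_{23}$ respectively. Then $M_{12}\cup_{\Sigma_2} M_{23}$ is exactly the manifold $M_{gl}$ obtained by identifying these two copies of $\Sigma_2$, with the roles $\Sigma_{gl} = \Sigma_2$ and $R = \overline{\Sigma}_1 \sqcup \Sigma_3$ in the statement of Lemma \ref{lemma:skfun-glues}. (One should note that the skein functor of a disjoint union $M_{12}\sqcup M_{23}$ computes as the ``external'' pairing $\underline{\Sk}(M_{12})\boxtimes \underline{\Sk}(M_{23})$ of the two bimodules, since a ribbon graph in a disjoint union is just a pair of ribbon graphs and the skein relations are imposed region-wise; this is essentially the monoidal structure of the TQFT and follows from the definitions.)

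Next I would apply \eqref{eq:skfun-gluing} with these identifications. This yields, for an object $(X_1, X_3) \in \SkCat(\Sigma_1)^{op}\times \SkCat(\Sigma_3)$,
\begin{equation*}
\underline{\Sk}(M_{12}\cup_{\Sigma_2}M_{23})(\overline{X}_1 \sqcup X_3) \;\cong\; \int^{Y\in\SkCat(\Sigma_2)} \underline{\Sk}(M_{12})(\overline{X}_1\sqcup Y)\otimes \underline{\Sk}(M_{23})(\overline{Y}\sqcup X_3),
\end{equation*}
where I have used the disjoint-union remark above to split the integrand. Comparing with the coend formula \eqref{eq:profunctor-composition} for profunctor composition, and recalling that $\underline{\Sk}(M_{12})$ is by Definition \ref{def:skein-bimodule-functor} precisely the bimodule $\SkCat(\Sigma_1)^{op}\times \SkCat(\Sigma_2)\to \Vect$ sending $(X_1,Y)\mapsto \Sk_\cB(M_{12},\overline{X}_1\sqcup Y)$ — and likewise for $M_{23}$ — this is exactly the definition of $\underline{\Sk}(M_{12})\circ \underline{\Sk}(M_{23})$ evaluated at $(X_1,X_3)$. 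One has to be slightly careful that the variance conventions match: the incoming boundary of $M_{23}$ contributes $\overline{Y}$, so $Y$ appears contravariantly there and covariantly in $\underline{\Sk}(M_{12})$, which is precisely the pattern of a coend over the bridging category $\SkCat(\Sigma_2)$ in Definition \ref{def:bimod-cat}; here one uses that $\SkCat(\Sigma)^{op}\cong\SkCat(\overline{\Sigma})$ coming from orientation reversal.

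Finally I would check that these identifications are natural in $X_1$ and $X_3$, so that the isomorphism is one of bimodules (equivalently, of $2$-morphisms in $\Bimod$) and not merely objectwise; this naturality is inherited from the naturality already asserted in Lemma \ref{lemma:skfun-glues}, together with the fact that both sides of \eqref{eq:profunctor-composition} are manifestly functorial in the outer variables. The main obstacle — such as it is — is not any deep computation but rather the careful bookkeeping of orientations and of which copy of $\Sigma_2$ plays the role of $\Sigma_{gl}$ versus $\overline{\Sigma}_{gl}$ in Lemma \ref{lemma:skfun-glues}; once the dictionary between the ``glue two boundary copies'' operation and the ``compose along the middle surface'' operation is pinned down, the corollary is immediate from comparing the two coend expressions \eqref{eq:skfun-gluing} and \eqref{eq:profunctor-composition}. (If one wants the statement at the level of cocompleted categories and cocontinuous functors rather than bimodules, one additionally applies the $2$-functor $\widehat{(-)}:\Bimod\to\Pr^L$ from Section \ref{sec:various-cats}, which is monoidal for composition by construction, but this is not needed for the corollary as stated.)
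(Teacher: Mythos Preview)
Your proposal is correct and follows essentially the same approach as the paper: the paper simply remarks that the corollary follows by comparing the coend formula for gluing \eqref{eq:skfun-gluing} with the coend formula for composition of profunctors \eqref{eq:profunctor-composition}. Your write-up spells out the identifications (taking $M=M_{12}\sqcup M_{23}$, $\Sigma_{gl}=\Sigma_2$, $R=\overline{\Sigma}_1\sqcup\Sigma_3$) and the variance bookkeeping in more detail than the paper does, but the argument is the same.
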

\begin{corollary}\label{cor:SkCat-TFT}
  Together $\SkCat_\cB(-)$ and $\underline{\Sk}(-)$ define a contravariant functor from the category of oriented 2+1 collared cobordisms with smoothly embedded codimension-one defects to $\Bimod$.
  We will denote this categorified 2+1 TQFT by
  \begin{equation}
      \underline{\Sk}_{\cB} : \StratCob_{2+1}^{or} \to \Bimod
  \end{equation}
\end{corollary}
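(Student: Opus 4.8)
The plan is to check that the assignments $\Sigma \mapsto \SkCat_\cB(\Sigma)$ of Definition \ref{def:defect-skein-category} and $M \mapsto \underline{\Sk}(M)$ of Definition \ref{def:skein-bimodule-functor} organise into a contravariant pseudofunctor $\StratCob_{2+1}^{or}\to\Bimod$, i.e.\ the claimed categorified $2{+}1$ TQFT. Unwinding Definition \ref{def:bimod-cat}, the bimodule $\underline{\Sk}(M):\SkCat_\cB(\Sigma_{in})^{op}\times\SkCat_\cB(\Sigma_{out})\to\Vect$ attached to $M:\Sigma_{in}\to\Sigma_{out}$ is a $1$-morphism between $\SkCat_\cB(\Sigma_{in})$ and $\SkCat_\cB(\Sigma_{out})$ with the cobordism direction reversed, which accounts for the contravariance. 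Beyond these two assignments, one must supply (i) the action on $2$-morphisms, (ii) the compositor isomorphisms, (iii) the unitor isomorphisms, and (iv) the pseudofunctor coherence (pentagon and triangle).

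For (i): a diffeomorphism $\psi:M\xrightarrow{\sim}M'$ of bipartite cobordisms compatible with the boundary identifications pushes forward stratified $\cB$-coloured ribbon graphs to stratified $\cB$-coloured ribbon graphs, and since the defining relations of $\Sk_\cB$ (Definition \ref{def:rel-defect-skein-module}) are imposed inside embedded stratified cylinders, $\psi$ preserves them. Hence $\psi$ induces an isomorphism of relative skein modules, natural in the boundary labellings, i.e.\ a $2$-isomorphism $\underline{\Sk}(M)\cong\underline{\Sk}(M')$. That isotopic diffeomorphisms act identically is the collar-shift argument of \cite[Thm.~4.4.2]{Walker_2006} and \cite[Thm.~2.5]{GJS2021}: an isotopy rel boundary is realised by slides supported in balls, which act trivially on skeins; the only new point is that these balls may be chosen to respect the embedded defect surface $M_B$ together with its transversality to $\partial M$, which is automatic.

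For (ii) and (iii): the compositor is precisely the preceding Corollary, $\underline{\Sk}(M_{12}\cup_{\Sigma_2}M_{23})\cong\underline{\Sk}(M_{12})\circ\underline{\Sk}(M_{23})$, obtained by matching the coend in the gluing formula \eqref{eq:skfun-gluing} of Lemma \ref{lemma:skfun-glues} with the coend \eqref{eq:profunctor-composition} defining composition in $\Bimod$; naturality in the non-glued boundary is part of that lemma. For the unitor, the cylinder cobordism gives
\[
\underline{\Sk}(\Sigma\times[0,1])(X,Y)=\Sk_\cB(\Sigma\times[0,1],\overline{X}\sqcup Y)=\Hom_{\SkCat_\cB(\Sigma)}(X,Y),
\]
which is the Yoneda bimodule, i.e.\ the identity $1$-morphism on $\SkCat_\cB(\Sigma)$ in $\Bimod$; the unitor $2$-isomorphisms then arise from the collar-absorbing diffeomorphisms $M\cup_\Sigma(\Sigma\times[0,1])\xrightarrow{\sim}M$, reconciled with Lemma \ref{lemma:skfun-glues} exactly as in the defect-free case.

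For (iv): each associator and unitor is induced by a canonical rearrangement diffeomorphism of the glued $3$-manifold (regrouping collars), and by (i) the skein functor turns diffeomorphisms into isomorphisms depending only on the isotopy class; the pentagon and triangle diagrams therefore reduce to the statement that the corresponding diagrams of rearrangement diffeomorphisms commute up to isotopy rel boundary, which holds in $\StratCob_{2+1}^{or}$. I expect this last bookkeeping to be the main obstacle: not because it is conceptually deep, but because the precise management of collars is delicate. The cleanest route is to observe that everything required has already been verified in the defect-free setting of \cite{Walker_2006,GJS2021}, and that the present stratified generalisation needs only (a) the gluing of Lemma \ref{lemma:skfun-glues} and (b) the repeated observation that isotopies and rearrangement diffeomorphisms can be chosen to preserve the embedded defect surface together with its transverse meeting with the boundary; granting these, the coherence is inherited verbatim.
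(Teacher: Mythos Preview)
Your proposal is correct and follows the same essential approach as the paper: the functoriality is deduced from the gluing lemma (Lemma \ref{lemma:skfun-glues}) and the coend comparison with bimodule composition. The paper, however, gives no proof at all --- it states the corollary as immediate from the preceding material and the defect-free precedents \cite{Walker_2006,GJS2021} --- whereas you have carefully unpacked the full pseudofunctor data (action on $2$-morphisms, compositors, unitors, coherence), which is a genuine service to the reader even if the paper regards it as routine.
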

We rely heavily on Lemma \ref{lemma:skfun-glues} in our computation of the quantum A-polynomial, where we use the gluing properties of defect skein categories to decompose 3-manifolds into triangulations.

\subsection{Defect internal skein modules and algebras}\label{sec:internal-constructions}

We assume that our surfaces have non-empty boundary.
  
  \begin{definition}\label{def:disk-insertion}
    Let $\Sigma\cong \Sigma_A \cup_{\Gamma_B} \Sigma_C$ denote a stratified surface and $\G$ be a collection of disjoint intervals (called \emph{\bf gates}) along $\partial \Sigma$, also disjoint from the interface $\Gamma_B$.
    The inclusion $\mathcal{G} \times [0,1] \hookrightarrow \Sigma$ induces the \emph{\textbf{disk insertion functor}}:
    \begin{equation}
        \mathcal{P} : \SkCat(\mathcal{G} \times [0,1]) \to \SkCat(\Sigma)
    \end{equation}
    Suppose there are $n$ gates in the $A$ region, and $m$ in the $C$ region. Then $\SkCat(\mathcal{G} \times [0,1]) \cong \cA^{\boxtimes n} \boxtimes \cC^{\boxtimes m}$; we frequently consider $\mathcal{P}$ as a functor from the latter . Abusing notation slightly, we will use the shorthand $\mathcal{G} : = \cA^{\boxtimes n}\boxtimes \cC^{\boxtimes m}$.
  \end{definition}

    Note that $\SkCat(\mathcal{G}\times[0,1])$ carries a natural monoidal structure via concatenation in the $[0,1]$ direction; likewise the choice of gates equips $\SkCat(\Sigma)$ with the structure of a $\mathcal{G}$-module category.  Upon free cocompletion, the functor $\widehat{\mathcal{P}}$ induced by disk insertion has a right adjoint:
  \begin{equation}
    \widehat{\mathcal{P}}^R : \widehat{\SkCat}(\Sigma) \to \widehat{\SkCat}(\mathcal{G} \times [0,1]).
  \end{equation}
  The monad $\widehat{\mathcal{P}}^R \widehat{\mathcal{P}}$ of this adjuction is a $\mathcal{G}$-module endofunctor of $\widehat{\SkCat}(\mathcal{G}\times[0,1]) \cong \text{Fun}\big(\cA^{\boxtimes n}\boxtimes \cC^{\boxtimes m},\Vect\big)$. Evaluating the monad induced by disk insertion at the unit $\widehat{\idty} = \Hom(\idty,-)$ gives an algebra object in the free cocompletion of $\mathcal{G}=\cA^{\boxtimes n} \boxtimes \cC^{\boxtimes m}$.
  \begin{definition}\label{def:internal-skein-algebra}
   
   The \emph{\textbf{defect internal skein algebra}} is the algebra object:
\begin{equation}
 \SkAlg^{int}_\G (\Sigma) := \widehat{\mathcal{P}}^R\widehat{\mathcal{P}}(\widehat{\idty})\in\widehat{\mathcal{G}}.
\end{equation}
  \end{definition}

We note that in \cite{GJS2021}, the internal skein algebra was described by the equivalent data of a lax monoidal functor
  \begin{equation}\label{eq:int-skalg-as-functor}
  \begin{aligned}
            \SkAlg^\mathrm{int}_\G(\Sigma) : \G^{op} &\to \Vect\\
      V &\mapsto \Sk\big(\Sigma\times [0,1];\mathcal{P}(V),\varnothing\big).
  \end{aligned}
    \end{equation}
    The equivalence of these data follows from a standard identification between algebra objects in $\widehat{\G}$ and lax monoidal functors $\G^{op} \to \Vect$, and the short computation:
  \begin{equation}\label{eq:int-skein-evaluation}
      \begin{aligned}
          \SkAlg^{int}_\G(\Sigma)(V) &\cong \Hom_{\widehat{\G}}\left(\widehat{V}, \widehat{\mathcal{P}}^R\widehat{\mathcal{P}}(\widehat{\idty})\right) \cong \Hom_{\widehat{\SkCat}(\Sigma)}\left(\widehat{\mathcal{P}(V)},\widehat{\mathcal{P}(\idty)}\right) \\
          &\cong \Hom_{\SkCat(\Sigma)}\left(\mathcal{P}(V),\mathcal{P}(\idty)\right) = \Sk(\Sigma\times[0,1];\mathcal{P}(V)).
      \end{aligned}
  \end{equation}
  Here the first and third equivalence use the Yoneda lemma ($\Hom(\hat{c},F)\simeq F(c)$) while the second follows from the definition of an adjoint together with $\widehat{\mathcal{P}}(\widehat{W}) \cong \widehat{\mathcal{P}(W)}$.

    \begin{definition}\label{def:internal-skein-bimodule}
    Let $M = M_A \cup_{\Sigma_B} M_C$ be a stratified 3-manifold. Fix an identification $\partial M = \Sigma \cup_{\Gamma} R$. For the purpose of disk insertion we treat $\Gamma$ as the boundary of $\partial M$.
    Fix gates $\mathcal{G}\times [0,1] \hookrightarrow \Sigma$ with associated disk insertion functor $\P$.
      Recall the shorthand $\G := \SkCat(\mathcal{G}\times[0,1])$.
      The \emph{\textbf{defect internal skein module }}         $\Sk^{\text{int}}(M) \in \widehat{ \G}$
    given by
    \begin{equation}
      V \mapsto \Sk\big(M;\mathcal{P}(V)\big).
    \end{equation}
    \end{definition}
    
    This is a $\SkAlg^{int}(\Sigma)$ module internal to $\widehat{\G}$, meaning we have an associative multiplication morphism $\SkAlg^{int}(\Sigma) \otimes \Sk^{int}(M) \to \Sk^{int}(M)$ in $\widehat{\G}$.
    By \eqref{eq:int-skein-evaluation}, we have an isomorphism $\SkAlg^{int}(\Sigma) \cong \Sk^{int}(\Sigma\times [0,1])$.

    Internal skein modules and algebras have good gluing properties, which will power our computations in Section \ref{sec:quantum-A-polynomial}. Defects play a minor role in this story, which appears in \cite{GJS2021}.

  \begin{lemma}\label{lemma:gluing-for-int-skein-modules}
      Let $M$ be an oriented stratified 3-manifold with a stratification $M \cong M_A \cup_{S_B} M_C$.
      Fix a decomposition $\partial M \cong \Sigma_{gl} \cup \overline{\Sigma}_{gl} \cup R$ and let $M_{gl}$ denote $M$ with the two copies of $\Sigma_{gl}$ identified.
      Let $\G_{gl}$ denote the gate category on $\Sigma_{gl}$ and $\G_R$ that on $R$.
      The internal skein modules of $M$ and $M_{gl}$ are related as follows:
      \begin{equation}
          \Sk^{int}(M_{gl})(-) \cong \int^{V \in \G_{gl}}\hspace{-2em} \Sk^{int}(M)(V\boxtimes V^* \boxtimes -)
      \end{equation}
  \end{lemma}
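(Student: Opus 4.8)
The plan is to deduce this from the gluing property of ordinary (non-internal) defect skein modules, Lemma \ref{lemma:skfun-glues}, by inserting the disk-insertion boundary conditions on both sides and then transporting the coend from the full skein category $\SkCat(\Sigma_{gl})$ onto the gate category $\G_{gl}$. This runs parallel to the defect-free statement recorded in \cite{GJS2021}, so the genuinely new content is just to check that the codimension-one defect does not obstruct the argument, which is ensured by the standing hypotheses that gates avoid the interface (Definition \ref{def:disk-insertion}) and that the interface is transverse to $\partial M$ (Definition \ref{def:bipartite-3-manifold}).

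First I would unwind the definitions. Writing $\P_R$ and $\P_{gl}$ for disk insertion at the gates on $R$ and on $\Sigma_{gl}$, Definition \ref{def:internal-skein-bimodule} gives $\Sk^{int}(M_{gl})(W) = \Sk\bigl(M_{gl};\P_R(W)\bigr)$ for $W\in\G_R$; since the $R$-gates are disjoint from $\Sigma_{gl}$, this boundary condition survives the gluing untouched. Applying Lemma \ref{lemma:skfun-glues} along the identification of $\Sigma_{gl}$ with $\overline{\Sigma}_{gl}$ then yields a natural isomorphism
\[
\Sk^{int}(M_{gl})(W)\;\cong\;\int^{X\in\SkCat(\Sigma_{gl})}\Sk\bigl(M;\overline{X}\sqcup X\sqcup\P_R(W)\bigr),
\]
and it remains to recognise the right-hand side as $\int^{V\in\G_{gl}}\Sk\bigl(M;\overline{\P_{gl}(V)}\sqcup\P_{gl}(V)\sqcup\P_R(W)\bigr)$, which by Definition \ref{def:internal-skein-bimodule} is exactly $\int^{V\in\G_{gl}}\Sk^{int}(M)(V\boxtimes V^*\boxtimes W)$; here one uses that orientation reversal of a bipartite surface carries a gate label to its dual, so the conjugate labeling $\overline{\P_{gl}(V)}$ on $\overline{\Sigma}_{gl}$ is disk insertion of $V^*$ (and $V^*$ makes sense because $\G_{gl}$ is a product of the rigid categories $\cA,\cC$).

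The substance of the proof is this last identification of coends. Its topological underpinning is the claim that every stratified ribbon graph in $M$ meeting $\Sigma_{gl}$ can be moved, by a stratified isotopy supported in a collar of $\Sigma_{gl}$ (hence inducing an isomorphism of relative skein modules), so that all of its endpoints lie in the gate region; because the gates avoid the interface and the interface meets $\partial M$ transversely, the stratified isotopy extension theorem provides such isotopies in the complement of the defect. In categorical terms this makes disk insertion $\P_{gl}:\G_{gl}\to\SkCat(\Sigma_{gl})$ essentially surjective, and moreover it controls how $\underline{\Sk}(M)$ depends on a boundary condition placed on $\Sigma_{gl}$, so that the coend over $\SkCat(\Sigma_{gl})$ can be rewritten over $\G_{gl}$ by a co-Yoneda manipulation — the same one that, in \cite{GJS2021}, relates profunctor composition \eqref{eq:profunctor-composition} to the gluing coend \eqref{eq:skfun-gluing}. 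One then checks naturality in $W\in\G_R$, which is automatic, and iterates (using Fubini for coends) if there are several gluing surfaces or gates.

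I expect the coend transport to be the main obstacle. The delicate point is that $\SkCat(\Sigma_{gl})$ carries morphisms not present in $\G_{gl}$ — skeins winding through the topology of $\Sigma_{gl}$ — and one must verify that in the coend these are precisely the data that reassemble into genuine skeins of the glued manifold $M_{gl}$, rather than imposing spurious relations; arranging the ``push the endpoints to a gate'' isotopies to be simultaneously stratified and coherent enough to yield an equivalence of functors (not merely a pointwise isomorphism) is where the care is needed. The cleanest route is probably to phrase the topological input, as in \cite{GJS2021}, as a statement purely about disk insertion and then let formal coend calculus do the rest; at that level the defect genuinely plays only a bookkeeping role, since all isotopies used can be taken in the complement of the interface $\Gamma_B$.
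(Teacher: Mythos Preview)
Your proposal follows the same route as the paper's proof: write $\Sk^{int}(M_{gl}) = \Sk(M_{gl})\circ\P_R$, apply Lemma~\ref{lemma:skfun-glues} to obtain a coend over $\SkCat(\Sigma_{gl})$, then pull the coend back along $\P_{gl}$ to $\G_{gl}$. The paper justifies that last step in a single line by essential surjectivity of $\P_{gl}$ (``every $X$ in $\SkCat(\Sigma_{gl})$ is isomorphic to some $\P_{gl}(V)$''), without dwelling on the extra-morphisms concern you raise.
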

 
  \begin{proof}
      Let $\P_R : \G_R \to \SkCat(R)$, $\P_{gl} : \G_{gl} \to \SkCat(\Sigma_{gl})$ and $\P : \G \to \SkCat(\partial M)$ denote the disk insertion functors.
      Note that $\Sk^{int}(N) = \Sk(N) \circ \P_{\partial N}$ as functors. 
      Then
      \begin{equation}
          \begin{aligned}
              \Sk^{int}(M_{gl}) &= \Sk(M_{gl})\circ \P_{R} \cong \int^{X\in \SkCat(\Sigma_{gl})}\hspace{-5em} \Sk(M)(X \sqcup \overline{X} \sqcup \P_{R}(-)) \\
              &\cong \int^{V\in \G_{gl}}\hspace{-2em} \Sk(M)(\P_{gl}(V)\sqcup \overline{\P_{gl}}(V) \sqcup \P_R(-)) \\
              &= \int^{V\in \G_{gl}}\hspace{-2em} \Sk(M)(\P(V\boxtimes V^* \boxtimes -) = \int^{V\in \G_{gl}}\hspace{-2em} \Sk(M)^{int}(V\boxtimes V^* \boxtimes -).
          \end{aligned}
      \end{equation}
      The coend on the first line is a result of Lemma \ref{lemma:skfun-glues}, while the equivalence in the second line follows from the fact that every $X$ in $\SkCat(\Sigma_{gl})$ is isomorphic to some $\P_{gl}(V)$.
  \end{proof}
  
\subsection{Gates}
    Gates are the crucial feature of internal skein algebras and modules.
In the closely related stated skein model \cite{Costantino_Le_2022} skeins are allowed to end on certain fixed boundary components, where they are labelled (by \emph{states}) and subject to extra skein relations.
This same behaviour appears for internal skein algebras and modules.
To see how, we will need the following standard lemma:
\begin{lemma}[co-Yoneda]\label{lemma:co-yoneda}
    Fix a functor $F : \cC^{op} \to \Vect$. Then
    \begin{equation}
        F \simeq \int^{c\in\cC} \Hom_\cC(-,c)\otimes F(c).
    \end{equation}
\end{lemma}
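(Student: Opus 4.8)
The statement to prove is the co-Yoneda lemma: for $F:\cC^{op}\to\Vect$, we have $F\simeq\int^{c\in\cC}\Hom_\cC(-,c)\otimes F(c)$.

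\textbf{Approach.} The plan is to verify the universal property of the coend directly, or equivalently to exhibit the natural isomorphism objectwise and check naturality. I would proceed objectwise: fix $d\in\cC$ and compute the coend $\int^{c}\Hom_\cC(d,c)\otimes F(c)$ as a colimit over the twisted arrow category (or the coequalizer presentation \eqref{eq:coend-colimit}), producing a candidate map to $F(d)$ and a candidate inverse, then check these are mutually inverse and natural in $d$.

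\textbf{Key steps.} First I would recall that by definition $\int^{c}\Hom_\cC(d,c)\otimes F(c)$ is the coequalizer of the two maps $\bigoplus_{f:c\to c'}\Hom_\cC(d,c)\otimes F(c')\rightrightarrows\bigoplus_{c}\Hom_\cC(d,c)\otimes F(c)$, one induced by postcomposition $f\circ(-)$ on $\Hom_\cC(d,c)$ and the other by $F(f)$ on $F(c')$. Second, I would construct the comparison map $\varepsilon_d:\int^{c}\Hom_\cC(d,c)\otimes F(c)\to F(d)$ out of the cocone whose component at $c$ is $\Hom_\cC(d,c)\otimes F(c)\to F(d)$, $g\otimes x\mapsto F(g)(x)$; one checks this family coequalizes the parallel pair using functoriality of $F$, so it factors through the coend. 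Third, I would construct the inverse as the composite $F(d)\cong\K\otimes F(d)\xrightarrow{\id_d\otimes\,\id}\Hom_\cC(d,d)\otimes F(d)\to\int^{c}\Hom_\cC(d,c)\otimes F(c)$, i.e. insert the identity morphism at the object $c=d$. Fourth, one composite is immediately $\id_{F(d)}$ since $F(\id_d)=\id$; the other composite is the identity on the coend because for any $g\otimes x$ in the $c$-component, the coend relation applied to the morphism $g:d\to c$ (with $d$ in place of $c$, $c$ in place of $c'$) identifies $g\otimes x$ in the $c$-component with $\id_d\otimes F(g)(x)$ in the $d$-component — this is exactly the image of $\varepsilon_d(g\otimes x)=F(g)(x)$ under the inverse map. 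Finally, naturality in $d$ follows because both $\varepsilon_d$ and its inverse are built from $\Hom_\cC(-,c)$ and the composition/evaluation structure, all of which are manifestly natural; alternatively one invokes the Yoneda-style argument that $\Hom(\int^{c}\Hom_\cC(-,c)\otimes F(c),\,G)\cong\int_{c}\Hom(\Hom_\cC(-,c)\otimes F(c),G)\cong\int_c\Hom(F(c),G(c))\cong\Nat(F,G)\cong\Hom(F,G)$ naturally in $G$, then conclude by Yoneda.

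\textbf{Main obstacle.} There is no deep obstacle here — this is a standard lemma, cf. \cite{MacLane1978}. The only point requiring genuine care is the verification in step four that the second composite is the identity on the coend: this is precisely where the coend relations \eqref{eq:coend-colimit} are used, and one must be attentive that the relevant relation is the one indexed by the morphism $g$ itself rather than by a generic morphism, and that the $\K$-linear (rather than set-level) bookkeeping of the $\otimes$ goes through. For this reason I would likely just cite \cite[IX.5]{MacLane1978} and include only the one-line indication that the isomorphism is given by $g\otimes x\mapsto F(g)(x)$ with inverse $x\mapsto \id_d\otimes x$.
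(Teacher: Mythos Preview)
Your proposal is correct and entirely standard; the paper itself states this lemma without proof, treating it as a well-known fact (it is the density/co-Yoneda formula, cf.\ \cite[IX.5--6]{MacLane1978}). Your suggested one-line indication --- that the isomorphism is $g\otimes x\mapsto F(g)(x)$ with inverse $x\mapsto \id_d\otimes x$ --- is exactly the right thing to include if any justification is given at all.
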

Applying Lemma \ref{lemma:co-yoneda} to the internal skein algebra, we get
\begin{equation}\label{eq:internal-coend-expression}
    \Sk^{int}(M) \simeq \int^{V \in \G} \hat{V} \otimes \Sk^{int}(M)(V) = \int^{V \in \G} \hat{V} \otimes \Sk(M; \P(V)).
\end{equation}
Where the second equivalence is by definition.
Unpacking the definition of the coend, this formula states that the internal skein module is spanned by skeins in $M$ which may end in $\partial M$ at a gate with some label $\P(V)$, stated by an $v \in \hat{V}$. 

These are taken up to the coend equivalence relation \eqref{eq:coend-colimit}, 
which implies that a skein with a coupon $f : V \to W$ near a gate stated by some element $v\in V$ is equivalent to the otherwise-unchanged skein without that coupon and with state $f(v)\in V$, see Figure \ref{fig:internal-skein-details}.

\begin{figure}
    \centering
        \raisebox{3mm}{\includegraphics{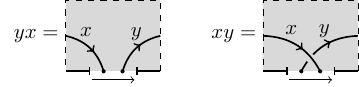}}\hspace{20mm}\includegraphics[]{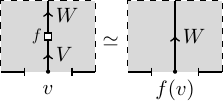}
    \caption{\textbf{Left:} Products of skein are read right to left -- the leftmost skein is the lowest and is first according to the orientation at the gate. \textbf{Right:} Internal skeins are stated where they meet a gate, and coupons can be absorbed into gates due to the coend expression \eqref{eq:internal-coend-expression} for internal skeins.}
    \label{fig:internal-skein-details}
\end{figure}

Because labellings of the form $\P(V)$ have marked points only immediately adjacent to gates, we will draw skeins as starting and ending on the gates themselves.
When multiple edges of a skein meet at a single gate in a thickened surface, their relative heights are determined according to the orientation of the boundary component.
See Figure \ref{fig:internal-skein-details}.
    
    \paragraph{Changing gates and closing punctures}
  For computations it is sometimes practical to change the number of gates used in the construction of internal skein algebras and modules.
  By \emph{closing a gate} we will mean taking invariants of the associated disk insertion action.
  On the level of the internal skein algebra, this is restriction to an invariant sub-algebra generated by those skeins which do not have an endpoint at the specified gate.

  By \emph{opening a gate} we will mean passing to a larger algebra in which skeins are allowed to end at an additional gate.
  In practice we will often set up our marked surfaces to have sufficient gates for gluing operations, then we will close all gates near the end of a computation to arrive at the (non-internal) skein algebra.

Going further, it is sometimes necessary to puncture a surface so that it has sufficient boundary components for monadic reconstruction.
In the language of \cite[Sec. 1.3]{JLSS2021}, this is the need for a $\G$-chart.
  Let $\Sigma^* := \Sigma \setminus \DD$ be a stratified punctured surface.
  Let $\G_p$ denote the gates on the boundary component associated to the puncture and $\G_r$ the gates on the rest of the boundary.
  \emph{Quantum Hamiltonian reduction} allows us to pass from $\SkAlg^{int}_{\G_p \cup \G_r}(\Sigma^*)$ to $\SkAlg^{int}_{\G_r}(\Sigma)$, or if $\G_r = \varnothing$, to $\SkAlg(\Sigma)$.
  This two step process is used extensively in \cite{GJS2021}, where the unstratified surfaces need at most one puncture.
    In the first step we restrict to the invariant sub-algebra generated by skeins which do not meet the gates at the puncture:
    \begin{equation}
        \SkAlg^{int}_{\G_p\cup \G_r}(\Sigma^*)^{\G_p}\subset \SkAlg^{int}_{\G_p\cup \G_r}(\Sigma^*)
    \end{equation}
    In the second step we quotient by the relation that fixes the monodromy around the puncture.
    To be precise, this means specializing the skein parallel and immediately adjacent to the puncture's boundary component equal to the appropriate quantum dimension.

\paragraph{Gates and framing}
Gates are the algebraic analogue of trivialising a local system at a marked point. This trivialisation is often called a \emph{framing}.
As we'll now see, this analogy becomes precise when we use the classical $\Rep G$ as our ribbon category.

Recall that ``opening gates'' on a surface $\Sigma$ means specifying an embedding $\DD^{\sqcup n} \sqcup \Sigma \hookrightarrow \Sigma$ and using the induced module category structure
\begin{equation}\label{eq:disk-embedding}
      \SkCat(\DD)^n \times \SkCat(\Sigma) \to \SkCat(\Sigma)
\end{equation}
 to define an algebra object $\SkAlg^{int}(\Sigma)$ in the monoidal category $\SkCat(\DD)^n$.
 Note that such an embedding only exists if $\partial\Sigma \neq \varnothing$.
 
 Assume for exposition's sake that $n=1$ and that our gate is in a $G$-region.
In the classical setting, an embedding $\DD \hookrightarrow \Sigma$ induces a restriction map $\rho: \Ch^{dec}(\Sigma) \to \Ch_G(\DD)$ whose pullback functor \begin{equation}
    \rho^* : \QCoh(\Ch_G(\DD)) \to \QCoh (\Ch^{dec}(\Sigma))
\end{equation}
is the geometric incarnation of \eqref{eq:disk-embedding}.
Note that the $G$-character stack of a disk is $\bullet/G$, so that $\SkCat(\DD_G) \simeq \QCoh\left(\bullet/G \right) \simeq \Rep G$.
(Similarly $\SkCat(\DD_T) \simeq \QCoh\left(\bullet/T \right) \simeq \Rep T$.)

Since the restriction map is quasi-compact and quasi-separated (both follow from its codomain being $\bullet/G$), the right adjoint of $\rho^*$ is the pushforward functor $\rho_* : \QCoh(\Ch^{dec}(\Sigma)) \to \Rep G$. (See, e.g. \cite[Prop. 14.5.7]{Vakil}.)

We therefore construct the internal skein module by evaluating the monad $\rho_*\rho^* : \Rep G \to \Rep G$ on the monoidal unit, i.e. the structure sheaf on $\mathcal{O}_{\bullet/G}$:
\begin{equation}
    \SkAlg^{int}_{\Rep G} (\Sigma) := \rho_*\rho^*(\O_{\bullet/G}) = \rho_*\O_{\Ch^{dec}(\Sigma)}
\end{equation}
For $\mathscr{F}$ a quasi-coherent sheaf on $\Ch^{dec}(\Sigma)$, the pushforward $\rho_*\mathscr{F} \in \Rep G$ is the vector space of global sections $\mathscr{F}(\Ch^{dec}(\Sigma))$ endowed with the structure of a $G$-representation.
We conclude that the internal skein algebra is the ring of regular functions $\mathcal{O}(\Ch^{dec}(\Sigma))$ endowed with an extra $G$-action, i.e. a framing.

  \section{The parabolic defect}\label{sec:parabolic-defect}

This section is dedicated to constructing our main example of a local coefficient system, built from the representation theory of the quantum group, its Borel subalgebra, and the universal Cartan sub-quotient.  While our detailed computations in this paper are for $\SL_2$, the basic definitions make sense for any reductive group, and so we recall them in that generality.  We refrain from giving detailed presentations of various quantum groups we consider, for which we refer the reader to standard textbooks such as \cite{Kassel_1995}, \cite{Chari_1994}, \cite{Jantzen_1996}, or \cite{Klimyk_Schmudgen_1997}.

Let $G$ be a reductive group, with fixed Borel subgroup $i : B \hookrightarrow G$, and its universal Cartan quotient $\pi : B \to T := B /(B,B)$.\footnote{%
  It is important that $T$ is regarded as a quotient of $B$ rather than a subgroup of $G$.
}  We let $\Lambda=\Hom(T,\mathbb{C}^\times)$ denote the weight lattice, and let $E_\lambda,F_\lambda,K_\lambda$ denote the Serre generators of $U_q\mathfrak{g}$.  Given fundamental weights $\lambda_1,\ldots,\lambda_r$, we'll use the shorthand $X_{\lambda_i} = X_i$.
We identify $U_q \mathfrak{b} \subset U_q \mathfrak{g}$ as the subalgebra generated by the $E_\lambda$ and $K_\lambda$.
The projection $\pi : U_q\mathfrak{b} \to U_q\mathfrak{t}$ is given by $E_\lambda \mapsto 0$, $K_\lambda \mapsto K_\lambda$.

We denote by $\Rep_qG$ be the ribbon category of finite dimensional representations of $U_q\mathfrak{g}$. 
For generic values of $q$, this is a semisimple category with simple objects $V(\lambda)$ indexed by dominant integral weights $\lambda\in\Lambda^+$.

We denote by $\Rep_qT$ be the category of locally finite dimensional $U_q\mathfrak{t}$-modules spanned by weight vectors $v_\mu$ on which the $K_i$ act by the $i$th fundamental weight $K_i \cdot v_j = q^{\langle \lambda_i,\mu\rangle}v_j.$  For generic $q$ this is a semisimple category with simple objects indexed by characters $\mu\in\Lambda$.

Finally, we denote by $\Rep_qB$ the category of \emph{locally finite} $U_q\mathfrak{b}$-modules spanned by weight vectors (for all generators $K_i$) as above.  For generic $q$, the category is not semi-simple.  Simple objects of $\Rep_qB$ are all obtained by pulling back one-dimensional representations via the homomorphism $\pi: U_q(\mathfrak{b})\to U_q(\mathfrak{t})$ which sends each $E_\lambda\mapsto 0$.  In particular, the simple objects are indexed by $\mu\in\Lambda$.  A typical example of an indecomposable object is $i^*(V)$, for $V$ a simple object of $\Rep_qG$.  Here $i$ denotes the inclusion $i: U_q(\mathfrak{b})\to U_q(\mathfrak{g})$.

The category $\Rep_qB$ carries a canonical structure of a $(\Rep_qG, \Rep_gT)$-central tensor category, via the functor:
\begin{align*}
  \Rep_q G \otimes \Rep_q T^{op} & \to Z(\Rep_q B)\\
  V\boxtimes \chi & \mapsto \big(i^*(V) \otimes \pi^*(\chi), c_{(V\boxtimes\chi,\cdot)}\big).
\end{align*}
The half braiding $c_{(V\boxtimes\chi,\cdot)}$ for the central structure is defined as follows.
Let $\mathcal{R}^G, \mathcal{R}^T$ denote the $\mathcal{R}$-matrices defining the braiding for $\Rep_qG$ and $\Rep_qT$, then the half-braiding is given by:
\begin{equation*}
  \begin{tikzcd}
    c_{(V\boxtimes \chi,W)}^{}: \big(i^*(V) \otimes \pi^*(\chi)\big) \otimes W \ar[r,|->,"\sigma_{(123)} \mathcal{R}^G_{13} (\mathcal{R}^T_{32})^{-1}"] &[2cm] W\otimes \big(i^*(V) \otimes \pi^*(\chi)\big).
  \end{tikzcd}
\end{equation*}

The following lemma follows from a direct computation and is responsible for the relations shown in Figure \ref{fig:kauffman-dips}.

\begin{lemma}\label{lemma:defect-skein-unique}
    Fix a dominant weight $\lambda\in\Lambda^+$, and a character $\mu\in\Lambda$, let $V(\lambda)$ and $\mu$ denote the corresponding representations in $\Rep_qG$ and $\Rep_qT$, respectively.
    Let $X_{\lambda}$ (resp. $X_{\mu}$) be the $\Rep_qB$ labelling of $\DD_B$ with a single positively oriented point in the $\Rep_qG$ (resp. $\Rep_qT$) region labelled by $V(\lambda)$ (resp. $\mu$).  Then we have:
    \[\Hom_{\SkCat_{\Rep_q(B)}(\DD_B)}(X_\lambda,X_\mu) = \Hom_{\SkCat^\pitchfork_{\Rep_q(B)}(\DD_B)}(X_\lambda,X_\mu) = \left\{ \begin{array}{ll} \K,& \textrm{if $\lambda=\mu$,}\\ 0,& \textrm {else.}\end{array}
    \right.
    \]
\end{lemma}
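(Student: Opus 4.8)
The plan is to reduce the computation of $\Hom_{\SkCat_{\Rep_q B}(\DD_B)}(X_\lambda, X_\mu)$, via Lemma~\ref{lemma:defect-disk-gives-B}, to a computation in the central tensor category $\Rep_q B$ itself. Indeed, by that lemma $\SkCat_{\Rep_q B}(\DD_B)\simeq \Rep_q B$ as pivotal $(\Rep_q G, \Rep_q T)$-central categories, and under this equivalence the labelling $X_\lambda$ (a single point in the $G$-region coloured by $V(\lambda)$) corresponds to the object $i^*(V(\lambda))$, while $X_\mu$ corresponds to $\pi^*(\mu)$; here I use that the central structure functor $H$ sends $V\boxtimes\chi$ to $i^*(V)\otimes\pi^*(\chi)$ and that the trivial object plays the role of the empty label. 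So the claim becomes the purely representation-theoretic statement
\[
\Hom_{\Rep_q B}\big(i^*(V(\lambda)),\, \pi^*(\mu)\big) = \begin{cases}\K,& \lambda = \mu,\\ 0,& \text{else.}\end{cases}
\]

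To prove this I would argue as follows. First, $\pi^*(\mu)$ is the one-dimensional simple $U_q\mathfrak b$-module on which each $E_i$ acts by $0$ and $K_i$ acts by $q^{\langle\lambda_i,\mu\rangle}$. A $U_q\mathfrak b$-module homomorphism $f\colon i^*(V(\lambda))\to\pi^*(\mu)$ is the same as a linear functional on $V(\lambda)$ that (i) is supported on the $\mu$-weight space $V(\lambda)_\mu$ (by $K_i$-equivariance) and (ii) kills $E_i\cdot V(\lambda)$ for all $i$ (by $E_i$-equivariance, using that $E_i$ acts by zero on the target). Condition (ii) says $f$ factors through the coinvariants $V(\lambda)\big/\sum_i E_i V(\lambda)$, i.e. through the top of $V(\lambda)$ as a $U_q\mathfrak n^-$-generated... more precisely, dualizing, $\Hom_{U_q\mathfrak b}(i^*V(\lambda),\pi^*\mu)$ is identified with the $\mu$-weight component of the space of highest-weight vectors (annihilated by all $E_i$) in the restricted dual $V(\lambda)^\vee$. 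Since $V(\lambda)^\vee\cong V(-w_0\lambda)$ as a $U_q\mathfrak g$-module and has a unique (up to scalar) highest weight vector, of weight $-w_0\lambda$, and since $\mu$ ranges over arbitrary characters while $-w_0\lambda$ is dominant, I must be slightly careful about normalization; the cleanest route is to observe directly that the space of $E_i$-highest vectors in $V(\lambda)$ of a given weight $\nu$ is $\K$ if $\nu=\lambda$ and $0$ otherwise (this is just the statement that $V(\lambda)$ is a highest-weight module with one-dimensional highest weight space and no other highest weight vectors, valid for generic $q$), and then transport this through the evaluation pairing to get the assertion with $\lambda=\mu$.

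Second, for the transverse hom-space: by Lemma~\ref{lemma:defect-skein-unique}... (it is the statement we are proving, so I instead invoke the remark preceding the definition of $\Rib^\pitchfork_\cB$, namely that $\Rep_q B$ is generated under colimits by the image of the central structure). Concretely, the inclusion $\SkCat^\pitchfork_{\Rep_q B}(\DD_B)\hookrightarrow \SkCat_{\Rep_q B}(\DD_B)$ is fully faithful on the objects $X_\lambda, X_\mu$ because any skein between such transverse labellings, after projecting to the defect, is already a planar diagram built from the half-braiding and coupons, with no point sitting on the defect — there is simply no room for a non-transverse configuration to arise as an extra morphism between these particular objects. Thus the two Hom-spaces coincide, and the computation above finishes both.

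The main obstacle I anticipate is the bookkeeping in the second paragraph: getting the duals, the $w_0$-twist, and the generic-$q$ semisimplicity hypotheses to line up so that the answer is exactly $\delta_{\lambda\mu}\cdot\K$ rather than, say, $\delta_{\lambda, -w_0\mu}$. This is where one must be careful about whether $i^*$ or its adjoint is in play and about the precise identification of $X_\lambda$ with $i^*(V(\lambda))$ versus its dual under the pivotal structure of Definition-Proposition~\ref{lemma:defect-ribbon-properties}. Everything else — the reduction via Lemma~\ref{lemma:defect-disk-gives-B}, the factoring through coinvariants, the equality of transverse and full Hom — is routine once the conventions are fixed.
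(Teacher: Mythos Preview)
Your approach is exactly what the paper has in mind: the paper only says the lemma ``follows from a direct computation,'' and the direct computation is precisely the reduction, via Lemma~\ref{lemma:defect-disk-gives-B}, to a Hom in $\Rep_qB$, followed by the highest-weight argument.

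One concrete correction, confirming your own worry about the bookkeeping. As you set it up, $\Hom_{U_q\mathfrak b}\bigl(i^*V(\lambda),\pi^*\mu\bigr)$ is the weight-$\mu$ piece of the $U_q\mathfrak n^+$-\emph{coinvariants} $V(\lambda)\big/\sum_i E_iV(\lambda)$; this is one-dimensional exactly at the \emph{lowest} weight $w_0\lambda$, not at $\lambda$. (Your ``transport through the evaluation pairing'' does not fix this: dualizing gives $\Hom_B(\pi^*(-\mu), i^*V(\lambda)^*)$, and for $\SL_2$ with $V(\lambda)^*\cong V(\lambda)$ you land at $\mu=-\lambda$.) The computation that actually yields $\delta_{\lambda,\mu}$ is the one in the \emph{other} direction,
\[
\Hom_{\Rep_qB}\bigl(\pi^*\mu,\, i^*V(\lambda)\bigr)\;=\;\{\,v\in V(\lambda): E_iv=0,\ K_iv=q^{\langle\lambda_i,\mu\rangle}v\,\},
\]
which is the space of highest-weight vectors of weight $\mu$ in $V(\lambda)$, hence $\K$ iff $\mu=\lambda$. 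This is exactly the Hom-space the paper records in the caption to Figure~\ref{fig:passing-through-defect}, where the coupon is an element of $\Hom_{\Rep_qB}(\operatorname{Ind}_T^B\chi,\Res_B^GV)$. So the substantive computation is the one you call the ``cleanest route''; what needs to be pinned down is only which way the equivalence $\SkCat_{\Rep_qB}(\DD_B)\simeq\Rep_qB$ sends the particular morphism $X_\lambda\to X_\mu$, and the paper's conventions (via that figure) identify it with $\Hom_B(\pi^*\mu,i^*V(\lambda))$.

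Your argument that the transverse and full Hom agree is fine: both objects $X_\lambda,X_\mu$ lie in the transverse subcategory, and the equivalence $\RT_\cB$ is induced by evaluating a single coupon on the defect, which is already a transverse skein; since the inclusion $\SkCat^\pitchfork\hookrightarrow\SkCat$ is faithful, this gives equality of Hom-spaces.
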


The significance of the lemma is that locally, if we restrict the labels of our skeins to simple objects of $\Rep_qT$ and $\Rep_q G$ in the $T$- and $G$- regions of the surface -- which we may do with out loss of generality, by standard arguments -- then the coupons labelling where the skein passes through the defect are unique up to a scalar. 
In the case of $G=\SL_2$, this leads to the following Kauffman-style relations.

\begin{lemma}[Kauffmann--M\"uller type presentation of parabolic defect skeins for $\SL_2$]\label{lemma:kauffman-presentation}
The parabolic defect skein module of a bipartite 3-manifold is the $\K$-vector space spanned by bipartite ribbon tangles in $M$.
Edges are unoriented in the $G$ region and oriented in the $T$ region.

At $G$-gates the skeins have a state $+$ or $-$, at $T$-gates they're unstated.

These are and subject to the local skein relations shown in Figures \ref{fig:kauffman-crossings} -- \ref{fig:kauffman-gates}.
\begin{figure}
    \centering
    \includegraphics[]{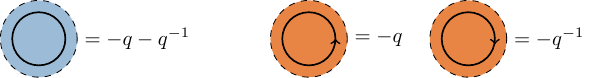}\hspace{8mm}\includegraphics[]{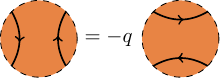}\\[1em]
    \includegraphics[]{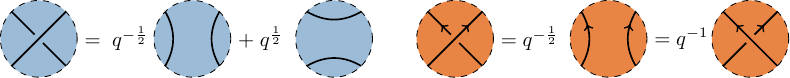}
      \caption{Relations for the parabolic defect skein algebra which don't involve gates or the defect. The blue disk \textbf{(left)} shows the $\SL_2\C$ skein relations, using the canonical skew-symmetric self-duality of the fundamental $\SL_2\CC$ representation. The orange disk \textbf{(right)} shows the $\CC^*$ skein relations.}\label{fig:kauffman-crossings}\medskip

    \includegraphics[]{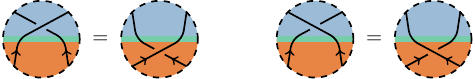}
      \caption{Crossings can be passed through the defect via (stratified) isotopy. This holds for general defect skein theories, not just the parabolic defect.}\label{fig:kauffman-defect-crossing}\medskip

    \includegraphics[]{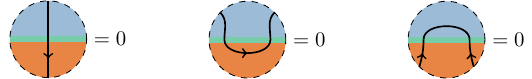}
    \caption{\textbf{Left}: A $T$-skein pointing out of a defect is trivial. \textbf{Middle, right:} Trivial ``dips'' -- these follow from Lemma \ref{lemma:defect-skein-unique}. Note that this does not imply triviality of general skeins crossing the defect, see Figure \ref{fig:kauffman-nontrivial}.}
    \label{fig:kauffman-dips}\medskip

     \includegraphics[]{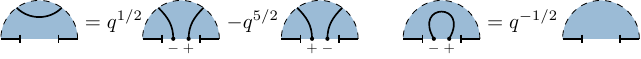}\hspace{8mm}\includegraphics[]{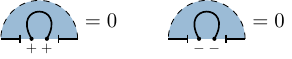}
    \caption{Relations for $G$-region skeins near a gate. Here we follow the ``stated skeins" convention, denoting by $+$ and $-$ the normalised highest and lowest weight vectors, respectively, of the fundamental representation $V$ of $\SL_2$. }\label{fig:kauffman-G-gates}
    \smallskip
    
    \includegraphics[]{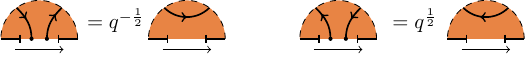}
    \caption{Relations for $T$-region skeins near a gate. The arrow below the gate indicates the orientation of the boundary component. See Figure \ref{fig:T-region-commutation-relations} for additional relations near a gate.}
    \label{fig:kauffman-gates}
\end{figure}
\end{lemma}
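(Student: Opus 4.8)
The plan is to identify the parabolic defect skein module with a presentation by generators (bipartite ribbon tangles, possibly stated at gates) and relations (Figures \ref{fig:kauffman-crossings}--\ref{fig:kauffman-gates}), by comparing both sides with the abstract skein module $\Sk_{\Rep_qB}(M)$ of Definition \ref{def:rel-defect-skein-module}. The strategy is the standard one for Kauffman/M\"uller-type presentations: first reduce colours, then exhibit each listed relation as an instance of the defining skein relations, and finally show that no further relations are needed, i.e.\ that the listed relations already generate the full skein ideal. I would organise the argument in four steps.

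\emph{Step 1: Colour reduction.} Since $\Rep_q\SL_2$ and $\Rep_qT=\Rep_q\C^\times$ are semisimple (for generic $q$), any skein coloured by a general object decomposes as a finite sum of skeins coloured by simple objects, by inserting the appropriate idempotents; this is the ``standard argument'' alluded to after Lemma \ref{lemma:defect-skein-unique}. Moreover, every simple of $\Rep_q\SL_2$ is a summand of a tensor power of the fundamental representation $V$, and every simple of $\Rep_q\C^\times$ is a tensor power of the weight $+1$ (or $-1$) generator, so every skein is a linear combination of \emph{cabled} tangles coloured only by $V$ (in the $G$-region) and by the weight $\pm1$ generator (in the $T$-region). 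Thus the $\K$-module $\Sk_{\Rep_qB}(M)$ is spanned by the stated bipartite ribbon tangles described in the statement, and it remains to identify the relations.

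\emph{Step 2: Deriving the listed relations.} I would verify that each relation in Figures \ref{fig:kauffman-crossings}--\ref{fig:kauffman-gates} holds in $\Sk_{\Rep_qB}(M)$. The relations in Figure \ref{fig:kauffman-crossings} are the image under $\RT_{\Rep_q\SL_2}$ (resp.\ $\RT_{\Rep_q\C^\times}$) of the usual Kauffman bracket (resp.\ $q$-crossing) identities in the bulk, hence hold by Definition \ref{def:rel-defect-skein-module}. The crossing-through-defect relation (Figure \ref{fig:kauffman-defect-crossing}) is precisely the content of the first bullet in the proof of Definition-Proposition \ref{def:defect-RT} (``crossings can pass through defects''), which in turn rests on the braided compatibility of the central structure $H$. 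The ``dip'' relations (Figure \ref{fig:kauffman-dips}) are exactly Lemma \ref{lemma:defect-skein-unique}: a skein that passes into the defect and immediately returns is a morphism in $\Hom_{\SkCat^\pitchfork}(X_\lambda,X_\lambda)=\K$, and a short computation with $\operatorname{Ind}$/$\operatorname{Res}$ and the explicit half-braiding $\sigma_{(123)}\mathcal{R}^G_{13}(\mathcal{R}^T_{32})^{-1}$ pins down the scalar; the ``$T$-skein out of the defect is trivial'' relation follows from $\pi(E_\lambda)=0$, so that the only map $\operatorname{Ind}_T^B(\chi)\to\operatorname{Res}_B^G(V)$ factoring appropriately is the evident one. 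The gate relations (Figures \ref{fig:kauffman-G-gates}, \ref{fig:kauffman-gates}) are the unpacking, via the coend/co-Yoneda formula \eqref{eq:internal-coend-expression} and Figure \ref{fig:internal-skein-details}, of the statement that a coupon near a gate is absorbed into the state: concretely, for $G=\SL_2$ one uses the self-duality of $V$ and the explicit evaluation/coevaluation to read off how $+$ and $-$ states interact with cups, caps, and the defect.

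\emph{Step 3: Completeness.} The remaining and main point is that these relations \emph{suffice}: that the span of stated tangles modulo the listed relations surjects onto $\Sk_{\Rep_qB}(M)$ with trivial kernel. Surjectivity is Steps 1--2. For injectivity I would argue locally, using that $\Sk_{\Rep_qB}(M)$ is by definition a colimit over bipartite disk/cylinder insertions of the local evaluation data: it suffices to check that the local relations imposed in Definition \ref{def:rel-defect-skein-module} --- i.e.\ the kernels of $\RT_{\Rep_q\SL_2}$, $\RT_{\Rep_q\C^\times}$, and $\RT_{\Rep_qB}$ on disks --- are all consequences of the listed relations. For the purely-$G$ and purely-$T$ disks this is the classical fact that the Kauffman bracket (resp.\ the $q$-crossing relations) give a complete presentation of $\SkCat(\DD_G)\simeq\Rep_q\SL_2$ (resp.\ $\SkCat(\DD_T)\simeq\Rep_q\C^\times$), cf.\ the discussion preceding Lemma \ref{lemma:defect-disk-gives-B}. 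For the defect disk $\DD_B$ one invokes Lemma \ref{lemma:defect-disk-gives-B}, $\SkCat_{\Rep_qB}(\DD_B)\simeq\Rep_qB$: any morphism between simple-coloured objects in the transverse category is determined by Lemma \ref{lemma:defect-skein-unique} up to a scalar, so any local relation among defect skeins is, after projecting to simples, a scalar identity that is already recorded in Figures \ref{fig:kauffman-dips}--\ref{fig:kauffman-gates}. Assembling these local completeness statements along the disk-insertion colimit gives that the presented module equals $\Sk_{\Rep_qB}(M)$.

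\emph{Main obstacle.} I expect Step 3 --- completeness of the relations at and near the defect, including gates --- to be the crux. The bulk parts are quotable from the literature, but one must check carefully that the interaction of a $G$-cable, a $T$-strand, a defect crossing, and a gate (with its ordering/orientation bookkeeping from Figure \ref{fig:internal-skein-details}) generates no relation beyond those listed; concretely this amounts to showing that the hom-spaces of $\SkCat^\pitchfork_{\Rep_qB}$ between simple-coloured boundary data are exactly one-dimensional (Lemma \ref{lemma:defect-skein-unique}) and that passing to general (non-transverse) skeins adds nothing new, which is where one uses that $\Rep_qB$ is generated under colimits by the image of the central structure together with the $\operatorname{Ind}/\operatorname{Res}$ computations. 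The scalars appearing in Figures \ref{fig:kauffman-dips}--\ref{fig:kauffman-gates} must then be matched against the explicit $\mathcal{R}$-matrix formula above; this is a finite computation but the place where sign/normalisation errors would hide.
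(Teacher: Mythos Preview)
Your proposal is correct and follows essentially the same approach as the paper's own proof, which is quite terse: the paper simply asserts that the bulk relations are ``a standard result about Kauffman bracket skein algebras and $\CC^*$ skein algebras,'' that the defect relations follow from Lemma~\ref{lemma:defect-skein-unique}, and that the gate relations come from the coend relation of an internal skein algebra. Your Steps 1--3 unpack exactly these three ingredients in considerably more detail than the paper provides, and your identification of Step~3 (completeness at the defect) as the crux is apt---the paper's proof leaves this largely implicit.
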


\begin{proof}
    Away from any defects and gates, this is a standard result about Kauffman bracket skein algebras and $\CC^*$ skein algebras.
    We follow standard conventions, with the state $+$ corresponding to the highest weight vector in the fundamental representation and $-$ to the lowest weight vector. 
    Near the defect, we use Lemma \ref{lemma:defect-skein-unique}, and near a gate we use the coend relation of an internal skein algebra.
\end{proof}

\begin{figure}
    \centering
    \includegraphics[]{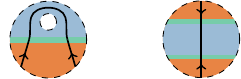}
    \caption{The vanishing dips in Figure \ref{fig:kauffman-dips} \emph{do not} imply that all skeins passing between regions vanish. For instance, the two skeins shown do not vanish in general. The skein on the right plays a major role in Section \ref{sec:quantum-A-polynomial}, where a collection of such skeins is localised (i.e. made invertible) to obtain a quantum cluster chart.}
    \label{fig:kauffman-nontrivial}
\end{figure}

\subsection{Monadic reconstruction}\label{sec:monadic-reconstruction}

Let us denote by $\act_m$ the $\Rep_qG \boxtimes \Rep_qT^{bop}$-action functor applied to some object $m \in \Rep_qB$:
\begin{equation}
  \begin{aligned}
    \act_m : \Rep_qG \boxtimes \Rep_qT^{bop} &\to \Rep_qB \\
    V \boxtimes \chi & \mapsto \iota^*V \otimes \pi^* \chi \otimes m.
\end{aligned}
\end{equation}
This is a colimit preserving functor between presentable categories and therefore has a right adjoint, $\act_m^R$.
We have an associated monad 
\[(A_m:= \act^R_m\act_m, \mu : A_mA_m \Rightarrow A_m, \epsilon : \id \Rightarrow A_m).\]

The right adjoint $\act^R_m$ is often denoted $\underline{\Hom}(m,-)$ and called the internal hom functor.
Similarly, the algebra object $A_m(\idty)=\underline{\Hom}(m,m)$ is commonly denoted $\underline{\End}(m)$ and called the internal endomorphism algebra of $m$.  We will use these notations henceforth.

Next we compare the Eilenberg-Moore category of this monad with $\Rep_qB$ itself.  We have an adjoint pair of comparison functors:
\begin{equation}
  \begin{tikzcd}
    \Rep_qB \ar[r, bend left,"\tilde{L}",end anchor={[xshift=-3em]},<-] \ar[r, bend right,"\tilde{R}",end anchor={[xshift=-3em]}] & \underline{\End}(m) -\mod_{\Rep_qG\boxtimes \Rep_qT^{bop}} 
  \end{tikzcd}
\end{equation}

In good situations, the comparison functor $\tilde{R}$ defines a reflexive embedding, or even an equivalence.  The following theorem is explained in \cite{JLSS2021,BBJ2018Integrating}, however it is a straightforward modification of a theorem of Ostrik \cite{Ostrik}, which is itself a special case of the Beck monadicity theorem \cite{Beck}.
\begin{theorem}
  Fix an action of $\cA$ on $ \mathcal{M} \to \mathcal{M}$, where $\cA$ is a cp-rigid tensor category and $\mathcal{M}$ is abelian.
  Let $m$ denote an object of $\mathcal{M}$, $\act_m : \cA \to \mathcal{M}$ the action on $m$, and $\underline{\End}(m)$ the monad of the adjunction $(\act_m, \act_m^R)$. 
  \begin{enumerate}
    \item Suppose that $\underline{\Hom}(m,-)$ is conservative (i.e. reflects isomorphisms.) Then we have a reflexive embedding of $\cA$-module categories,
      \begin{equation}
        \widetilde{R}:\mathcal{M} \hookrightarrow \underline{\End}(m)-\mod_{\cA}
      \end{equation}

    \item Suppose that $\underline{\Hom}(m,-)$ is both conservative and colimit-preserving. Then the embedding $\widetilde{R}$ is an equivalence of $\cA$-module categories,
      \begin{equation}
        \mathcal{M} \cong \underline{\End}(m)-\mod_{\cA}
      \end{equation}
  \end{enumerate}
\end{theorem}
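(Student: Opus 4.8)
The plan is to recognize this statement as an instance of \emph{internal Morita theory}: the functor $\widetilde R$ is the Eilenberg--Moore comparison functor of the adjunction $(\act_m,\act_m^R)$, and the two parts are the ``$m$ is a generator'' and ``$m$ is a projective generator'' cases of the Beck monadicity theorem \cite{Beck}, transported into the $\cA$-module world. So the first task is to set up the adjunction correctly in $\cA$-module categories and identify the monad; the second is to quote the appropriate form of monadicity.

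First I would record that $\act_m$ is a cocontinuous morphism of $\cA$-module categories, so it has a right adjoint $\act_m^R=\underline{\Hom}(m,-)$; because $\cA$ is cp-rigid, the projection formula $V\otimes\underline{\Hom}(m,n)\cong\underline{\Hom}(m,V\otimes n)$ holds for compact-projective $V$ and extends along colimits, so $\act_m^R$ is also a morphism of $\cA$-module categories and $A_m=\act_m^R\act_m$ is an $\cA$-module monad. Concretely, for compact-projective $V$ with dual $V^\vee$ and arbitrary $W\in\cA$,
\[
\Hom_\cA\!\big(W,\underline{\Hom}(m,V\otimes m)\big)\cong\Hom_{\mathcal{M}}(W\otimes m,V\otimes m)\cong\Hom_{\mathcal{M}}(V^\vee\otimes W\otimes m,m)\cong\Hom_\cA\!\big(W,V\otimes\underline{\End}(m)\big),
\]
so by Yoneda $A_m(V)\cong V\otimes\underline{\End}(m)$, and extending along colimits (both sides are cocontinuous in $V$ and compact-projectives generate) identifies $A_m$ with $(-)\otimes\underline{\End}(m)$ as a module monad, compatibly with unit and multiplication. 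Consequently the Eilenberg--Moore category of $A_m$ is exactly $\underline{\End}(m)\text{-}\mod_{\cA}$, its forgetful functor is the usual one, and the canonical comparison functor sends $n\mapsto(\underline{\Hom}(m,n),\text{action})$, i.e.\ it is precisely $\widetilde R$.

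With this identification in place, part (2) is the Beck monadicity theorem applied to $(\act_m,\act_m^R)$: the comparison functor is an equivalence as soon as $\act_m^R$ is conservative (hypothesis), $\mathcal{M}$ has coequalizers of $\act_m^R$-split parallel pairs (automatic since $\mathcal{M}$ is abelian/presentable), and $\act_m^R$ preserves them (automatic from the hypothesis that $\act_m^R$ is cocontinuous); the resulting equivalence is of $\cA$-module categories by the previous step. For part (1) I would invoke the conservative (non-preserving) form of Barr--Beck, exactly as in Ostrik's argument \cite{Ostrik} and its cocompleted versions \cite{JLSS2021,BBJ2018Integrating}: the comparison functor $\widetilde R$ admits a left adjoint $\widetilde L$ (left Kan extension along the free-module functor, using cocompleteness and presentability of $\mathcal{M}$), and conservativity of $\act_m^R$ forces the counit $\widetilde L\widetilde R\Rightarrow\id_{\mathcal{M}}$ to be invertible --- one computes $\widetilde L\widetilde R n$ as the realization of the bar resolution of $n$, whose $\act_m^R$-image is a split augmented simplicial object with colimit $\act_m^R n$, so that $\act_m^R$ of the counit is an isomorphism and conservativity completes the argument. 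Hence $\widetilde R$ is fully faithful with left adjoint $\widetilde L$, i.e.\ a reflexive embedding of $\cA$-module categories.

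The main obstacle, and the only point that is not a black-box citation of classical monadicity, is the cp-rigid (rather than rigid) bookkeeping in the monad identification: the duality/Yoneda computation displayed above is only available for compact-projective $V$, so one must run it there and then genuinely use cocontinuity of $\act_m$ and of $\otimes$ together with the fact that compact-projectives generate $\cA$ to conclude $A_m\cong(-)\otimes\underline{\End}(m)$ on all of $\cA$; carrying the unit, multiplication, and the $\cA$-module structure of $\widetilde R$ through this colimit extension is the fiddly part. A secondary delicate point is that the part (1) argument really does require the conservative form of Barr--Beck together with presentability of $\mathcal{M}$ --- conservativity of a right adjoint alone does not imply full faithfulness of the comparison functor in general --- so there one is importing Ostrik's bar-resolution argument rather than a turnkey theorem.
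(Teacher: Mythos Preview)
The paper does not give its own proof of this theorem: it is stated with the remark that it ``is explained in \cite{JLSS2021,BBJ2018Integrating}, however it is a straightforward modification of a theorem of Ostrik \cite{Ostrik}, which is itself a special case of the Beck monadicity theorem \cite{Beck}.'' Your proposal correctly identifies this as an instance of Beck monadicity together with the cp-rigid projection formula identifying the monad with $(-)\otimes\underline{\End}(m)$, and your sketch is exactly the argument those references contain; so your approach and the paper's (deferred) approach coincide.
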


\subsection[The redecoration of RepqB]{The redecoration $\widetilde{\Rep}_qB$ of $\Rep_q B$}\label{sec:repqB-renormalisation}
Geometrically speaking, it is clear that $\Rep B$ and its quantization $\Rep_qB$ are the natural systems of local coefficients for a skein theory modelling the classical and quantum $A$-polynomial, respectively.  Indeed, following the framework of \cite{BZFN} and of \cite{JLSS2021}, one may identify the stratified factorization homology of the triple $(\Rep G,\Rep B, \Rep T)$ on a bipartite 3-manifold with the decorated character stack on the same bipartite 3-manifold.

From a skein-theoretic perspective, however, there is a problem:  in the setting of non-semisimple categories \cite{Brown_Haioun_2024} such as $\Rep_qB$, skeins are labelled by compact-projective objects, whereas the categories $\Rep B$ and $\Rep_qB$ have no compact-projective objects whatsoever.  In this section we apply monadic construction to give a resolution to this dilemma.

It is easy to compute the algebra $\underline{\End}_{G\times T}(\idty_B)$ from the definition in the case $\mathcal{M}=\Rep_qB$, using Lemma \ref{lemma:defect-skein-unique}.  We have an isomorphism:
\[
\underline{\End}_{G\times T}(\idty_B)\cong \bigoplus_{\lambda\in \Lambda^+}  V(\lambda)^* \boxtimes \lambda \quad \in \Rep_q G \boxtimes  \Rep_q T^{bop}.
\]

The algebra structure coincides with that of the $U_q(\mathfrak{n})$-invariant subalgebra of the FRT algebra $\cO_q(G)$, hence this algebra is often denoted $\cO_q(G/N)$ in the literature.  We note that
$\underline{\End}_{G\times T}(\idty_B)$ is a commutative algebra object -- this is a general feature of the monadic reconstruction of a central tensor category. 

\begin{definition}
We define $\widetilde{\Rep}_qB$ to be the $(\Rep_q G,\Rep_qT)$-central algebra,
\[\widetilde{\Rep}_qB = \underline{\End}_{G\times T}(\idty_B)-\mod_{\Rep_qG\boxtimes \Rep_qT^{bop}}.\]
\end{definition}

\begin{lemma}[{\cite[Prop. 3.41]{JLSS2021}}]\label{lem:monogon}
  The internal global sections functor
  \[
  \underline{\End}_{G\times T}(\idty_B,-) : \Rep_qB \to \widetilde{\Rep}_qB,\]
  is conservative but not colimit preserving.
\end{lemma}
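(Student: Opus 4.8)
The plan is to treat the functor of the lemma as the comparison functor $\widetilde{R}$ of the monadic reconstruction theorem applied to $\mathcal M=\Rep_qB$ and $m=\idty_B$, so that its underlying object in $\cA:=\Rep_qG\boxtimes\Rep_qT^{bop}$ is $\underline{\Hom}(\idty_B,-)=\act^R_{\idty_B}$ (this is the functor written $\underline{\End}_{G\times T}(\idty_B,-)$ in the statement). Since $\widetilde{\Rep}_qB=\underline{\End}_{G\times T}(\idty_B)-\mod_\cA$ is the category of modules over a colimit-preserving algebra object, the forgetful functor $\widetilde{\Rep}_qB\to\cA$ is conservative and preserves colimits, so both assertions may be verified for the functor $M\mapsto\underline{\Hom}(\idty_B,M)$ landing in $\cA$. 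The first thing I would record is that, by the adjunction $(\act_{\idty_B},\act^R_{\idty_B})$ together with the (generic-$q$) semisimplicity of $\cA$, the multiplicity space of the simple object $V(\lambda)\boxtimes\mu$ inside $\underline{\Hom}(\idty_B,M)$ is canonically $\Hom_{\Rep_qB}\big(i^*V(\lambda)\otimes\pi^*\mu,\ M\big)$, for $\lambda\in\Lambda^+$ and $\mu\in\Lambda$; hence $\underline{\Hom}(\idty_B,-)$ detects an isomorphism precisely when the corepresentable functors $\Hom_{\Rep_qB}(i^*V(\lambda)\otimes\pi^*\mu,-)$ jointly do.

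For conservativity I would invoke the general principle that a right adjoint is conservative as soon as the essential image of its left adjoint generates the target under colimits: if $\underline{\Hom}(\idty_B,f)$ is an isomorphism, then $\Hom(P,f)$ is an isomorphism for every $P$ in the image of $\act_{\idty_B}$, hence --- contravariant $\Hom$ turning colimits into limits --- for every object of $\Rep_qB$, so $f$ is an isomorphism by Yoneda. It therefore suffices to prove that every object of $\Rep_qB$ is a colimit of objects of the form $i^*V(\lambda)\otimes\pi^*\mu$. Since objects of $\Rep_qB$ are by definition locally finite, it is enough to present each finite-dimensional cyclic lowest-weight $U_q\mathfrak b$-module $L$, say of lowest weight $\nu$, as a quotient of some $i^*V(\lambda)\otimes\pi^*\mu$: writing $\nu=w_0\lambda+\mu$, the module $i^*V(\lambda)\otimes\pi^*\mu$ is cyclic of lowest weight $\nu$, generated by the image of the lowest weight vector of $V(\lambda)$ (which generates $V(\lambda)$ over $U_q\mathfrak n^+$), and for $\lambda$ sufficiently dominant the relations that vector satisfies are supported outside the fixed bounded set of weights occurring in $L$ --- this is the stabilization of the weight multiplicities of $V(\lambda)$ near the extremal weight to the values of the Kostant partition function --- so $i^*V(\lambda)\otimes\pi^*\mu$ surjects onto $L$. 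Iterating this to resolve kernels presents every object of $\Rep_qB$ as a colimit of such modules, which yields conservativity, and with it --- via the monadic reconstruction theorem --- the reflexive embedding $\widetilde{R}:\Rep_qB\hookrightarrow\widetilde{\Rep}_qB$.

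For the failure of colimit preservation I would exhibit a single cokernel that the functor destroys; this suffices because the functor, being a right adjoint, is automatically left exact. Fix $\lambda\in\Lambda^+$ regular. The module $i^*V(\lambda)$ is cyclic, generated by its lowest weight vector of weight $w_0\lambda$, and the span of its weight spaces of weight $\neq w_0\lambda$ is a $U_q\mathfrak b$-submodule --- no $E_i$ can raise a weight of $V(\lambda)$ to the minimal weight $w_0\lambda$ --- with quotient the simple object $\pi^*(w_0\lambda)$; this gives an epimorphism $f:i^*V(\lambda)\twoheadrightarrow\pi^*(w_0\lambda)$. Applying $\underline{\Hom}(\idty_B,-)$ and reading off the multiplicity component indexed by $V(0)\boxtimes w_0\lambda$, i.e. applying $\Hom_{\Rep_qB}(\pi^*(w_0\lambda),-)$, produces the map $\Hom_{\Rep_qB}(\pi^*(w_0\lambda),\,i^*V(\lambda))\to\K$, whose source is the space of weight-$w_0\lambda$ vectors of $V(\lambda)$ annihilated by every $E_i$; this vanishes for $\lambda$ regular, since the lowest weight space is one-dimensional and each $E_i$ acts injectively on it. Hence $\underline{\Hom}(\idty_B,f)$ is not an epimorphism although $f$ is, so the functor does not preserve this cokernel and is not colimit preserving. (Alternatively, and more softly: were it colimit preserving, part (2) of the monadic reconstruction theorem would make it an equivalence $\Rep_qB\simeq\widetilde{\Rep}_qB$, impossible because $\widetilde{\Rep}_qB=\cO_q(G/N)-\mod$ has a compact projective generator while $\Rep_qB$ has no nonzero compact projective objects.)

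The step I expect to require the most care is the generation statement underpinning conservativity --- controlling the submodule structure of $i^*V(\lambda)$ well enough to see that these twisted modules cover all of $\Rep_qB$. For $G=\SL_2$ it is entirely transparent: $i^*V(n)$ is uniserial with composition factors $\pi^*(n),\pi^*(n-2),\dots,\pi^*(-n)$ from socle to top, and one computes directly that $\Hom_{\Rep_qB}\big(i^*V(n)\otimes\pi^*\mu,\ M\big)=\ker\big(E^{n+1}:M_{\mu-n}\to M_{\mu+n+2}\big)$, which exhausts $M_{\mu-n}$ as $n\to\infty$ because $E$ acts locally nilpotently on every object of $\Rep_qB$; so in the $\SL_2$ case relevant to this paper the whole argument is elementary, and the general reductive case needs only the corresponding standard generic-$q$ facts about $V(\lambda)$.
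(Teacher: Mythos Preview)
The paper does not supply a proof of this lemma; it is simply stated with attribution to Proposition~3.41 of the cited reference. Your argument is correct and self-contained. For conservativity there is in fact a shorter route than the generation-under-colimits argument you give: any nonzero object $M$ of $\Rep_qB$, being locally finite, contains a nonzero finite-dimensional submodule and hence a highest weight vector $v$ of some weight $\nu$; the assignment $1\mapsto v$ then defines a nonzero map $\pi^*\nu=i^*V(0)\otimes\pi^*\nu\to M$, so the $V(0)\boxtimes\nu$-multiplicity of $\underline{\Hom}(\idty_B,M)$ is already nonzero. This bypasses the stabilization-of-weight-multiplicities step you flag as delicate. Your treatment of the failure of colimit preservation --- either via the explicit epimorphism $i^*V(\lambda)\twoheadrightarrow\pi^*(w_0\lambda)$ or via the compact-projective obstruction --- is correct as written.
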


  From Lemma \ref{lem:monogon} we obtain a reflexive embedding.
\begin{equation}
  \Rep_qB \hookrightarrow \widetilde\Rep_qB = \cO_q(G/N)-\mod_{\Rep_q G\boxtimes \Rep_qT^{bop}}
\end{equation}
of $\Rep_qG\boxtimes \Rep_q T^{bop
}$-module categories.

Skein-theoretically, $\widetilde{\Rep_q}B$ is very natural.  The subcategory of $\Rep_qB$ generated by $\idty_B$ is precisely the free co-completion of the category $\SkCat^\pitchfork_\cB(\DD_\cB)$.  Hence, upon passing to skein categories of arbitrary surfaces, we obtain the following:
\begin{corollary}
We have an equivalence of categories
\[\widehat{\SkCat}^\pitchfork_{\Rep_qB}(\Sigma)\simeq \widehat{\SkCat}_{(\widetilde{\Rep}_qB)^{cp}}(\Sigma)
\]
\end{corollary}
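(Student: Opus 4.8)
The plan is to chain together two equivalences that have already been established in the excerpt. The first is the identification $\SkCat_\cB(\DD_B)\cong\cB$ as pivotal $(\cA,\cC)$-central algebras (Lemma~\ref{lemma:defect-disk-gives-B}), which restricts, via the transversality subcategory, to an equivalence $\SkCat^\pitchfork_{\Rep_qB}(\DD_\cB)$ with the full subcategory of $\Rep_qB$ generated by $\idty_B$ — this is exactly the sentence asserted just above the corollary, and it rests on Lemma~\ref{lemma:defect-skein-unique}, which pins down the hom-spaces of transverse skeins in a defect disk. The second ingredient is the monadic reconstruction: the internal endomorphism algebra $\underline{\End}_{G\times T}(\idty_B)\cong\cO_q(G/N)$ computed in Section~\ref{sec:repqB-renormalisation}, together with the fact that $\widetilde{\Rep}_qB$ is by definition its category of equivariant modules, and that the compact-projective objects of $\widetilde{\Rep}_qB$ are precisely the objects of the form $\act_m$ applied to objects of $\Rep_qG\boxtimes\Rep_qT^{bop}$ — equivalently, the free $\cO_q(G/N)$-modules and their summands. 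In other words, $(\widetilde{\Rep}_qB)^{cp}$ is the Karoubi/idempotent completion of the category freely generated under the $(\Rep_qG,\Rep_qT^{bop})$-action by the single object $\idty_B$, which is the same as the subcategory of $\Rep_qB$ generated by $\idty_B$.

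Concretely, I would argue as follows. First, observe that $\SkCat^\pitchfork_\cB(\DD_\cB)$ is the free $(\SkCat(\DD_A),\SkCat(\DD_C))$-module category on one object (the empty defect labelling), because every transverse defect skein in $\DD_B\times[0,1]$ is, by the transversality condition and the central structure on $\Rib_\cB$ (Definition-Proposition~\ref{lemma:defect-ribbon-properties}), built from the action of bulk skeins on $\idty$, and Lemma~\ref{lemma:defect-skein-unique} identifies the hom-spaces. Hence its free cocompletion $\widehat{\SkCat}^\pitchfork_{\Rep_qB}(\DD_\cB)$ is the free cocomplete module category on one generator, whose compact-projective part is the Karoubi completion of that free module category. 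Second, identify this with $(\widetilde{\Rep}_qB)^{cp}$: by the monad formalism of Section~\ref{sec:monadic-reconstruction} with $m=\idty_B$, modules over $\underline{\End}(m)$ inside $\Rep_qG\boxtimes\Rep_qT^{bop}$ are exactly the free cocompletion of the image of $\act_{\idty_B}$, and the compact-projectives are the Karoubi completion of $\{\act_{\idty_B}(V\boxtimes\chi)\}$; this matches the previous description since $\act_{\idty_B}(V\boxtimes\chi)=\iota^*V\otimes\pi^*\chi$ generates the same subcategory of $\Rep_qB$ as $\idty_B$ does under the action. Third, pass from the disk to an arbitrary bipartite surface $\Sigma$: by the gluing/excision formalism (Lemma~\ref{lemma:skfun-glues}), $\SkCat^\pitchfork_\cB(\Sigma)$ is assembled by coends from copies of $\SkCat^\pitchfork_\cB(\DD_\cB)$ glued along bulk disks, and since free cocompletion is a symmetric-monoidal-for-coends $2$-functor $\Bimod\to\Pr^L$ (Section~\ref{sec:various-cats}), $\widehat{\SkCat}^\pitchfork_{\Rep_qB}(\Sigma)$ is the same colimit computed with $\widehat{\SkCat}^\pitchfork_{\Rep_qB}(\DD_\cB)$ replaced by the equivalent $\widehat{\SkCat}_{(\widetilde{\Rep}_qB)^{cp}}(\DD_\cB)$ in each slot. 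This exhibits the desired equivalence $\widehat{\SkCat}^\pitchfork_{\Rep_qB}(\Sigma)\simeq\widehat{\SkCat}_{(\widetilde{\Rep}_qB)^{cp}}(\Sigma)$.

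I expect the main obstacle to be the surface-level step — checking that the local equivalence at the disk propagates compatibly through the gluing coends, i.e. that all the structure functors ($\RT_\cB$, the central structure, the disk-insertion functors, and the coend composition in $\Bimod$) intertwine the two presentations coherently. The disk case is essentially Lemma~\ref{lemma:defect-skein-unique} plus bookkeeping, but verifying that the natural transformation witnessing $\SkCat^\pitchfork_{\Rep_qB}(\DD_\cB)\simeq(\widetilde{\Rep}_qB)^{cp}$ is one of $(\Rep_qG,\Rep_qT^{bop})$-\emph{central} module categories, and hence compatible with all the gluing data, requires tracking the half-braidings on both sides. One clean way to finesse this is to invoke the general comparison with factorization homology: by \cite{Cooke2019,Brown_Haioun_2024} the skein category is factorization homology of the local coefficient system, and by \cite{BJS2021} $\widetilde{\Rep}_qB$ is the cp-rigid renormalization of that same coefficient system as a $1$-morphism in $\BrTens$; then the equivalence on surfaces is just functoriality of factorization homology applied to the equivalence of coefficient systems established at the disk. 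I would present the direct skein-theoretic argument as the main proof and mention the factorization-homology route as an alternative.
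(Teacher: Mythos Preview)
Your proposal is essentially correct and matches the paper's approach: the paper gives no proof beyond the single sentence preceding the corollary, asserting that the disk-level identification of $\widehat{\SkCat}^\pitchfork(\DD_B)$ with $\widetilde{\Rep}_qB$ propagates to arbitrary surfaces, and your argument is a detailed unpacking of exactly that.

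One technical wrinkle worth tightening: in your third step you invoke Lemma~\ref{lemma:skfun-glues} to assemble $\SkCat^\pitchfork_\cB(\Sigma)$ from copies of $\SkCat^\pitchfork_\cB(\DD_\cB)$, but that excision lemma is stated for the full defect skein category $\SkCat_\cB$, not for the transverse subcategory, and the paper explicitly notes that the inclusion $\SkCat^\pitchfork_\cB(\Sigma)\hookrightarrow\SkCat_\cB(\Sigma)$ is neither full nor essentially surjective in general. So you cannot directly cite excision for $\SkCat^\pitchfork$ without first knowing it is itself a skein category for some coefficient system --- which is precisely the statement you are trying to prove. The factorization-homology route you mention at the end is the right way to close this: once the disk-level equivalence is established as an equivalence of pivotal $(\Rep_qG,\Rep_qT)$-central categories, both sides of the corollary are the skein category (equivalently, factorization homology via \cite{Cooke2019,Brown_Haioun_2024}) of equivalent local coefficient systems, and the surface-level equivalence follows by functoriality. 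I would promote that from an ``alternative'' to the main argument and drop the direct appeal to excision for $\SkCat^\pitchfork$.
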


\subsection{Quantum tori}\label{sec:quantum-tori}
Our approximation of the quantum A-polynomial (Section \ref{sec:quantum-A-polynomial}) relies heavily on quantum cluster charts of internal decorated skein algebras.

    \begin{definition}
        Let $\Lambda$ be a lattice with a skew symmetric pairing $\omega : \Lambda \times \Lambda \to \mathbb{Z}$. The \emph{quantum torus $\WW_\Lambda$} is the $\C_q:= \C[q^{\frac12},q^{-\frac12}]$ algebra generated by $X^v$, $v\in \Lambda$ with multiplication given by
        \begin{equation}
            X^v X^w = q^{\frac12\omega( v, w)} X^{v+w}
        \end{equation}
    \end{definition}

    Given a basis $\{e_i\}$ for $\Lambda$, we let $\langle-,-\rangle : \Lambda \times \Lambda \to \ZZ$ denote the inner product making $\{e_i\}$ an orthonormal basis.  Then we may write $\omega(-,-) := \langle-,\Omega-\rangle$ for a square, skew-symmetric matrix $\Omega$.

The use of quantum tori to study skein algebras goes back to the quantum trace map of \cite{Bonahon_Wong_2011a,Bonahon_Wong_2011b}, and is tightly related to the construction of cluster coordinates on character varieties \cite{Fock_Goncharov_2009a,Fock_Goncharov_2009b}.
\begin{definition}
Let $\WW$ be a quantum torus with a $T^n$-action, and that $\Sigma$ is a decorated surface with $n$ $T$-region gates.
    A \emph{quantum cluster chart} of a decorated internal skein algebra $\SkAlg^{int}(\Sigma)$ is a isomorphism of $T^n$ representations $\varphi : \SkAlg^{int}(\Sigma)[S^{-1}] \to \WW$, where $S$ is some Ore set.
\end{definition}
It is convenient to describe $T$-actions on quantum tori in terms of a map $s : \Lambda \to \ZZ^{n}$ describing the integral weights in terms of the underlying lattice elements.
We are primarily interested in quantum cluster charts as computation tools.
The underlying lattice is amenable to computer algebra systems, so whenever possible we frame the constructions of Section \ref{sec:quantum-A-polynomial} in terms of the lattice.
The following lemma is a useful part of this program.

A sublattice $\Gamma \subset \Lambda$ is called \emph{isotropic} if the pairing restricted to $\Gamma$ vanishes identically; this is equivalent to asking that the subalgebra generated by $X^v$ for $v\in\Gamma$ is commutative.  A much stronger condition on $\Gamma$ is that it lie in the kernel of the pairing; this is equivalent to asking that the subalgebra generated by $X^v$ for $v\in\Gamma$ is central.  Finally, we say that $\Gamma$ is \emph{unimodular} if the quotient $\Lambda/\Gamma$ is torsion-free.

We require the following elementary lemma, whose proof follows from the fact that $\WW_\Lambda$ is free over $\WW_\Gamma$ with basis given by monomials $X^v$ for $v\in\Lambda/\Gamma$.

\begin{lemma}\label{lemma:central-quotient} Let $\Gamma\subset \Lambda$ be an isotropic sublattice, and fix a character $\chi:\Gamma\to\C$.  Then a basis for the induced module
$\WW_\Lambda \otimes_{\WW_\Gamma} \C_\chi$ is given by $\{X^v, \textrm{ for }v\in \Lambda/\Gamma\}$.  In particular, suppose that $\Gamma$ is unimodular (i.e. the quotient is torsion free) and lies in the kernel of the pairing. Then $\WW_\Lambda \otimes_{\WW_\Gamma} \C_\chi$ is naturally an algebra, and we have an isomorphism,
\[
\WW_\Lambda \otimes_{\WW_\Gamma} \C_\chi \cong \WW_{\Lambda/\Gamma}.
\]
\end{lemma}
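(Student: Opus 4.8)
The plan is to derive both assertions from the single fact that $\WW_\Lambda$ is free as a right $\WW_\Gamma$-module on a set of $\Gamma$-coset representatives. Concretely, I would fix a set-theoretic section $s:\Lambda/\Gamma\to\Lambda$ of the projection and write $\tilde v:=s(\bar v)$, so that every $\mu\in\Lambda$ is uniquely $\tilde v+\gamma$ with $\bar v\in\Lambda/\Gamma$, $\gamma\in\Gamma$. Since $X^{\mu}X^{\nu}=q^{\frac12\omega(\mu,\nu)}X^{\mu+\nu}$ and $\{X^{\mu}\}_{\mu\in\Lambda}$ is a $\C_q$-basis of $\WW_\Lambda$, for fixed $\bar v$ the elements $X^{\tilde v}X^{\gamma}$ ($\gamma\in\Gamma$) are nonzero scalar multiples of exactly the basis monomials $X^{\mu}$ with $\mu\in\tilde v+\Gamma$. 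Partitioning $\Lambda$ into $\Gamma$-cosets then yields $\WW_\Lambda=\bigoplus_{\bar v\in\Lambda/\Gamma}X^{\tilde v}\WW_\Gamma$, a free right $\WW_\Gamma$-module with basis $\{X^{\tilde v}\}$. (Isotropy of $\Gamma$ enters only to guarantee that $\WW_\Gamma$ is commutative, so that a character $\chi$ and the rank-one module $\C_\chi$ make sense; the freeness statement itself does not need it.)

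The first claim now follows by applying $-\otimes_{\WW_\Gamma}\C_\chi$: from the direct sum decomposition one gets $\WW_\Lambda\otimes_{\WW_\Gamma}\C_\chi\cong\bigoplus_{\bar v}\C_q\cdot(X^{\tilde v}\otimes 1)$, so the classes $X^{\tilde v}\otimes 1$ form a $\C_q$-basis. The notation ``$X^{v}$ for $v\in\Lambda/\Gamma$'' in the statement is the mild abuse of writing $X^{v}$ for $X^{\tilde v}\otimes 1$; I would record that changing the representative by $\gamma$ rescales this class by the nonzero scalar $q^{\frac12\omega(\tilde v,\gamma)}\chi(\gamma)$, so that ``a basis indexed by $\Lambda/\Gamma$'' is well posed.

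For the ``in particular'', assume in addition $\Gamma\subseteq\ker\omega$ and $\Lambda/\Gamma$ torsion-free. Writing $\C_\chi=\WW_\Gamma/I_\chi$ with $I_\chi=(X^{\gamma}-\chi(\gamma):\gamma\in\Gamma)$, right-exactness of the tensor product gives $\WW_\Lambda\otimes_{\WW_\Gamma}\C_\chi\cong\WW_\Lambda/(\WW_\Lambda I_\chi)$; since $\Gamma\subseteq\ker\omega$ the generators $X^{\gamma}-\chi(\gamma)$ are central in $\WW_\Lambda$, so $\WW_\Lambda I_\chi$ is two-sided and the quotient is canonically an algebra. Because $\Lambda/\Gamma$ is finitely generated and torsion-free it is free, hence $0\to\Gamma\to\Lambda\to\Lambda/\Gamma\to 0$ splits; I would choose a splitting, i.e. a sublattice $\Lambda'\subseteq\Lambda$ projecting isomorphically to $\Lambda/\Gamma$, and take the section $s$ to land in $\Lambda'$. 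Then $\{X^{\tilde v}\}=\{X^{v'}:v'\in\Lambda'\}$, and I would define $\varphi:\WW_{\Lambda/\Gamma}\to\WW_\Lambda\otimes_{\WW_\Gamma}\C_\chi$ by $X^{\bar v}\mapsto X^{v'}\otimes 1$ under $\Lambda'\cong\Lambda/\Gamma$. That $\varphi$ is an algebra map is the identity $\omega(v_1',v_2')=\bar\omega(\bar v_1,\bar v_2)$ — valid exactly because $\omega$ descends to $\Lambda/\Gamma$ when $\Gamma\subseteq\ker\omega$ — combined with $v_1'+v_2'\in\Lambda'$; and $\varphi$ is bijective since it carries the $\C_q$-basis $\{X^{\bar v}\}$ to the $\C_q$-basis $\{X^{v'}\otimes 1\}$ of the first part.

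The one point I would be careful to state rather than gloss is why tensoring down yields an actual quotient ring: this uses that $\Gamma$ lies in $\ker\omega$, not merely that it is isotropic, so that $I_\chi$ is generated by \emph{central} elements and the left ideal it generates in $\WW_\Lambda$ is automatically two-sided. Everything else — the freeness over $\WW_\Gamma$, the existence of the splitting, and the monomial computation showing $\varphi$ respects products — is routine bookkeeping with quantum-torus exponents, so I do not anticipate a real obstacle.
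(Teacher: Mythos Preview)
Your proposal is correct and follows exactly the approach indicated in the paper: the paper states only that the lemma ``follows from the fact that $\WW_\Lambda$ is free over $\WW_\Gamma$ with basis given by monomials $X^v$ for $v\in\Lambda/\Gamma$,'' and your argument supplies precisely this freeness (via the coset decomposition) together with the routine consequences for the tensor product and the algebra structure. Your additional care in noting where isotropy versus the stronger hypothesis $\Gamma\subseteq\ker\omega$ is actually used, and in handling the splitting when $\Lambda/\Gamma$ is torsion-free, fills in details the paper leaves implicit but does not deviate from its line of reasoning.
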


\begin{remark}
The relative tensor product $\WW_\Lambda \otimes_{\WW_\Gamma} \C_\chi$ may be presented as the quotient by the left ideal $I=\langle X^{v_1} - \chi(v_1),\ldots,X^{v_k}-\chi(v_k) \rangle$, for any spanning set $v_1,\ldots, v_k$ of $\Gamma$.  This is how it typically arises.    
\end{remark}

\begin{figure}
    \centering
    \includegraphics{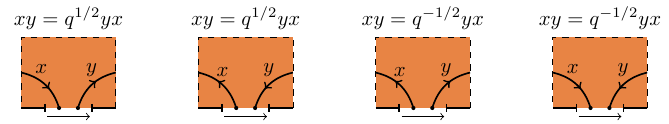}
    \caption{$T$-region commutation relations for skeins meeting only at a single gate.}
    \label{fig:T-region-commutation-relations}
\end{figure}

\subsection{Building block examples}\label{sec:building-blocks}

We will compute quantum cluster charts for the internal decorated skein algebras of a few bipartite surfaces.
In this section we will always work with the local coefficient system associated to the central structure $\Rep_qG\boxtimes \Rep_qT^{bop}\to Z_{Dr}(\widetilde{\Rep}_qB)$ for $G = \SL_2\C$ and $B$ the upper triangular Borel.

\begin{figure}
    \centering
    \includegraphics[width=.9\linewidth]{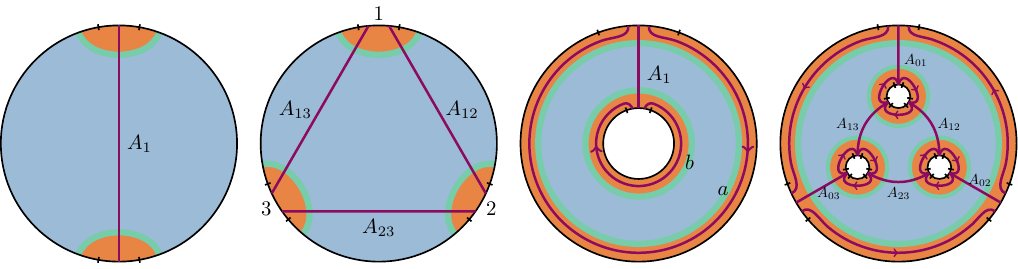}
    \caption{From left to right: The digon $\DD_2$, the triangle $\DD_3$, the annulus $\widetilde{Ann}$, and the four-punctured sphere $\TetSurf$. In each case the generators of a quantum cluster chart computed in Section \ref{sec:building-blocks} are shown in purple, the $T$-regions in orange, and the $G$-regions in blue.}
    \label{fig:basic-stratified-surfaces}
\end{figure}

\subsubsection{The digon}
By Lemma \ref{lemma:defect-disk-gives-B} and excision, we have
\[\SkCat(\DD_2) \cong \Rep_qB\underset{\Rep_qG}{\otimes}\Rep_qB\] Here $\DD_2$ is the digon shown in Figure \ref{fig:basic-stratified-surfaces}.  We make this a $\Rep_q G\boxtimes \Rep_qT^2$-module category by adding one gate in each open region of the boundary. (Note that the figure only shows the $T$-gates.)

By \cite[Theorem 3.49]{JLSS2021}, we have an isomorphism in $\Rep_q G \boxtimes  \Rep_q T \boxtimes  \Rep_q T$:
\[
\SkAlg^{int}_{G\times T^2}(\DD_2) \cong \SkAlg^{int}_{G\times T}(\DD_B)\widetilde{\otimes} \SkAlg^{int}_{G\times T}(\DD_B) \cong \bigoplus_{\lambda,\mu\in \Lambda^+}  \left(V(\lambda)\otimes V(\mu)\right)^* \boxtimes \,\lambda \boxtimes\, \mu
\]
In applying the result from \cite{JLSS2021}, we've used that $\SkAlg^{int}_{\mathcal{G}}(\Sigma) \cong \underline{\End}_\mathcal{G}(\mathrm{Dist}_\Sigma)$.
Similar to Lemma \ref{lem:monogon}, the right adjoint to disk insertion,
\[\underline{\Hom}(\idty_B\boxtimes \idty_B,-) : \SkCat(\DD_2) \to \Rep_qG\boxtimes \Rep_qT\boxtimes \Rep_qT,\]
  is conservative but not colimit preserving.
We can however still apply monadic reconstruction to a localized subcategory.
  
Let $A_{\lambda} \in \Sk(\DD_2\times[0,1]; \P(\idty_{G}\boxtimes\lambda\boxtimes\lambda))$ be the skein passing between the two $T$-regions, coloured by $\lambda \in \Rep_qT$ in the $T$-regions and $V(\lambda)\in \Rep_q G$ in the $G$-region.
Here $V(\lambda)$ is the unique finite dimensional simple $U_q\mathfrak{g}$-representation whose highest weight space is a copy of $\lambda$.
(Recall that not every $\lambda$ has an associated $V(\lambda)$ -- the weight must be in $\Lambda^+$.)

\begin{lemma}[The digon {\cite[Theorem 3.62]{JLSS2021}}]\label{lem:digon}
Fix $G = \SL_2\C$ and let $\SkAlg^{int}_{G\times T^2}(\DD_2)[A^{-1}_\lambda]-\mod_{G\times T^2}$ be the open subcategory on which the $A_{\lambda}$ are invertible.
The restriction of the comparison functor to this localized subcategory is both conservative and colimit preserving. 
Moreover, the functor of taking $G$-invariants is also conservative upon restriction and we obtain an equivalence
\[\SkAlg^{int}_{G\times T^2}(\DD_2)[A^{-1}_{\lambda}]^{U_q\mathfrak{g}}-\mod_{G\times T^2} \cong \SkAlg^{int}_{T^2}(\DD_2)[A^{-1}_{\lambda}]-\mod_{T^2}.\]
\end{lemma}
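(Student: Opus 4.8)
The plan is to deduce the lemma from the monadic reconstruction theorem stated above, applied to the disk-insertion action of $\mathcal{G}:=\Rep_qG\boxtimes\Rep_qT\boxtimes\Rep_qT$ on $\SkCat(\DD_2)\cong\Rep_qB\otimes_{\Rep_qG}\Rep_qB$ (Lemma~\ref{lemma:defect-disk-gives-B} and excision), whose associated monad is $\SkAlg^{int}_{G\times T^2}(\DD_2)=\underline{\End}_{\mathcal{G}}(\idty_B\boxtimes\idty_B)$. First I would note that part~(2) of that theorem applies to the localized module category $\widehat{\SkCat}(\DD_2)[A_\lambda^{-1}]$ once we check that the right adjoint $\underline{\Hom}(\idty_B\boxtimes\idty_B,-)$ to disk insertion is conservative and colimit preserving after this localization; it then follows that the comparison functor restricts to an equivalence $\widehat{\SkCat}(\DD_2)[A_\lambda^{-1}]\cong\SkAlg^{int}_{G\times T^2}(\DD_2)[A_\lambda^{-1}]-\mod_{G\times T^2}$, which is the first assertion.

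For conservativity I would use that, by Lemma~\ref{lem:monogon} applied to each of the two $\DD_B$-factors together with \cite[Theorem 3.49]{JLSS2021}, the functor $\underline{\Hom}(\idty_B\boxtimes\idty_B,-)$ is already conservative on all of $\widehat{\SkCat}(\DD_2)$. Since each $A_\lambda$ is a normal (Ore) element of $\SkAlg^{int}_{G\times T^2}(\DD_2)$, restriction to the subcategory on which the $A_\lambda$ act invertibly is a flat localization compatible with $\underline{\Hom}(\idty_B\boxtimes\idty_B,-)$, so conservativity is inherited; moreover the restricted functor now takes values in modules over the localized algebra $\SkAlg^{int}_{G\times T^2}(\DD_2)[A_\lambda^{-1}]$, whose explicit description as a quantum cluster chart is \cite[Theorem 3.62]{JLSS2021}.

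The hard part is colimit preservation, and this is exactly what inverting the $A_\lambda$ buys us. Before localization the failure has the same origin as in Lemma~\ref{lem:monogon}: it reflects the non-affineness of $G/N$ inside $\widetilde{G/N}$, equivalently the fact that $\Rep_qB\hookrightarrow\widetilde\Rep_qB$ is the inclusion of the open subcategory of sheaves supported on $G/N$, whose transverse directions are invisible to compact projectives. The skein $A_\lambda$ quantises the flag coordinate (the left-hand skein of Figure~\ref{fig:kauffman-nontrivial}), so inverting all the $A_\lambda$ restricts attention to the open affine locus where these coordinates are non-vanishing; there the correspondence computing $\underline{\Hom}(\idty_B\boxtimes\idty_B,-)$ is cut out of an affine scheme by invertibility of a function, hence affine, so the right adjoint to disk insertion is a pushforward along a quasi-compact quasi-separated morphism and is therefore cocontinuous. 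I expect the genuinely delicate point of the whole argument to be matching this skein-theoretic localization with the open-locus geometry of $\widetilde{G/N}$ precisely enough to invoke \cite{JLSS2021} with compatible conventions; the rest is bookkeeping.

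Finally, for the $G$-invariants statement I would argue that after inverting the $A_\lambda$ the residual $\SL_2$-action has become free --- fixing a non-vanishing flag coordinate rigidifies the flag, and a generic flag has trivial stabiliser (see the footnote in the introduction on conjugating $m$ and $\ell$ into the standard Borel) --- so that $\SkAlg^{int}_{G\times T^2}(\DD_2)[A_\lambda^{-1}]$ is ``$G$-free'': taking $U_q\mathfrak{g}$-invariants is conservative, and the adjunction between $\Rep_qG\boxtimes\Rep_qT^2$-equivariant modules over it and $\Rep_qT^2$-modules over its $U_q\mathfrak{g}$-invariants is an equivalence (this too is part of \cite[Theorem 3.62]{JLSS2021}). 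Since closing the $G$-gate identifies $\SkAlg^{int}_{G\times T^2}(\DD_2)[A_\lambda^{-1}]^{U_q\mathfrak{g}}$ with $\SkAlg^{int}_{T^2}(\DD_2)[A_\lambda^{-1}]$ by definition, combining with the previous step yields the asserted equivalence $\SkAlg^{int}_{G\times T^2}(\DD_2)[A_\lambda^{-1}]^{U_q\mathfrak{g}}-\mod_{G\times T^2}\cong\SkAlg^{int}_{T^2}(\DD_2)[A_\lambda^{-1}]-\mod_{T^2}$.
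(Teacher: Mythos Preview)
The paper does not actually supply a proof of this lemma: it is stated with the citation \cite[Theorem 3.62]{JLSS2021} in its title and no \texttt{proof} environment follows. So there is nothing in the present paper to compare your argument against line by line; the authors are simply quoting a result established elsewhere.

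That said, your sketch follows the expected route: reduce to the monadic reconstruction theorem of Section~\ref{sec:monadic-reconstruction}, inherit conservativity from Lemma~\ref{lem:monogon}, and use the localisation at the $A_\lambda$ to gain colimit preservation (geometrically, passing to the open affine locus where the flag coordinate is non-zero). The $G$-invariants step is likewise the standard ``free action after rigidifying the flag'' argument. This is indeed how \cite{JLSS2021} proceeds. Where your write-up remains a sketch is exactly where you flag it: the identification of the skein-theoretic localisation with the open affine in $\widetilde{G/N}\times_G\widetilde{G/N}$, and the verification that the $U_q\mathfrak{g}$-action is genuinely free there (so that invariants are conservative and the module--invariants adjunction is an equivalence), are the substantive computations; you would need to carry those out rather than assert them to have a self-contained proof. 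None of this is wrong, but it is an outline rather than a proof, and the paper itself defers the content to \cite{JLSS2021}.
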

By $A_\lambda A_\mu = A_{\lambda+\mu}$ and Lemma \ref{lemma:defect-skein-unique}, we have for $G = \SL_2\CC$:
\begin{equation}
    \SkAlg^{int}_{T^2}(\DD_2)[A^{-1}_{\lambda}] \simeq \K\langle A_{1}^{\pm 1}\rangle.
\end{equation}
The generators $A_1^{\pm 1}$ have weight $\pm 1$ at the two gates.

\subsubsection{The triangle.}
We now give a version of Lemma \ref{lem:digon} for the triangle $\DD_3$ shown in Figure \ref{fig:basic-stratified-surfaces} and its skein category:
\[
\SkCat(\DD_3) \cong \Rep_qB\underset{\Rep_qG}{\otimes}\Rep_qB\underset{\Rep_qG}{\otimes}\Rep_qB.
\]

Label the $T$-regions $i=1,2,3$ in clockwise order and let $A_{(\lambda_1,\lambda_2,\lambda_3)} \in \SkAlg^{int}_{G\times T^3}(\DD_3)$ denote the skein coloured by the simple $U_q\mathfrak{t}$-representation $\lambda_i$ in the $i$th $T$-region.
By Lemma \ref{lemma:defect-skein-unique} and $A_{\underline{\lambda}}A_{\underline{\nu}} = q^{\frac{1}{2}\omega\left({\underline{\lambda},\underline{\nu}}\right)} A_{\underline{\lambda}+\underline{\nu}}$, the sub-algebra spanned by the $A_{\underline{\lambda}}$ is generated by $A_{12} := A_{(1,1,0)}$, $A_{13} := A_{(1,0,1)}$, and $A_{23} := A_{(0,1,1)}$.

\begin{lemma}[The triangle {\cite[Lemma 4.5]{JLSS2021}}]
The comparison functor 
\begin{equation}
    \SkAlg^{int}_{G\times T^3}(\DD_3)-\mod_{G\times T^3} \to \SkCat(\DD_3)
\end{equation}
becomes conservative and colimit preserving upon restriction to the open subcategory 
where the $A_{ij}$ actions are invertible. 
Moreover, the functor of taking $G$-invariants is also conservative on restriction to this open subcategory, and we obtain an equivalence
\[\SkAlg^{int}_{G\times T^3}(\DD_3)[A_{12}^{-1},A_{13}^{-1},A_{23}^{-1}]^{U_q\mathfrak{g}}-\mod_{G\times T^3} \cong \SkAlg^{int}_{T^3}(\DD_3)[A_{12}^{-1},A_{13}^{-1},A_{23}^{-1}]-\mod_{T^3}.\]

\end{lemma}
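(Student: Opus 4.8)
The plan is to run the monadic reconstruction theorem of Section~\ref{sec:monadic-reconstruction} for the triple relative tensor product $\SkCat(\DD_3)\cong\Rep_qB\otimes_{\Rep_qG}\Rep_qB\otimes_{\Rep_qG}\Rep_qB$, reducing the two delicate points --- colimit-preservation of the internal hom after localization, and conservativity of $U_q\mathfrak{g}$-invariants --- to the already established digon case, Lemma~\ref{lem:digon}. First I would use the identification $\SkAlg^{int}_{\G}(\Sigma)\cong\underline{\End}_{\G}(\mathrm{Dist}_\Sigma)$ together with Lemma~\ref{lemma:defect-skein-unique} to read off the graded description of $\SkAlg^{int}_{G\times T^3}(\DD_3)$, with summands $\bigl(V(\lambda_1)\otimes V(\lambda_2)\otimes V(\lambda_3)\bigr)^{*}\boxtimes\lambda_1\boxtimes\lambda_2\boxtimes\lambda_3$, and hence the presentation of the relevant spanning sub-algebra by the three skeins $A_{12},A_{13},A_{23}$.

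Next, conservativity of $\underline{\Hom}(\mathrm{Dist}_{\DD_3},-)$ follows from the same monogon argument that gives Lemma~\ref{lem:monogon}, since it is local to each of the three gates; so the monadic reconstruction theorem already supplies, unconditionally, a reflexive embedding of $G\times T^3$-module categories $\SkAlg^{int}_{G\times T^3}(\DD_3)-\mod \hookrightarrow \SkCat(\DD_3)$. The substance of the lemma is that this becomes an equivalence, compatibly with Hamiltonian reduction, once the $A_{ij}$ are inverted.

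The crux is the third step: I would exhibit $\DD_3$ as glued from digons along $G$-coloured arcs --- equivalently, cut $\DD_3$ along a $G$-arc running between two $B$-defects --- arranged so that each of $A_{12},A_{13},A_{23}$ restricts, after localization, to the invertible generator of a digon chart. By Lemma~\ref{lem:digon} each digon piece is monadically reconstructed both conservatively and colimit-preservingly once its $A$-generator is inverted; and since a relative tensor product over $\Rep_qG$ of colimit-preserving reconstructions is again colimit-preserving, the gluing formula for internal skein modules (Lemma~\ref{lemma:gluing-for-int-skein-modules}) upgrades the comparison functor for $\DD_3$ to an equivalence on the open subcategory where all three $A_{ij}$ act invertibly. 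Finally, on that subcategory the $U_q\mathfrak{g}$-action is ``free enough'' --- the $A_{ij}$ form an Ore set of elements carrying nonzero weight at the gates --- so taking $U_q\mathfrak{g}$-invariants is conservative and exact there, exactly as in \cite{GJS2021}; combined with the previous step this yields the asserted equivalence with $\SkAlg^{int}_{T^3}(\DD_3)[A_{12}^{-1},A_{13}^{-1},A_{23}^{-1}]-\mod_{T^3}$.

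The hard part will be the same one flagged in the digon case: $\underline{\Hom}(\mathrm{Dist}_{\DD_3},-)$ is genuinely not colimit-preserving, so one must verify that inverting precisely the three skeins $A_{12},A_{13},A_{23}$ is the correct $\G$-chart in the sense of \cite[\S1.3]{JLSS2021} --- that is, that it excises exactly the locus of non-generic triples of flags --- and that this localization is compatible with gluing along $G$-arcs. Selecting the cutting arc, tracking how the three generators distribute among the digon pieces, and checking that the gluing preserves the relevant exactness is where the real work lies; the remainder is bookkeeping with machinery already developed above.
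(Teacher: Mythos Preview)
The paper does not give its own proof of this lemma: it is stated with the attribution \cite[Lemma 4.5]{JLSS2021} and no \texttt{proof} environment follows. The text immediately after the lemma merely records the resulting quantum torus presentation and commutation relations, not an argument. So there is nothing in this paper to compare your proposal against.

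That said, your strategy --- monadic reconstruction plus localization, reducing to the digon case via gluing along a $G$-arc --- is exactly in the spirit of how the surrounding material (Lemmas~\ref{lem:monogon} and~\ref{lem:digon}, and the annulus in Lemma~\ref{lemma:the-decorated-annulus}) is handled, and is presumably close to what \cite{JLSS2021} does. One caution: your gluing argument cuts $\DD_3$ into a digon and a monogon (or two digons sharing a $G$-arc), but you should be explicit about which pieces you obtain and check that the three generators $A_{12},A_{13},A_{23}$ really distribute so that each piece's localization condition is implied by inverting all three. In particular, one of the pieces will see two of the $A_{ij}$ meeting at a common $T$-region, and you need the digon lemma applied to the correct generator there; this is the bookkeeping you flag at the end, and it is genuinely the content of the argument rather than a formality.
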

The localisation $\SkAlg^{int}_{T^3}(\DD_3)[A_{12}^{-1},A_{13}^{-1},A_{23}^{-1}]$ is a rank three quantum torus generated by the $A^{\pm 1}_{ij}$ with commutation relations
\begin{equation}
    \begin{aligned}
        A_{12} A_{23} &= q^{1/2} A_{23}A_{12},& A_{23} A_{13} &= q^{1/2} A_{13}A_{23}, & A_{13} A_{12} &= q^{1/2} A_{12}A_{13}
    \end{aligned}
\end{equation}

The $\Rep_qT^3$ action is determined by $A_{ij}^{\pm 1}$ having weight $\pm 1$ at the $i$th and $j$th gate and weight zero elsewhere.

\subsubsection{The annulus}
The decorated annulus $\widetilde{Ann}$ shown in Figure \ref{fig:basic-stratified-surfaces} is not triangulated, which distinguishes it from the surfaces considered in \cite{JLSS2021}.
Its internal skein algebra nonetheless admits a quantum cluster chart, which we will now compute.
First, its skein category can be presented via excision as the Hochschild homology of the digon:
\[
\SkCat(\widetilde{Ann}) = \SkCat(\DD_2)\underset{\SkCat(\DD_2)\boxtimes \SkCat(\DD_2)^{op}}{\boxtimes} \SkCat(\DD_2).
\]
Let $\SkAlg^{int}_{T^2}(\widetilde{Ann})$ be the internal skein algebra obtained by opening a gate on each boundary component of the annulus.
We write $a,b$ for the two $T$-monodromies (see Figure \ref{fig:basic-stratified-surfaces} for their orientations) and $A_\lambda$ for the skein passing directly between the two $T$-regions, which is coloured by the simple $U_q\mathfrak{t}$ representation $\lambda$ in each $T$-region. (Again, recall that this implies $\lambda \in \Lambda^+$.)

\begin{lemma}\label{lemma:the-decorated-annulus}
The comparison functor is conservative and colimit preserving upon restricting to the open subcategory obtained by localising $A_\lambda$, giving a open embedding
\[
\SkAlg^{int}_{T^2}(\widetilde{Ann})[A_\lambda^{-1}]-\mod_{T^2}\hookrightarrow \SkCat(\widetilde{Ann}).
\]
The localisation $\SkAlg^{int}_{T^2}(\widetilde{Ann})[A_\lambda^{-1}]$ is a rank three quantum torus generated by $a^{\pm 1}, b^{\pm 1}, A_{1}^{\pm 1}$, with commutation relations
\begin{equation}
\begin{aligned}
       A_1 a &= q a A_1, & A_1 b &= qb A_1, & ab &= ba.
\end{aligned}
\end{equation}
Both $a$ and $b$ are $T$-invariant, while $A_1$ has weight $+1$ at both gates.

\end{lemma}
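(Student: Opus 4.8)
The plan is to derive the statement from the analogous result for the digon, Lemma~\ref{lem:digon}, using the excision presentation
\[
\SkCat(\widetilde{Ann}) = \SkCat(\DD_2)\underset{\SkCat(\DD_2)\boxtimes \SkCat(\DD_2)^{op}}{\boxtimes} \SkCat(\DD_2)
\]
recorded above together with its internal-skein-algebra incarnation supplied by Lemma~\ref{lemma:gluing-for-int-skein-modules}. As in the proof of Lemma~\ref{lem:digon}, it is convenient first to open an auxiliary $G$-gate in the middle $G$-region, so that $\SkCat(\widetilde{Ann})$ is a $\Rep_qG\boxtimes\Rep_qT^2$-module category and $\SkAlg^{int}_{G\times T^2}(\widetilde{Ann})$ is defined; one recovers $\SkAlg^{int}_{T^2}(\widetilde{Ann})$ at the end by passing to $U_q\mathfrak{g}$-invariants. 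Since $A_\lambda A_\mu = A_{\lambda+\mu}$ and, by Lemma~\ref{lemma:defect-skein-unique}, every $A_\lambda$ is a scalar multiple of a power of $A_1$, localising the family $\{A_\lambda\}$ is the same as localising $A_1$.

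For the monadic-reconstruction half of the statement the argument is formally identical to Lemma~\ref{lem:digon} and the triangle lemma. Applying Lemma~\ref{lem:monogon} to each $\DD_B$-factor shows that the internal-hom-from-the-unit functor $\SkCat(\widetilde{Ann})\to\widehat{\Rep_qG\boxtimes\Rep_qT^2}$ is conservative but not colimit preserving; the failure of colimit preservation is supported exactly on the locus where the $A_\lambda$ act non-invertibly, so after restricting to the open subcategory on which $A_1$ is inverted both properties hold, and Beck monadicity yields the equivalence with $\SkAlg^{int}_{G\times T^2}(\widetilde{Ann})[A_1^{-1}]-\mod_{G\times T^2}$. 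One then checks, exactly as in Lemma~\ref{lem:digon}, that the functor of $U_q\mathfrak{g}$-invariants is conservative and colimit preserving on this localised subcategory, giving the asserted open embedding $\SkAlg^{int}_{T^2}(\widetilde{Ann})[A_\lambda^{-1}]-\mod_{T^2}\hookrightarrow\SkCat(\widetilde{Ann})$.

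It remains to compute the localised internal skein algebra explicitly, which is the substantive step. Unwinding the excision presentation via Lemma~\ref{lemma:gluing-for-int-skein-modules} expresses $\SkAlg^{int}_{T^2}(\widetilde{Ann})[A_\lambda^{-1}]$ as a coend built from the digon's (localised, $G$-invariant) internal skein algebra, which by Lemma~\ref{lem:digon} is the commutative algebra $\K\langle A_1^{\pm1}\rangle$; the self-gluing coend introduces precisely the two core-parallel $T$-monodromies $a$ (outer $T$-region) and $b$ (inner $T$-region) as new generators, while $A_1$ descends from the digon. Equivalently, and more concretely, one can argue with the Kauffman--M\"uller presentation of Lemma~\ref{lemma:kauffman-presentation}: once $A_1$ is invertible, any arc entering the $G$-annulus can be slid onto the $B$-defect and removed using the trivial-dip relations of Figure~\ref{fig:kauffman-dips}, so that every skein reduces to a monomial in $A_1^{\pm1},a^{\pm1},b^{\pm1}$. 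The relations $A_1a=qaA_1$ and $A_1b=qbA_1$ are instances of the single-gate $T$-region commutation relations of Figure~\ref{fig:T-region-commutation-relations} at the outer and inner gate respectively, and $ab=ba$ since $a$ and $b$ occupy distinct $T$-regions separated by the $G$-annulus and may be isotoped into disjoint product neighbourhoods. The weights are immediate: $a$ and $b$ are closed loops meeting no gate, hence $T$-invariant, while $A_1$ terminates at each gate in the weight-$(+1)$ representation, hence has weight $+1$ at both.

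The main obstacle is the \emph{completeness} of this list of relations, i.e. verifying that $\SkAlg^{int}_{T^2}(\widetilde{Ann})[A_\lambda^{-1}]$ is \emph{free} on the monomials in $A_1^{\pm1},a^{\pm1},b^{\pm1}$ with no further relations among the monodromies. The diagrammatic reduction above establishes generation; for freeness the cleanest route is to note that the digon's localised internal skein algebra is free as a module over its subalgebra of $T$-invariants, so that the coend of Lemma~\ref{lemma:gluing-for-int-skein-modules} is computed on a basis of the expected monomials, or equivalently to invoke Lemma~\ref{lemma:central-quotient} after identifying the sublattice generated by the monodromy classes as unimodular and isotropic. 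A graded-dimension comparison, or specialisation at $q=1$ to the ring of functions on the classical decorated character stack of the annulus, provides an independent check.
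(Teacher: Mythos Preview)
Your approach is broadly in the right spirit but diverges from the paper's in a way that makes one step harder than it needs to be. The paper does not run monadic reconstruction directly on $\widetilde{Ann}$. Instead it observes that $\widetilde{Ann}\cong\DD_2\times S^1$ and computes by taking factorization homology over $S^1$ of the \emph{already localised} digon category from Lemma~\ref{lem:digon}. The crucial structural input, which your proof never invokes, is that $\SkAlg^{int}_{T^2}(\DD_2)[A_\lambda^{-1}]$ is a \emph{braided commutative} algebra object; this is what licenses the identity
\[
\int_{S^1}\bigl(\SkAlg^{int}_{T^2}(\DD_2)[A_\lambda^{-1}]\text{-}\mod_{\Rep_qT^{\boxtimes 2}}\bigr)\;\simeq\;\SkAlg^{int}_{T^2}(\DD_2)[A_\lambda^{-1}]\text{-}\mod_{\int_{S^1}\Rep_qT^{\boxtimes 2}},
\]
after which monadic reconstruction on the base $\int_{S^1}\Rep_qT^{\boxtimes 2}$ supplies the two annular $T$-monodromies $a,b$ and nothing else. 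In particular, the open embedding and the conservativity/colimit-preservation after localisation are inherited from the digon via this equivalence, rather than being argued afresh at the annulus level. Your sentence ``the failure of colimit preservation is supported exactly on the locus where the $A_\lambda$ act non-invertibly'' is asserted without proof for $\widetilde{Ann}$; the paper avoids having to justify this by pushing the localisation through before taking $\int_{S^1}$.

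This also explains why you found the \emph{completeness} of the relations to be ``the main obstacle'': in the paper's route it is automatic. Once the category is identified with modules over $\K\langle A_1^{\pm1}\rangle$ internal to $\K[a^{\pm1},b^{\pm1}]\text{-}\mod_{T^2}$, the rank-three quantum torus description and the absence of further relations fall out immediately, with the $q$-commutations read off from the $T$-crossing relations exactly as you say. Your diagrammatic reduction and graded-dimension check would work, but they are replacing a one-line categorical argument with something more laborious.
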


\begin{proof}
We use the fact that this bipartite annulus is the digon $\DD_2$ times a circle.
Taking the factorisation homology of the localised skein category of the digon, applying Lemma \ref{lem:digon} and braided commutativity of the algebra $\SkAlg^{int}_{T^2}(\DD_2)$, we compute:
\begin{equation}\label{eq:annulus-category}
    \int_{S^1} \left( \SkAlg^{int}_{T^2}(\DD_2)[A_\lambda^{-1}]-\mod_{\Rep_qT\boxtimes \Rep_q T} \right) \simeq \left( \SkAlg^{int}_{T^2}(\DD_2)[A_\lambda^{-1}] \right)-\mod_{\int_{S^1}(\Rep_qT\boxtimes \Rep_q T)}.
\end{equation}
Applying monadic reconstruction to $\int_{S^1} \Rep_q T \boxtimes \Rep_q T$, we get
\begin{equation}
\begin{aligned}
    \int_{S^1} \Rep_q T \boxtimes \Rep_q T &\simeq \SkAlg^{int}_{T}(Ann)\boxtimes \SkAlg^{int}_T(Ann) -\mod_{\Rep_q T \boxtimes \Rep_qT}, \\
    & \simeq \K [ a^{\pm1},b^{\pm1}] -\mod_{\Rep_q T\boxtimes \Rep_qT}
\end{aligned}
\end{equation}
where here $Ann$ is the $T$-coloured annulus with no defects.
It follows from the $T$-region crossing relations (Figure \ref{fig:kauffman-crossings}) that $A_1 a = q a A_1$ and $A_1 b = q b A_1$.
An object in \eqref{eq:annulus-category} must therefore have compatible $T^2$-equivariant actions of both $\K[A_\lambda^{\pm 1}]$ and $\K[a^{\pm 1}, b^{\pm 1}]$.
We conclude that
\begin{equation}
\SkAlg^{int}_{T^2}(\widetilde{Ann})[A_\lambda^{-1}]-\mod_{T^2} \simeq \K\langle a^{\pm 1} , b^{\pm 1}, A_1^{\pm 1}\mid A_1 a = q a A_1, A_1 b = q b A_1\rangle-\mod_{T^2}
\end{equation}
\end{proof}

\subsubsection{The four-punctured sphere}\label{sec:ideal-tet-boundary}
Building on the digon and the triangle, one the main results of \cite{JLSS2021} is a construction of a quantum cluster chart for any simple decorated surface equipped with a triangulation.
We recall the four-punctures sphere as a special case, which will feature prominently in Section \ref{sec:quantum-A-polynomial}.

Let $M_{tet}$  denote the closed ball $B^3 \subset \mathbb{R}^3$ stratified and labelled as follows.
Pick four distinct points on the boundary and label a contractible 3-dimensional neighbourhood of each by $T$.
Label the rest of $M$ by $G$.
This is our model for a $G$-labelled tetrahedron with a small $T$-region surrounding each vertex.
Let $\TetSurf:= \left(\partial M_{tet}\right) \setminus (\DD_T^{\sqcup 4})$ be the four-punctured sphere with an annular $T$-region surrounding each puncture, as shown in Figure \ref{fig:basic-stratified-surfaces}.

We equip each boundary component with three gates.
This is two gates per puncture \emph{more} than is required for Lemma \ref{lemma:muller-applies}, but facilitates gluing tetrahedra along faces.
The 18 edges of the truncated tetrahedron determine a distinguished set $\triangle_{tet}$ of elements in $\SkAlg^{int}_{\Rep_qT^{\boxtimes 12}}(\TetSurf)$.

Next, let $\WW_{tet}$ be the quantum tori with underlying lattice $\Lambda_{tet} \simeq \ZZ^{18}$ defined as follows.
Fix an ordered basis $\{e_{01},e_{02},e_{03},e_{10},e_{13},e_{12},e_{20},e_{21},e_{23},e_{30},e_{32},e_{31},E_{01},E_{02},E_{03},E_{12},E_{13},E_{23}\}$ for $\Lambda_{tet}$, so 
that the product structure on $\WW_{tet}$ is determined by the skew symmetric matrix $\Omega_{tet}$:
\begin{equation}\label{eq:tet-communtation-relations}
\Omega_{tet} :=
\left(
\begin{gathered}
    \begin{tikzpicture}
    \draw[dashed]
        (.8,-.8) rectangle ( 0,0) node[pos=.5] {$\Omega_{sh}$}
        (2*.8,-2*.8) rectangle (1*.8,-1*.8) node[pos=.5] {$\Omega_{sh}$}
        (3*.8,-3*.8) rectangle (2*.8,-2*.8) node[pos=.5] {$\Omega_{sh}$}
        (4*.8,-4*.8) rectangle (3*.8,-3*.8) node[pos=.5] {$\Omega_{sh}$}
        (4.15*.8,0) rectangle (6*.8,-4*.8) node[pos=.5] {$\Omega_{sh,lg}$}
        (0,-4.15*.8) rectangle (4*.8,-6*.8) node[pos=.5] {$-\Omega_{sh,lg}^\perp$}
        ;
    \end{tikzpicture}
    \end{gathered}
    \right), %
    \Omega_{sh,lg} := \left(\begin{array}{rrrrrr}
    0 & 1 & 1 & 0 & 0 & 0 \\
    1 & 0 & 1 & 0 & 0 & 0 \\
    1 & 1 & 0 & 0 & 0 & 0 \\
    0 & 0 & 0 & 1 & 1 & 0 \\
    1 & 0 & 0 & 1 & 0 & 0 \\
    1 & 0 & 0 & 0 & 1 & 0 \\
    0 & 0 & 0 & 1 & 0 & 1 \\
    0 & 1 & 0 & 0 & 0 & 1 \\
    0 & 1 & 0 & 1 & 0 & 0 \\
    0 & 0 & 0 & 0 & 1 & 1 \\
    0 & 0 & 1 & 0 & 1 & 0 \\
    0 & 0 & 1 & 0 & 0 & 1
    \end{array}\right)
\end{equation}
where $\Omega_{sh} := \left(\begin{smallmatrix} 0 & 1 & -1 \\ -1 & 0 & 1 \\ 1 & -1 & 0\end{smallmatrix}\right)$.

\begin{lemma}{\cite[Prop 4.11]{JLSS2021},\cite[Thm 6.14]{Muller_2016}}\label{lemma:muller-applies}
    The map \begin{equation}
    \phi : \WW_{tet} \to \SkAlg^{int}_{T^{12}} (\TetSurf)[\triangle_{tet}^{-1}] 
\end{equation}
sending $X^{\pm e_{ij}} \mapsto a_{ij}$ and $X^{\pm E_{ij}} \mapsto A_{ij}^{\pm 1}$ is an isomorphism.
\end{lemma}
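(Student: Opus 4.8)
The plan is to recognize $\TetSurf$, together with its four boundary annuli and the chosen gates, as a triangulated decorated surface in the sense of \cite{JLSS2021}, and then to deduce the statement from the general quantum cluster chart construction of \cite[Prop~4.11]{JLSS2021} together with the quantum trace results of \cite[Thm~6.14]{Muller_2016}. The only genuinely new work is to match conventions: to check that the skew form on $\Lambda_{tet}$ produced by the defect skein relations is exactly $\Omega_{tet}$, and that using three gates per puncture rather than the single gate needed for monadic reconstruction does not change the localized invariant algebra.

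First I would fix the triangulation. The four punctures of $\TetSurf$ are the ideal vertices of an abstract tetrahedron; the six long arcs $E_{ij}$, each running through a $G$-region between two boundary annuli, are its edges; and cutting $\TetSurf$ along these six arcs exhibits it as four ideal triangles glued along their sides. By excision (Lemma~\ref{lemma:skfun-glues}) and the digon and triangle lemmas (Lemma~\ref{lem:digon} and \cite[Lemma~4.5]{JLSS2021}), monadic reconstruction for $\TetSurf$ becomes conservative and colimit-preserving after inverting the edge skeins $\triangle_{tet}$, so that $\SkAlg^{int}_{T^{12}}(\TetSurf)[\triangle_{tet}^{-1}]$ is recovered as the associated internal endomorphism algebra. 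Müller's theorem in the decorated form of \cite[Prop~4.11]{JLSS2021} then identifies this localized algebra with a quantum torus whose underlying lattice has a basis indexed by the eighteen edges of the truncated tetrahedron, the generators being the edge skeins themselves (here one uses Lemma~\ref{lemma:defect-skein-unique} together with the relation $A_{\underline{\lambda}}A_{\underline{\nu}}=q^{\frac{1}{2}\omega(\underline{\lambda},\underline{\nu})}A_{\underline{\lambda}+\underline{\nu}}$ to reduce all edge skeins to monomials in the $a_{ij}$ and $A_{ij}$ colored by fundamental weights). Consequently $\phi$ is automatically injective and surjective, provided it sends generators to generators — which is immediate from its definition.

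It therefore remains to verify the commutation relations, i.e.\ that the skew form on $\Lambda_{tet}$ is $\Omega_{tet}$. Edge skeins that can be isotoped to be disjoint commute; in particular any two long edges commute, and short edges around distinct punctures commute, which gives the vanishing long--long and off-diagonal short--short blocks of $\Omega_{tet}$. For two short edges around a common puncture, or for a short and a long edge sharing a puncture, the $T$-region crossing relations of Figure~\ref{fig:kauffman-crossings} together with the gate relations of Figure~\ref{fig:T-region-commutation-relations} yield a factor $q^{\pm 1/2}$ depending only on the cyclic order of the endpoints at the shared gate(s); evaluating this factor puncture by puncture reproduces the blocks $\Omega_{sh}$ and $\Omega_{sh,lg}$. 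Finally one checks that passing to the invariant subalgebra for the two superfluous gates at each puncture changes nothing after localization, since every skein ending at such a gate is already a monomial in the $a_{ij}$ and $A_{ij}$. I expect this last bookkeeping — keeping signs, orientations, and gate orderings consistent throughout so that the local $q^{\pm 1/2}$ factors assemble into precisely $\Omega_{tet}$ — to be the only real obstacle; conceptually the lemma is a direct specialization of the cited results.
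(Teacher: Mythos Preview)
Your proposal is correct and follows essentially the same strategy as the paper: check that the commutation relations among the eighteen edge skeins reproduce $\Omega_{tet}$ (so that $\phi$ is a homomorphism), then invoke M\"uller's theorem in its internal/stated-skein incarnation to identify the localized algebra with a quantum torus on those edges. The paper is terser --- it cites \cite{H2021} for the stated/internal equivalence rather than unwinding monadic reconstruction through the digon and triangle lemmas --- but the content is the same.

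One caution on your last step: the phrase ``passing to the invariant subalgebra for the two superfluous gates \ldots\ changes nothing'' is not the right framing, since closing gates strictly shrinks the algebra. What is actually needed (and what your parenthetical ``every skein ending at such a gate is already a monomial in the $a_{ij}$ and $A_{ij}$'' correctly supplies) is the opposite direction: the eighteen arcs \emph{generate} the full twelve-gate localized algebra even though they do not constitute a complete M\"uller triangulation for twelve marked points. The paper states this directly: any two skeins incident to different gates on the same $T$-region puncture are related by a product of short edges, so the missing triangulation arcs already lie in $\WW_{tet}$.
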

\begin{proof}
    The skew-symmetric matrix $\Omega_{tet}$ was constructed exactly so that $\phi$ would be an homomorphism.
    The relations are obtained from the commutation relations in Figure \ref{fig:T-region-commutation-relations} as applied to the elements of $\triangle_{tet}$ shown and given a total order in Figure \ref{fig:gen-ideal-tet}.
Namely, two edges commute if they have no shared vertex, otherwise their exact commutation relation depends on their weights at their shared gate.

 This lemma then reduces to \cite[Thm 6.14]{Muller_2016}, after noting that the stated skeins of \cite{Muller_2016} are isomorphic to our internal skein algebras by \cite{H2021}.
   The other main difference is that unlike in Muller's theorem, we are not working full triangulation of $\TetSurf$, since we could add additional non-parallel non-intersecting long edges.
   However, any two skeins incident to different gates on the same $T$-region puncture are related by a product of short edges.
    Therefore the missing edges are contained in $\mathbb{W}_{tet}$.
\end{proof}

Note that the short edges $a_{ij}$ are already invertible in $\SkAlg^{int}_{T^{12}}(\TetSurf)$, so that the localisation only introduces formal inverses of the long edges $A_{ij}$

\paragraph{The internal skein module for the ideal tetrahedron}
To power our computations in Section \ref{sec:quantum-A-polynomial}, we now write the skein module of $M_{tet}$ as a quotient of quantum torus $\mathbb{W}_{tet}$.
\begin{lemma}\label{lemma:ideal-tet-presentation}
   The localized submodule generated by the empty skein $\SkAlg^{int}(\TetSurf)[\triangle_{tet}^{-1}]\cdot \varnothing \subset \SkMod^{int}(M_{tet})[\triangle_{tet}^{-1}]$ is isomorphic to $\mathbb{W}_{tet}$ quotiented by the left ideal generated by the element
\begin{equation}\label{eq:crossing-relation}
    q^{5/2} (A_{03} a_{32} a_{01}) (A_{12} a_{23} a_{10}) + q^{3/2} (A_{01} a_{03}^{-1} a_{12}^{-1}) (A_{23} a_{21}^{-1} a_{30}^{-1}) + q^{3/2}A_{13}A_{02},
\end{equation}

Where the variables are as shown in Figure \ref{fig:gen-ideal-tet}.
\end{lemma}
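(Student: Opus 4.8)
The plan is to realise the submodule as a cyclic quotient and to pin down the relations. By Lemma~\ref{lemma:muller-applies} the localised boundary algebra $\SkAlg^{int}_{T^{12}}(\TetSurf)[\triangle_{tet}^{-1}]$ is the quantum torus $\mathbb{W}_{tet}$, so acting on the empty skein gives a surjection of left $\mathbb{W}_{tet}$-modules $\rho:\mathbb{W}_{tet}\twoheadrightarrow \SkAlg^{int}(\TetSurf)[\triangle_{tet}^{-1}]\cdot\varnothing$, $w\mapsto w\cdot\varnothing$, whose kernel $I$ is a left ideal. It therefore suffices to prove $I=\mathbb{W}_{tet}R$, where $R$ denotes the element on the left-hand side of \eqref{eq:crossing-relation}.

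For the inclusion $\mathbb{W}_{tet}R\subseteq I$, i.e. $R\cdot\varnothing=0$: the three monomials $A_{03}a_{32}a_{01}A_{12}a_{23}a_{10}$, $A_{01}a_{03}^{-1}a_{12}^{-1}A_{23}a_{21}^{-1}a_{30}^{-1}$ and $A_{13}A_{02}$ act on $\varnothing$ to produce three skeins in $M_{tet}$; the short-edge factors $a_{ij}^{\pm 1}$ are exactly the corrections moving the ends of the long edges among the three gates on each $T$-puncture and supplying the correct framings, so that the three resulting skeins are the three ways of joining the four $T$-punctures of $\TetSurf$ by $V$-coloured arcs through the ball $M_{tet}$. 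Isotoping these arcs so that they realise, respectively, $\mathrm{id}_{V\otimes V}$, a cup-cap, and the braiding $\sigma_{V,V}$ inside a small coordinate ball, and then closing off to the punctures, the $\SL_2$ Kauffman relation of Figure~\ref{fig:kauffman-crossings} (equivalently $V^{\otimes 2}\cong\idty\oplus V(2)$ at generic $q$) writes the braiding term as a $\C_q$-linear combination of the other two. Tracking the Kauffman coefficients against the Reidemeister-I twists absorbed during the isotopy produces precisely the scalars $q^{5/2}$, $q^{3/2}$, $q^{3/2}$, so that $R\cdot\varnothing=0$. This is the one genuine computation, which I would organise exactly as in Figure~\ref{fig:gen-ideal-tet}.

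For the reverse inclusion $I\subseteq\mathbb{W}_{tet}R$, the idea is to check that $\mathbb{W}_{tet}/\mathbb{W}_{tet}R$ and $\SkAlg^{int}(\TetSurf)[\triangle_{tet}^{-1}]\cdot\varnothing$ carry $\C_q$-bases corresponding under $\rho$. On the algebraic side, $A_{13}A_{02}$ is a monomial, hence a unit of the quantum torus $\mathbb{W}_{tet}$, so $\mathbb{W}_{tet}R=\mathbb{W}_{tet}\bigl(1+X^{v_1}+X^{v_2}\bigr)$ for two explicit lattice vectors $v_1,v_2\in\Lambda_{tet}$; since $\mathbb{W}_{tet}$ is a domain, a diamond-lemma reduction with respect to the rank-two subalgebra generated by $X^{v_1},X^{v_2}$, in the spirit of Lemma~\ref{lemma:central-quotient}, exhibits $\mathbb{W}_{tet}/\mathbb{W}_{tet}R$ as a free $\C_q$-module on an explicit family of monomials. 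On the topological side, the Kauffman relations in the contractible $G$-region and in the contractible $T$-regions of $M_{tet}$ reduce any stratified ribbon graph to a combination of "standard" skeins running along the $1$-skeleton of the truncated tetrahedron, i.e. to images of monomials of $\mathbb{W}_{tet}$, and the only relation among these not already imposed by the quantum-torus commutation relations is the exchange move computed above. Matching the two families of generators then identifies the kernels.

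The step I expect to be hardest is the upper bound on the topological side: verifying that "reduction to standard position" introduces no relations beyond $R$ and the commutation relations, equivalently that the monomials spanning $\mathbb{W}_{tet}/\mathbb{W}_{tet}R$ map under $\rho$ to a $\C_q$-linearly independent family in $\SkMod^{int}(M_{tet})[\triangle_{tet}^{-1}]$. I would establish this by a leading-term argument: distinct monomial classes are separated by a complete system of geometric invariants (the algebraic intersection numbers of the skein with the hexagonal and triangular faces of the truncated tetrahedron, together with the $T$-region winding numbers), and the available skein relations preserve these invariants modulo the single exchange encoded by $R$; alternatively, one may instead compute $\SkMod^{int}(M_{tet})$ from the building blocks of Section~\ref{sec:building-blocks} via the gluing Lemma~\ref{lemma:skfun-glues} and read off its size directly. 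Either route yields the required linear independence and completes the identification $I=\mathbb{W}_{tet}R$.
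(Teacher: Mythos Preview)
Your setup and the inclusion $\mathbb{W}_{tet}R\subseteq I$ are fine and essentially match the paper's approach. The divergence is in the reverse inclusion, which you only sketch: neither the leading-term/intersection-number argument nor the ``glue from building blocks'' route is carried out, and the first in particular would require real work to make precise (you would have to check that your proposed invariants really are constant on isotopy classes modulo the skein relations and that they distinguish all monomial classes modulo $R$).

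The paper bypasses this difficulty entirely with a geometric observation you missed: $M_{tet}\cong \DD_4\times[0,1]$, where $\DD_4$ is the disk with four contractible $T$-regions on its boundary (the quadrilateral). Hence $\Sk^{int}(M_{tet})\cong\SkAlg^{int}(\DD_4)$ as a left module over itself, and the kernel $I$ is identified with the kernel of the restriction $\SkAlg^{int}(\TetSurf)[\triangle_{tet}^{-1}]\to\SkAlg^{int}(\DD_4)[\triangle_{02}^{-1}]$. By Muller's theorem applied to the quadrilateral with its triangulation $\triangle_{02}=\{B_{01},B_{02},B_{03},B_{12},B_{23}\}$, the target is already a quantum torus, and the one missing generator satisfies the quantum Ptolemy relation $B_{13}=B_{02}^{-1}\bigl(q^{1/2}B_{03}B_{12}+q^{-1/2}B_{01}B_{23}\bigr)$. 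The only remaining work is a coordinate change: the paper writes down explicit $A'_{ij}$ (each a long edge times short-edge corrections, e.g.\ $A'_{03}=qA_{03}a_{32}a_{01}$) generating the sub-quantum-torus corresponding to four gates, checks $A'_{ij}\mapsto B_{ij}$ up to a framing factor $-q^{3/2}$ on $A'_{13}$, and reads off \eqref{eq:crossing-relation} directly from Ptolemy.

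So the paper's route is both shorter and more robust: it reduces a $3$-manifold computation to a known $2$-dimensional presentation, whereas your route attempts to reprove that presentation inside the $3$-manifold. If you want to keep your approach, you should at minimum replace the vague independence argument by the observation that $\DD_4\times[0,1]$ retracts to $\DD_4$, which is exactly what the paper exploits.
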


Note that restricting to the submodule generated by the empty skein corresponds to the restriction to the image of eigenvalue map in the construction of the classical A-polynomial.

\begin{figure}
  \begin{center}
    \includegraphics[]{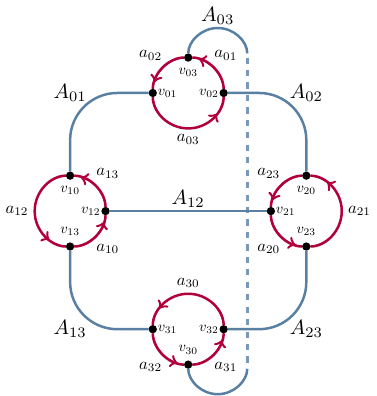}
    \caption{ The full set of short and long edges which generate $\mathbb{W}_{tet}$. The long (blue) edges have weight +1 at each gate and pass between $T$-regions. The short (red) edges are oriented to go from their weight +1 end to their weight -1 end and remain in a single $T$ region. When relevant for matrix computations, the generators are ordered as follows: $a_{01},a_{02},a_{03},a_{10},a_{13},a_{12},a_{20},a_{21},a_{23},a_{30},a_{32},a_{31},A_{01},A_{02},A_{03},A_{12},A_{13},A_{23}.$
    Labels for the twelve vertices/gates are also shown.}\label{fig:gen-ideal-tet}
  \end{center}
\end{figure}

\begin{proof}
By assumption the module is cyclic, and by Lemma \ref{lemma:muller-applies},
\begin{equation}
    \SkAlg^{int}_{T^{12}}(\TetSurf)[\triangle_{tet}^{-1}] \cong \mathbb{W}_{tet}.
\end{equation}
Hence $\SkAlg^{int}_{T^{12}}(\TetSurf)[\triangle_{tet}^{-1}]\cdot \varnothing \cong \mathbb{W}_{\triangle_{tet}}/ I$ for some left ideal $I$.

Let $\DD_4$ be the disk with four disjoint contractable $T$-regions along its boundary, so that
 $\DD_4\times [0,1] \cong M_{tet}$.
We consider a single gate in each $T$-region, labelled $0,1,2,3$ in clockwise order in both $\DD_4$ and $M_{tet}$, and note that 
\begin{equation}
    \Sk^{int}(M_{tet}) \cong \Sk^{int}(\DD_4\times I) \cong \SkAlg^{int}(\DD_4)\cdot \varnothing_{\DD_4\times I}^{} 
\end{equation}
as left $\SkAlg^{int}(\DD_4)$ modules\footnote{The action on $M_{tet}$ is induced by the embedding $\DD_4 \hookrightarrow \TetSurf \simeq \DD_4 \cup \DD_4$.}

Let $\triangle_{02} := \{B_{01},B_{02},B_{03},B_{12},B_{23}\} \subset \SkAlg^{int}_{T^4}(\DD_4)$ be a triangulation of the quadrilateral.
It follows from Lemma \ref{lemma:muller-applies}, $\SkAlg^{int}_{T^4}(\DD_4\times I)[\triangle_{02}^{-1}]$ is generated by $\triangle_{02}\cup \triangle_{02}^{-1}$, and from Lemma \ref{lemma:kauffman-presentation} that $B_{13} = B_{02}^{-1}\left(q^{1/2}B_{03}B_{12} +  q^{-1/2} B_{01}B_{23} \right)$ in the localization. 

To get the exact form of \eqref{eq:crossing-relation}, we identify the four-gate localized internal skein module of $M_{tet}$ with the sub-quantum torus of $\mathbb{W}_{{tet}}$ generated by 
\begin{equation}
    \begin{aligned}
        A'_{01} &:= q A_{01}a_{03}^{-1}a_{12}^{-1} & A'_{02} &:= A_{02} & A'_{03} &:= q A_{03}a_{32}a_{01} \\
        A'_{12} &:= q A_{12}a_{23}a_{10} & A'_{13} &:= A_{13} & A'_{23} &:= q A_{23} a_{21}^{-1}a_{30}^{-1}
    \end{aligned}
\end{equation}
and note that the isomorphism between the internal decorated skein modules for $M_{tet}$ and $\DD_4\times I$ is given by
\begin{equation}
    A'_{ij} \mapsto \begin{cases}
        -q^{3/2} B_{13} & ij = 13 \\
        B_{ij} & \text{otherwise.}
    \end{cases}
\end{equation}
where $A'_{13}$ picks up the factor of $-q^{3/2}$ due to the framing difference between $A_{13}'$ and $B_{13}.$

\end{proof}

\section[The localised quantum A-ideal]{The localised quantum $A$-ideal}\label{sec:quantum-A-polynomial}

In this section we explain how a triangulated knot complement $M_K:= S^3\setminus K$ determines a localization of the decorated skein module $\SkMod(\knotcomp)$ which defines the quantum A-polynomial.
Worked examples will be given in Section \ref{sec:worked-examples}.

\subsection[Quantum cluster charts]{Quantum cluster charts for $\SkAlg^{int}(\Sigma_\triangle)$ and $\SkMod^{int}(\Mbulk)$}\label{sec:quantum-cluster-charts}

Fix an ideal triangulation $\Delta$ of the knot complement $\knotcomp$, let $t$ denote the number of tetrahedra.
The union of the knot and the one-skeleton of the triangulation determines an is an embedded graph $K\cup\skel_1(\triangle) \hookrightarrow S^3$
Let $\Mbulk$ be the complement of a tubular neighbourhood of this graph.
In the introduction $\knotcomp$ is bi-partitioned by $B$-defect parallel to its boundary, separating a $T$-region tubular neighbourhood of its boundary from a $G$-region bulk.
We use the natural embedding $\Mbulk \hookrightarrow M_K$ to induce a bipartite structure on $\Mbulk$ and its boundary $\Sigma_\triangle$.

Note that $\Sigma_\triangle$ is a genus $t+1$ surface with the stratification $\Sigma_\triangle = \left(\Sigma_{2,2t}\right)_T \bigcup_{(S^1)^{\sqcup\, 2t}} \left( Ann^{\sqcup \,t}\right)_G$ and $4t$ punctures in its $T$-region (so that the $T$ region is $\Sigma_{2,6t}$.) Here $Ann\cong S^1\times[0,1]$. See Figure \ref{fig:the-big-surface}.

We construct a quantum cluster chart $\mathbb{W}_\triangle$ for the internal decorated skein algebra of $\Sigma_\triangle$.
Having done this, we give a compatible cluster chart for the internal skein module of $\Mbulk$.
Let $t$ be the number of tetrahedra in $\triangle$.

By Lemma \ref{lemma:ideal-tet-presentation}, each ideal tetrahedra has an associated rank 18 quantum torus $\mathbb{W}_{tet}$ with underlying lattice $\Lambda_{tet}$ and q-commutation relations given by the skew symmetric matrix $\Omega_{tet}$.
The generators of $\Lambda_{tet}$ consist of 12 short edges and 6 long edges.
Let $P_{sh} = \{ (e_1,f_1,) ,\ldots , (e_{6t},f_{6t})\}$ be the the pairs of coordinates whose corresponding short edges are identified by the ideal triangulation $\triangle$. 
Denote by $e_i^{+}$ and $e_i^{-}$ (resp. $f_i^+, f_i^i$) the weight +1 and -1 vertices $e_i$ (resp. $f_i$.)

We construct a rank $12t$ lattice $\Lambda_{th}$ of \emph{threads} with generators
\begin{equation}
     \theta_{e_1},\theta_{f_1},\ldots,\theta_{e_{6t}},\theta_{f_{6t}}.
\end{equation}
The thread $\theta_{e_i}$ (resp. $\theta_{f_i}$) has weight +1 at $e_i^-$ (resp. $f_i^-$) and -1 at $f_i^+$ (resp. $e_i^+$.)
Similarly, let $P_{lg}:= \{(E_1,F_1),\ldots,(E_{6t},F_{6t})\}$ denote pairs of associated long edges. For each pair $(E_i,F_i) \in P_{lg}$, let $(\theta_{E_i},\theta_{F_i})$ be the unique pair of threads such that $E_i + \theta_{E_i} - F_i - \theta_{F_i}$ has weight zero.
Such a pair exists because if $E_i$ and $F_i$ are identified, then so are two pairs of their adjacent short edges. It's unique because no two threads have the same weight at the same gate.

\begin{figure}
    \centering
    \includegraphics[width=0.7\linewidth]{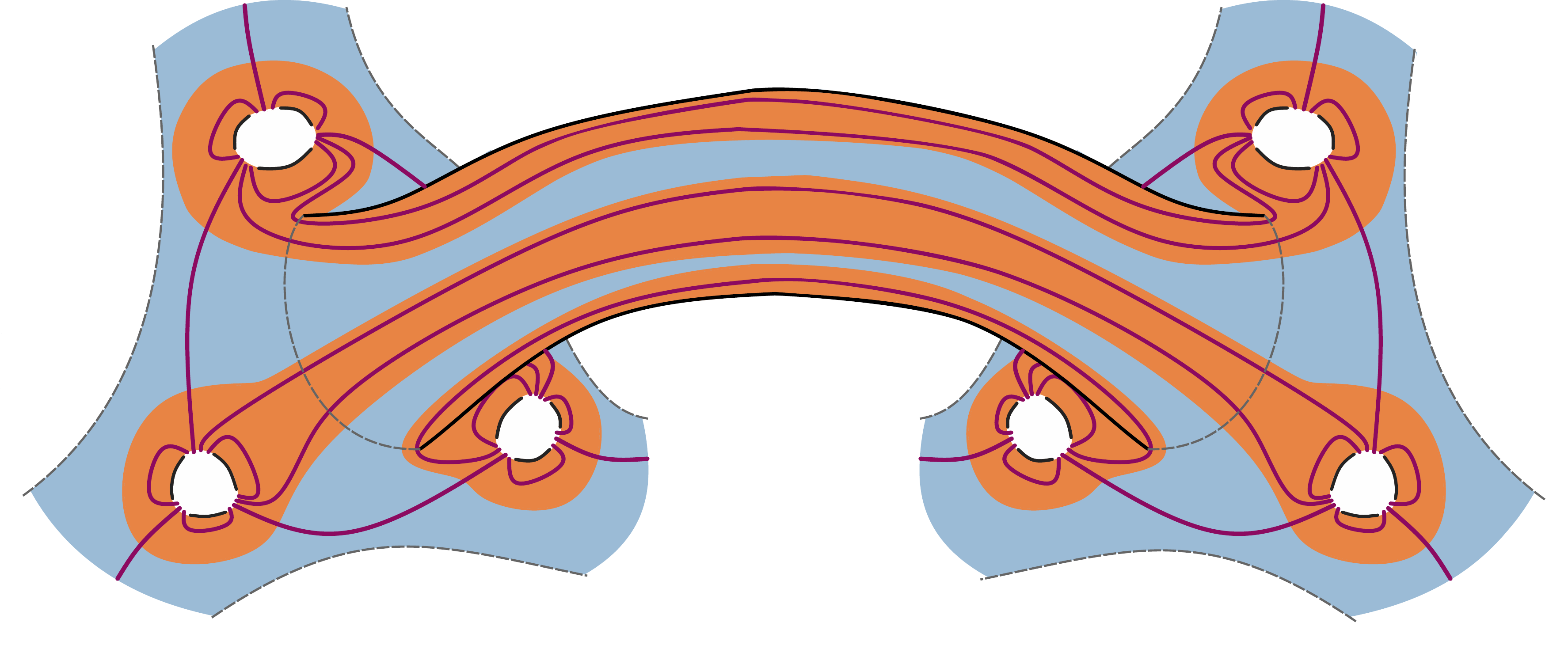}
    \caption{A portion of a surface with the gluing handle attachment shown. Long edges pass through $G$-regions (blue) to connect punctures, short edges surround punctures within a $T$-region (orange), and threads travel between tetrahedra.
    To reduce visual clutter we omit the $B$-defect separating the $G$-region and $T$-region. }
    \label{fig:threads}
\end{figure}

Let $\Lambda':=\Lambda_{th}\times \left( \Lambda_{tet}\right)^{\times t}$ be the lattice generated by all short edges, long edges, and threads.
We next define a $30t\times 30t$ matrix
\begin{equation}
\Omega' :=
\left(
\begin{gathered}
    \begin{tikzpicture}
    \draw[dashed]
        (.8,-.8) rectangle ( 0,0) node[pos=.5] {$\Omega_{tet}$}
        (3*.8,-3*.8) rectangle (2*.8,-2*.8) node[pos=.5] {$\Omega_{tet}$}
        (3.15*.8,0) rectangle (5*.8,-3*.8) node[pos=.5] {$\Omega_{tet,th}$}
        (0,-3.15*.8) rectangle (3*.8,-5*.8) node[pos=.5] {$-\Omega_{tet,th}^\perp$}
        (3.15*.8,-3.15*.8) rectangle (5*.8,-5*.8) node[pos=.5] {$\Omega_{th,th}$}
        ;
    \draw[very thick,loosely dotted,shorten >=4pt, shorten <=5pt] (.8,-.8) -- (2*.8,-2*.8);
    \end{tikzpicture}
    \end{gathered}
    \right) %
\end{equation}
which induces a skew-symmetric pairing on $\Lambda'$.
Here $\Omega_{tet,th}$ and $\Omega_{th,th}$ describe the commutation relations involving threads.

The matrix $\Omega_{tet,th}$ is determined by the value of the associated bilinear form $\langle -,\Omega'-\rangle:\Lambda'\times\Lambda' \to \ZZ$ on the sublattice  $\Lambda_{th}\times (\Lambda_{tet}^{\times t})$. These values are chosen precisely so that the algebra homomorphism $\WW_\triangle \to \SkAlg^{int}(\Sigma_\triangle)$ in the proof of Theorem \ref{thm:cluster-chart-for-sigma-tri} is well defined. 
For the long edges, we require that:
\begin{equation}
    \begin{aligned}
                \langle \theta_{E_j}, \Omega' E_j\rangle &=-1, &%
                \langle \theta_{E_j}, \Omega'F_j\rangle &=-1, &%
                \langle \theta_{F_j}, \Omega' F_j\rangle &=-1, &%
                \langle \theta_{F_j},\Omega' E_j\rangle &=-1 &
                 \qquad j=1,\ldots,6t.
    \end{aligned}
\end{equation}
All other long edges coordinates pair trivially with the threads.

For the short edges, we start with the values
\begin{equation}
    \begin{aligned}
        \langle \theta_{e_i},\Omega' e_i\rangle &= 1, &%
        \langle \theta_{e_i}, \Omega' f_i\rangle &= -1, &%
        \langle \theta_{f_i}, \Omega' f_i\rangle &= 1, &%
        \langle \theta_{f_i}, \Omega' e_i\rangle &= -1, & {i=1,\ldots,6t}.
    \end{aligned}
\end{equation}
Additionally, suppose that $e_\ell$ and $c$ are adjacent short edges, with $e_\ell^+ = c^-$. Since the ordering of each pair in $P_{sh}$ doesn't matter, we can assume that $c = f_k$ for some $k$. 
This implies that at $e^+_\ell$, the short edge $e_\ell$ is adjacent not only to $\theta_{e_\ell}$ but also $\theta_{e_k}$, imposing further q-commutation relations:
\begin{equation}\label{eq:rest-of-the-tet-th-q-commutation}
\begin{aligned}
    \langle \theta_{e_k},\Omega' e_{\ell}\rangle = -1,\qquad \text{for }\;\ell,k\; \text{ such that } e_\ell^+ = f_k^-.
\end{aligned}
\end{equation}
All other short edge coordinates pair trivially with the threads.

The matrix $\Omega_{th,th}$ is likewise constructed to facilitate the proof of Theorem \ref{thm:cluster-chart-for-sigma-tri} below.
Again consider the adjacent short edges $e_\ell$ and $f_k$. At their shared gate $e_\ell^+ = f_k^-$ there are exactly two incident threads\footnote{When $\ell=k$ this will be the two half-edges of a single thread, and we instead have $\langle \theta_{e_k},\Omega \theta_{e_\ell} \rangle = 0$.}, $\theta_{e_k}$ and $\theta_{e_\ell}$.
By construction $\theta_{e_k}$ has weight +1 at the common gate and $\theta_{e_\ell}$ has weight -1.
Their commutation relation will be determined by $\langle \theta_{e_k}, \Omega' \theta_{e_\ell}\rangle = 1$.
Since this is the only occasion in which threads meet at a gate, all other entries of $\Omega_{th,th}$ are zero.

\begin{figure}
    \centering
    \includegraphics[width=0.3\linewidth]{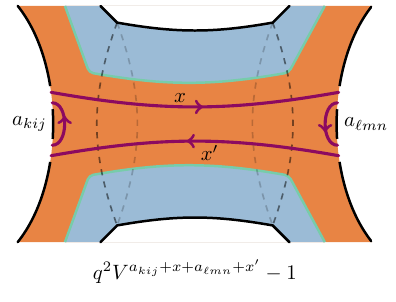}\hspace{1cm}
    \includegraphics[width=0.3\linewidth]{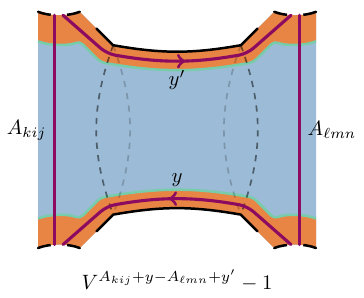}
    \caption{\textbf{Left:} The gluing relation identifying the pair $e_{kij},a_{\ell mn}$ of short edges. \textbf{Right:
    } The gluing relation identifying the pair $E_{kij},A_{\ell mn}$ of long edges. \\ Both types of relation rely on the addition of threads, labelled by $x,x',y,y'$ in the figure.
    }
    \label{fig:gluing-relations}
\end{figure}

\begin{lemma}\label{lemma:gluing-lattice-quotient}
    The sublattice $\Gamma_{gluing} \subset \Lambda'$ generated by 
    \begin{equation}
\begin{aligned}
      e_i + \theta_{e_i} + f_i + \theta_{f_i},&&  E_i + \theta_{E_i} - F_i - \theta_{F_i}, && i = 1,\ldots, 6t
\end{aligned}
\end{equation}
is unimodular and $\Gamma_{gluing} \subset \ker \Omega'$.
\end{lemma}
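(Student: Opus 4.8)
Write the two listed generators of $\Gamma_{gluing}$ as $g_i^{sh} := e_i + \theta_{e_i} + f_i + \theta_{f_i}$ and $g_i^{lg} := E_i + \theta_{E_i} - F_i - \theta_{F_i}$, for $i = 1,\dots,6t$. The plan is to treat the two claims — primitivity (``unimodular''), and annihilation by $\Omega'$ — separately.

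For unimodularity I would exhibit an explicit completion of the $12t$ generators to a $\ZZ$-basis of $\Lambda'$. Two combinatorial inputs are needed, both read off from the construction: $P_{sh}$ is a perfect matching of the $12t$ short-edge coordinates into the pairs $(e_i,f_i)$; and — the key point — the assignment $i\mapsto\{\theta_{E_i},\theta_{F_i}\}$ is a perfect matching of the $12t$ thread coordinates, so that $\{\theta_{E_i}\}_i$ and $\{\theta_{F_i}\}_i$ partition all threads. The latter holds because a thread $\theta_{e_j}$ is manufactured from the single short-edge identification $(e_j,f_j)$, which lies in a single face gluing of $\triangle$, and that face gluing produces exactly one identified long-edge pair incident to the gate at which $\theta_{e_j}$ terminates; the uniqueness clause (``no two threads have the same weight at the same gate'') then forces $\theta_{e_j}$ to appear as precisely one of $\theta_{E_i}$ or $\theta_{F_i}$, for a single $i$. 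Granting this, I claim
\[
\mathcal B := \{g_i^{sh}\}_{i=1}^{6t}\ \cup\ \{g_i^{lg}\}_{i=1}^{6t}\ \cup\ \{e_i\}_{i=1}^{6t}\ \cup\ \{\theta_{F_i}\}_{i=1}^{6t}\ \cup\ \{\text{the }6t\text{ long-edge coordinates}\}
\]
is a $\ZZ$-basis of $\Lambda'$. To see this, order the ``discarded'' standard generators as $(f_1,\dots,f_{6t},\theta_{E_1},\dots,\theta_{E_{6t}})$ and the ``new'' vectors as $(g_1^{sh},\dots,g_{6t}^{sh},g_1^{lg},\dots,g_{6t}^{lg})$; the relevant $12t\times 12t$ block of the change-of-basis matrix is lower block-triangular with identity diagonal blocks, because $g_i^{sh}$ has coefficient $\delta_{ik}$ on $f_k$, $g_i^{lg}$ has coefficient $0$ on every $f_k$, and $g_i^{lg}$ has coefficient $\delta_{ik}$ on $\theta_{E_k}$ (using $\theta_{F_i}\neq\theta_{E_k}$). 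Hence the whole change of basis is unimodular, so $\Gamma_{gluing}$ is a direct summand of $\Lambda'$; linear independence of the generators comes for free.

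For $\Gamma_{gluing}\subseteq\ker\Omega'$ it suffices to verify $\langle g,\Omega' w\rangle = 0$ for each generator $g$ of $\Gamma_{gluing}$ and each standard basis vector $w$ of $\Lambda'$. This is a finite bookkeeping computation from the explicit data: the $\Omega_{sh}$ and $\Omega_{sh,lg}$ entries of $\Omega_{tet}$; the thread pairings $\langle\theta_{e_i},\Omega' e_i\rangle=1$, $\langle\theta_{e_i},\Omega' f_i\rangle=-1$, $\langle\theta_{e_k},\Omega' e_\ell\rangle=-1$ for $e_\ell^+=f_k^-$ and their $e\leftrightarrow f$ analogues; the long-edge pairings $\langle\theta_{E_j},\Omega' E_j\rangle=\langle\theta_{E_j},\Omega' F_j\rangle=\langle\theta_{F_j},\Omega' F_j\rangle=\langle\theta_{F_j},\Omega' E_j\rangle=-1$; and the single nonzero thread-thread type $\langle\theta_{e_k},\Omega'\theta_{e_\ell}\rangle=1$ for $e_\ell^+=f_k^-$. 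The four summands of each generator are tuned so that, at each gate, the bulk short/long contributions cancel against the thread contributions; I would organize the check by fixing a generator and a gate and verifying the cancellation there, using that distinct tetrahedron blocks of $\Omega'$ do not interact (with a brief extra case for self-gluings). Conceptually the vanishing is forced: these generators are exactly the relations produced by the coend of Lemma \ref{lemma:gluing-for-int-skein-modules} along the $T$-coloured gluing circles, and that coend — a trace over $\Rep_qT$ — makes the glued $T$-monodromy classes central, which is precisely $\Omega' g = 0$.

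The step I expect to be the real obstacle is the second combinatorial input for unimodularity — that the long-edge gluing relations perfectly match the thread coordinates — since it requires carefully unwinding how threads are attached at the gates of a truncated-tetrahedron triangulation. Once that is in hand, both halves of the lemma are essentially formal (a block-triangular determinant, and a term-by-term cancellation), and I would isolate the matching statement as a small auxiliary lemma about the incidence combinatorics of $\triangle$.
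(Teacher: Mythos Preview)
Your proposal is essentially correct and is far more detailed than the paper's own argument, which consists of the single sentence ``Both claims can be confirmed by direct computation.'' So there is no meaningful divergence in approach to report: the paper asserts computability and you actually sketch the computation.

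A few small comments on your write-up. First, your basis-completion argument for unimodularity can be simplified so as to avoid the ``second combinatorial input'' entirely. Project onto the $12t$ short-edge coordinates: the $g_i^{sh}$ map to $e_i+f_i$, which are visibly primitive (complete with the $e_i$'s to a basis), while the $g_i^{lg}$ map to zero; so it remains to check that the $g_i^{lg}$ span a primitive sublattice of $\Lambda'/\langle g_i^{sh}\rangle$. Since the short-edge relations eliminate one thread from each pair $(\theta_{e_i},\theta_{f_i})$, you can now run the same trick a second time: the threads appearing in the $g_j^{lg}$ are, for each face gluing, precisely the six threads produced by that gluing, and each appears in exactly one $g_j^{lg}$. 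Restricting to a single face gluing (three $g_j^{lg}$'s and six threads) makes the primitivity local and easy. This sidesteps having to prove globally that $\{\theta_{E_i}\}_i$ are pairwise distinct, and is more robust to self-gluings.

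Second, your conceptual remark that the centrality is ``forced'' by the coend along $\Rep_qT$-circles is morally right but logically circular as written, since Theorem~\ref{thm:cluster-chart-for-sigma-tri} \emph{uses} this lemma. It's fine as motivation, but the proof should rest on the explicit pairing check you outline.
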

Both claims can be confirm by direct computation. 

We define a character $\chi : \Gamma_{gluing} \to \K$ by $\chi(e_i+\theta_{e_i} + f_i+\theta_{f_i}) = q^{-2}$ and $\chi(E_i + \theta_{E_i} - F_i - \theta_{F_i}) = 1$.
By Lemma \ref{lemma:gluing-lattice-quotient} and Lemma \ref{lemma:central-quotient}, we have $\WW_{\Lambda'} \otimes_{\WW_{\Gamma_{gluing}}}\K_\chi \cong \WW_{\Lambda'/\Gamma_{gluing}}.$
Let $\WW_\triangle := \WW_{\Lambda'/\Gamma_{gluing}}$.

\begin{theorem}\label{thm:cluster-chart-for-sigma-tri}
   The quantum torus $\WW_\triangle$ described immediately above is a quantum cluster chart for the internal decorated skein algebra of $\Sigma_\triangle$, i.e.
   \begin{equation}
       \SkAlg^{int}(\Sigma_\triangle)[S^{-1}_\triangle]\cong \mathbb{W}_\triangle.
   \end{equation}
   Where $S_\triangle \subset \SkAlg^{int}(\Sigma_\triangle)$ denotes the set of long edges determined by the triangulation.
\end{theorem}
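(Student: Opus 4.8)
The plan is to build the comparison homomorphism $\varphi\colon \WW_\triangle \to \SkAlg^{int}(\Sigma_\triangle)[S_\triangle^{-1}]$ by hand on generators, and then to prove it is an isomorphism by excision, reducing the whole computation to the four-punctured-sphere building block of Lemma~\ref{lemma:muller-applies} (together with the decorated-annulus block of Lemma~\ref{lemma:the-decorated-annulus} near the $G$-region handles). Since, by Lemmas~\ref{lemma:gluing-lattice-quotient} and~\ref{lemma:central-quotient}, $\WW_\triangle = \WW_{\Lambda'}\otimes_{\WW_{\Gamma_{gluing}}}\K_\chi$, it is enough to produce a homomorphism $\WW_{\Lambda'}\to \SkAlg^{int}(\Sigma_\triangle)[S_\triangle^{-1}]$ that kills $X^{v}-\chi(v)$ for all $v\in\Gamma_{gluing}$.

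\textbf{Step 1: defining $\varphi$ and checking it respects $\Omega'$.} I would send each short-edge (resp.\ long-edge) generator of the $i$-th copy of $\Lambda_{tet}$ to the corresponding short-edge (resp.\ long-edge) skein sitting in $\Sigma_\triangle$, and each thread generator $\theta_{e_i},\theta_{f_i}$ to the skein running through the glued $T$-region annulus connecting the two incident punctures (Figure~\ref{fig:threads}). Well-definedness is exactly the $q$-commutation relations of $\Omega'$: the intra-block relations ($\Omega_{tet}$) hold by Lemma~\ref{lemma:muller-applies}, while the thread blocks $\Omega_{tet,th}$ and $\Omega_{th,th}$ are, by construction, nothing but the $T$-region commutation relations of Figure~\ref{fig:T-region-commutation-relations} applied to a thread and the short/long edges it meets at a shared gate.

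\textbf{Step 2: excision and identification of the coend relations.} Present $\Sigma_\triangle$ as $\bigsqcup_{i=1}^t \TetSurf_i$ with faces identified in pairs according to $\varphi_\triangle$; each face-identification glues three boundary arcs of the annular $T$-regions of $\TetSurf_i$ to three of $\TetSurf_j$, and the extra two gates per puncture built into $\TetSurf$ in Section~\ref{sec:ideal-tet-boundary} supply the gluing data. Applying the internal-skein gluing formula (Lemma~\ref{lemma:gluing-for-int-skein-modules}, hence Lemma~\ref{lemma:skfun-glues}) to the $2t$ face-identifications, then inverting $S_\triangle$ and passing to the open locus where every tetrahedron's monadic comparison functor is conservative and colimit-preserving (Lemma~\ref{lemma:muller-applies}), presents $\SkAlg^{int}(\Sigma_\triangle)[S_\triangle^{-1}]$ as the algebra generated by the $t$ blocks $\WW_{tet}^{(i)}$ together with the threads, modulo the coend relations \eqref{eq:coend-colimit}. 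I would then unwind the coend relation for a single face-gluing: absorbing a coupon on either side of the glued gate identifies the glued short edge of $\TetSurf_i$ with the partner short edge of $\TetSurf_j$ conjugated by the two incident threads, up to a scalar, which is precisely the relation $\varphi\!\left(X^{e_i+\theta_{e_i}+f_i+\theta_{f_i}}\right)=q^{-2}$ --- the $q^{-2}$ being the quantum-dimension/framing factor carried by the pair of thread caps, of the same provenance as the $-q^{3/2}$ correction appearing in the proof of Lemma~\ref{lemma:ideal-tet-presentation}. The analogous computation on long edges yields $\varphi\!\left(X^{E_i+\theta_{E_i}-F_i-\theta_{F_i}}\right)=1$. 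These are exactly the relations defining $\WW_\triangle$, so $\varphi$ descends; the excision presentation shows there are no further relations, so $\varphi$ is surjective, and since both sides are quantum tori of rank $\operatorname{rk}\Lambda'-\operatorname{rk}\Gamma_{gluing}$, surjectivity forces injectivity.

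\textbf{Main obstacle.} I expect the hard part to be Step~2: tracking the coend relations of Lemma~\ref{lemma:gluing-for-int-skein-modules} through each face-gluing and verifying that they match, on the nose and with the correct power of $q$, the combinatorially defined data $(\Gamma_{gluing},\chi)$ --- in particular, that the $(\Rep_qG,\Rep_qT)$-central structure at the $B$-defect contributes nothing at the gluing annuli beyond the threads and the scalar $q^{-2}$, and that inverting only the $t$ long edges $S_\triangle$ already lands every tetrahedron simultaneously in the good locus of Lemma~\ref{lemma:muller-applies}. Everything else is bookkeeping that $\Omega'$ was reverse-engineered to absorb.
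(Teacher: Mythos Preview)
Your Step~1 is the paper's opening move as well. For Step~2 the paper takes a much shorter route than your excision argument: rather than assembling $\Sigma_\triangle$ from four-punctured spheres and tracking the coend relations, it argues directly on $\Sigma_\triangle$.

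For surjectivity, the only skeins not obviously hit by $\varphi$ are the $G$-monodromies around the long-edge handles; a short computation (cf.\ Figure~\ref{fig:coordinates-for-sigma-tri}, right) shows that after inverting the long edges each of these is a two-term expression in threads, so $\varphi$ is onto.

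For the kernel, the inclusion $I_{gluing}\subseteq\ker\varphi$ is immediate from the skein relations on $\Sigma_\triangle$. For the reverse inclusion the paper performs an \emph{independent} rank count of the target: $\Sigma_\triangle$ is a genus $t{+}1$ surface with $4t$ punctures; with a single gate its localised internal skein algebra is a rank $6t{+}1$ quantum torus, opening the remaining $4t{-}1$ punctures contributes $4t{-}1$ further generators, and the two extra gates per puncture contribute $8t$ more, giving total rank $(6t{+}1)+(4t{-}1)+8t=18t$. Since $\WW_\triangle$ also has rank $18t$, the surjection is an isomorphism.

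This completely sidesteps your ``main obstacle'': no coend is unwound, no face-by-face gluing is analysed, and the scalar $q^{-2}$ in $\chi$ is never \emph{derived} --- it is simply checked to lie in $\ker\varphi$ by a direct skein computation. Your excision route is plausible and would yield a more constructive presentation of the relations, but it front-loads precisely the bookkeeping the rank count is designed to avoid. Note also that your decomposition needs care: the face-identifications in $\Sigma_\triangle$ are not disk-gluings of copies of $\DD_3$ but cylinder-gluings (the faces lie in the interior of $\Mbulk$, and $\Sigma_\triangle$ is built from punctured spheres joined by $T$-region tubes carrying the threads; see Figure~\ref{fig:the-big-surface}, right), so the coend you would actually compute is organised somewhat differently from what you sketch.
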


\begin{proof}
Let $a_{kpe}, A_{kp_1p_2},\theta^{v_1}_{v_2}$ be the short edges, long edges, and threads, ordered lexicographically with respect to their indices, where $k=1,\ldots,t$ tracks the tetrahedra, $p, e,p_1,p_2 \in \{0,1,2,3\}$ correspond to various punctures, and $v_1,v_2$ run over the vertices.
The commutation relations in $\WW_{\Lambda'}$ were constructed exactly so that the map $\varphi: \WW_{\Lambda'} \to \SkAlg^{int}(\Sigma_\triangle)[S_\triangle^{-1}]$ which sends 
\begin{equation}
\begin{aligned}
        X^{e_i} \mapsto \text{$i$th short edge, }&& X^{E_i} \mapsto \text{$i$th long edge},&& X^{\theta_i} \mapsto \text{$i$th thread}
\end{aligned}
\end{equation}
is a linear homomorphism.
The only potential obstruction to surjectivity is the $G$-monodromy around the long edges.
A direct computation shows that after localising the long edges, this is equal to a two-term expression in the threads and therefore in the image.

Next we claim that the kernel of this map is generated by $X^v - \chi(v)$, for $v \in \Gamma_{gluing}$ and $\chi : \Gamma_{gluing} \to \K$ the character defined immediately before this theorem.
Inclusion $I_{gluing} := \langle X^{v} - \chi(v) \mid v \in \Gamma_{gluing}\rangle \subseteq \ker \varphi$ is immediate.
By linear independence of the gluing relations, $\WW_{\triangle} \simeq \WW_{\Lambda'} / I_{gluing}$ has rank $18t$.
On the other hand, $\Sigma_\triangle$ is a genus $t+1$ surface with $4t$ punctures.
If we choose a distinguished puncture to give a single gate, the corresponding localised decorated internal skein algebra will be a rank $6t+1$ quantum torus, see Figure \ref{fig:coordinates-for-sigma-tri}.
Opening a gate on each of the other $4t-1$ punctures introduces $4t-1$ new independent skeins, while opening an additional two gates per puncture introduces $8t$ final new skeins.
In total, we find that $\SkAlg^{int}_{\Rep_qT^{\boxtimes 12t}}(\Sigma_\triangle)[S^{-1}]$ is a rank $18t = (6t+1) + (4t-1) + 8t$ quantum torus.

\begin{figure}
    \centering
    \includegraphics{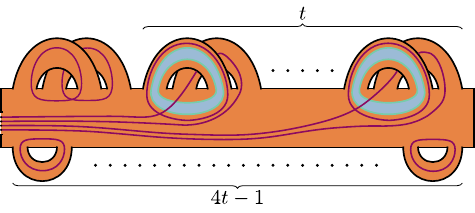}\hspace{7mm}\raisebox{.6\height}{\includegraphics[]{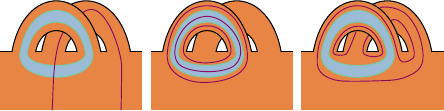}}
    \caption{\textbf{Left:} Generators of the quantum torus $\SkAlg^{int}_{\Rep_qT}(\Sigma_\triangle)[S^{-1}]$, where $S$ is the set of edges passing through the $G$-regions. There are $2t+2$ generators corresponding to meridians and longitudes, $t$ of which pass through $G$-regions. There are $4t-1$ generators corresponding to punctures (the final puncture's monodromy can be expressed in terms of the $T$-region meridian and longitude.)\\
    \textbf{Right:} Skeins around a 2-handle with $G$-annulus. Note that the $G$ monodromy (middle) and the more complicated $T$-monodromy (right) can be expressed in terms of either of the $T$-monodromies (middle) after localising the skein passing through the $G$-region (left).}
    \label{fig:coordinates-for-sigma-tri}
\end{figure}
We conclude that $\varphi$ induces the desired isomorphism $\SkAlg^{int}(\Sigma_\triangle)[S^{-1}_\triangle]\cong \mathbb{W}_\triangle.$
\end{proof}

We finish this section by describing $\SkMod^{int}(\knotcomp^{bulk})[S^{-1}_\triangle]$ as a quotient of $\SkAlg^{int}(\Sigma_{\triangle})[S^{-1}_\triangle]$.

\begin{lemma}
Let $I_{bulk} \subset \SkAlg^{int}(\Sigma_\triangle)$ be the left ideal generated by the bulk relations:
\begin{equation}
    q^{5/2} (A_{i03} a_{i32} a_{i01}) (A_{i12} a_{i23} a_{i10}) + q^{3/2} (A_{i01} a_{i03}^{-1} a_{i12}^{-1}) (A_{i23} a_{i21}^{-1} a_{i30}^{-1}) + q^{3/2}A_{i13}A_{i02},\qquad i = 1,\ldots,t.
\end{equation}
Where the labelling convention for short and long edge indices is as in Figure \ref{fig:gen-ideal-tet}, with the tetrahedra index pretended. 
Then
\begin{equation}
    \SkMod^{int}(\Mbulk)[S_\triangle^{-1}] \cong \SkAlg^{int}(\Sigma_{\triangle})[S_\triangle^{-1}]/I_{bulk}
\end{equation}
\end{lemma}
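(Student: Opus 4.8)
The plan is to realize this as the $3$-manifold counterpart of Theorem~\ref{thm:cluster-chart-for-sigma-tri}: the bulk relations are exactly the $t$ crossing relations of Lemma~\ref{lemma:ideal-tet-presentation}, one per ideal tetrahedron, transported along the gluing that assembles $\Mbulk$ out of truncated tetrahedra, and the compatibility of transporting with imposing those relations is precisely right-exactness of the gluing coend.

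First I would fix the decomposition. Up to absorbing the collar $\Sigma_\triangle\times I$, the manifold $\Mbulk$ is obtained from the disjoint union $M_{tet}^{\sqcup t}$ by gluing the $2t$ pairs of triangular faces dictated by $\varphi_\triangle$; inducing these identifications on boundaries, $\Sigma_\triangle=\partial\Mbulk$ is obtained from $(\partial M_{tet})^{\sqcup t}$ by gluing the corresponding short-edge arcs by the \emph{same} pattern. Since $\SkMod^{int}$ is multiplicative under disjoint unions, iterating the internal-skein gluing formula (Lemma~\ref{lemma:gluing-for-int-skein-modules}) once per face yields, compatibly with the $\SkAlg^{int}(\Sigma_\triangle)$-module structures, equivalences
\[
\SkMod^{int}(\Mbulk)\ \cong\ \int^{\vec V\in\G_{glue}}\!\!\bigotimes_{i=1}^{t}\SkMod^{int}(M_{tet,i}),\qquad
\SkAlg^{int}(\Sigma_\triangle)\ \cong\ \int^{\vec V\in\G_{glue}}\!\!\bigotimes_{i=1}^{t}\SkAlg^{int}(\partial M_{tet,i}),
\]
where $\G_{glue}$ is the gate category attached to the glued faces and each functor is evaluated at the coend variables $\vec V$, inserted as $V\boxtimes V^{*}$ at each pair of glued gates; the second equivalence applies the same lemma to $\Sigma_\triangle\times I$. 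One also records that $\SkMod^{int}(M_{tet,i})$ is cyclic on the empty skein — any skein in the ball $M_{tet}$ reduces, via the skein relations, to a $\C_q$-combination of skeins in a collar of its boundary, a fact already used in the proof of Lemma~\ref{lemma:ideal-tet-presentation} — and hence so is $\SkMod^{int}(\Mbulk)$, so both sides of the asserted isomorphism are cyclic $\SkAlg^{int}(\Sigma_\triangle)$-modules.

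Next I would localize and feed in the local presentations. Inverting the long-edge set $S_\triangle$ is compatible with both coends above, since on the glued surface $S_\triangle$ pulls back to products of the per-tetrahedron sets $\triangle_{tet,i}$; after localizing we may therefore compute with $\SkAlg^{int}(\partial M_{tet,i})[\triangle_{tet,i}^{-1}]\cong\WW_{tet,i}$ (Lemma~\ref{lemma:muller-applies}) and, combining cyclicity with Lemma~\ref{lemma:ideal-tet-presentation}, with $\SkMod^{int}(M_{tet,i})[\triangle_{tet,i}^{-1}]\cong\WW_{tet,i}/(R_i)$, where $R_i\in\WW_{tet,i}$ is the crossing relation \eqref{eq:crossing-relation} of the $i$-th tetrahedron and $(R_i)$ the left ideal it generates. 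Coends are colimits and $-\otimes-$ over $\C_q$ is right exact, so the operation $\int^{\G_{glue}}\bigotimes_i(-)$ is right exact; applying it to the surjections $\WW_{tet,i}\twoheadrightarrow\WW_{tet,i}/(R_i)$ then exhibits $\SkMod^{int}(\Mbulk)[S_\triangle^{-1}]$ as the quotient of $\bigl(\int^{\G_{glue}}\bigotimes_i\WW_{tet,i}\bigr)\cong\SkAlg^{int}(\Sigma_\triangle)[S_\triangle^{-1}]\cong\WW_\triangle$ (Theorem~\ref{thm:cluster-chart-for-sigma-tri}) by the left ideal generated by the images of $R_1,\dots,R_t$. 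Under the cluster chart of Theorem~\ref{thm:cluster-chart-for-sigma-tri} those images are precisely the $t$ bulk relations appearing in the statement, whose left ideal is by definition $I_{bulk}$; this gives $\SkMod^{int}(\Mbulk)[S_\triangle^{-1}]\cong\WW_\triangle/I_{bulk}=\SkAlg^{int}(\Sigma_\triangle)[S_\triangle^{-1}]/I_{bulk}$.

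The step I expect to cost the most is the identification $\int^{\G_{glue}}\bigotimes_i\WW_{tet,i}\cong\WW_\triangle$ together with the matching ``image of $R_i$ equals the $i$-th bulk relation'': this amounts to rerunning Theorem~\ref{thm:cluster-chart-for-sigma-tri} at the level of skein modules rather than algebras, so one must check that the coend relations — sliding a coupon across a glued face — are implemented exactly by the thread generators $\Lambda_{th}$, that closing up each identified short-edge cycle contributes the scalar $q^{-2}$ recorded by the gluing character $\chi$ (the same twist/quantum-dimension bookkeeping as in the quantum Hamiltonian reduction of Section~\ref{sec:internal-constructions}), and that all of this respects the residual $T$-actions. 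The remaining ingredients — multiplicativity of $\SkMod^{int}$ under disjoint union, cyclicity of the ball's internal skein module, and right-exactness of coends — are routine.
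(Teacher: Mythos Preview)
Your proposal is correct and follows essentially the same approach as the paper: build $\Mbulk$ from $t$ copies of $M_{tet}$, invoke the per-tetrahedron presentation of Lemma~\ref{lemma:ideal-tet-presentation}, and then use that gluing (a coend), localisation, and taking quotients are all colimits and hence commute. Your write-up is considerably more explicit than the paper's own proof, which compresses the entire argument into the single sentence ``quotients, localisation, and gluing are all colimits, these operations do in fact commute.''
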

\begin{proof}
By construction $\Mbulk$ can be built by gluing together copies of the solid ideal tetrahedron $M_{tet}$.
The boundary $\Sigma_{\triangle}$ of $\Mbulk$ can be built out of $t$ copies of $\TetSurf\setminus \DD_3^{\sqcup 4}$.
Since $\SkAlg^{int}(\TetSurf)[S^{-1}_{tet}] \simeq \SkAlg^{int}(\TetSurf\setminus \DD_3^{\sqcup 4})[S^{-1}_{tet}] \simeq \WW_{tet}$, also acts on $M_{tet}$, the claim is that gluing is compatible with the quotient that produces the internal skein module and the localisation needed to get a quantum cluster chart.
Since quotients, localisation, and gluing are all colimits, these operations do in fact commute.
\end{proof}

\subsection[Computing the localised quantum A-ideal]{Computing the localised quantum $A$-ideal}\label{sec:localised-quantum-A-ideal}

This section concerns constructing a $\SkAlg_T(\TetSurf)$-module $\Sk(\knotcomp)^{loc} \simeq \Winv$ from the $\SkAlg^{int}(\Sigma_\triangle)$-module $\Sk^{int}(\Mbulk)\cong \SkAlg^{int}(\Sigma_\triangle)[S^{-1}_\triangle]/I_{bulk}$.

The bipartite surface $\Sigma_\triangle$ has $4t$ punctures corresponding to the vertices of the ideal triangulation, each equipped with 3 gates. These must be closed to pass from $\SkAlg^{int}(\Sigma_\triangle)$ to $\SkAlg(\overline{\Sigma}_\triangle)$. 

Recall that the term ``internal'' in the name ``internal skein algebra'' refers to the fact that $\SkAlg^{int}(\Sigma_\triangle)$ is an algebra object in $\Rep_q T^{\otimes 12t}$.
Each gate has an associated $T$ action, and by `closing a gate' we mean restricting to the subalgebra of elements which are invariant under this corresponding $T$-action.
Note that the action is compatible with localization, so that the quantum cluster chart $\WW_\triangle$ is likewise an element in $\Rep_qT^{\otimes 12t}.$
Let $\Winv \subset \WW_\triangle\cong\SkAlg^{int}(\Sigma_\triangle)[S_\triangle^{-1}]$ be the subalgebra invariant under the $T^{\otimes 12t}$-action.

Recall that $\Sigma_\triangle$ has a $G$-region handle (shown in Figure \ref{fig:the-big-surface}) corresponding to each long edge of the ideal triangulation $\triangle$.
These handles are what allowed us to localise the long edges on our way to a quantum cluster chart, but must be removed in order to obtain the $\SkAlg_{\Rep_qT}(T^2)$-module defining the quantum A-polynomial. 
Their removal consists of a series of 1-handle attachments, which translate to specializing certain monodromies defined using threads.

We start by giving a presentation of the invariants of the quantum cluster chart $\WW_\triangle$ from Theorem \ref{thm:cluster-chart-for-sigma-tri}.

\begin{proposition}\label{prop:invariant-subtorus}
The quantum cluster chart $\WW_\triangle \simeq \SkAlg^{int}(\Sigma_\triangle)[S^{-1}_\triangle]$ restricts to an isomorphism of the $T$-invariant subalgebra $\Winv \simeq \left(\SkAlg^{int}(\Sigma_\triangle)[S^{-1}_\triangle]\right)^{inv}$, which is likewise a quantum torus generated by
\begin{itemize}
    \item \textbf{Longitude $L$ and meridian $M$} - these are determined by the knot $K$ and satisfy $LM = q ML$.
    \item \textbf{Puncture monodromies $\alpha_1,\ldots, \alpha_{4t}$} - these central elements encircle the $4t$ boundary components and generate a unimodular sublattice.
    \item \textbf{thread monodromies $r_1,r_2,\ldots,r_{2t}$} - these correspond to curves in the $T$-region running parallel to the annular $G$ regions and commute with the longitude, the meridian, and each other.
    \item \textbf{Skeletal variables $Y_1,\ldots,Y_{t-1}$} - these are ratios of non-equivalent long edges, scaled by threads/short edges to have weight zero.
\end{itemize}
\end{proposition}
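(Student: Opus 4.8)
The plan is to compute the invariant subalgebra directly from the quantum cluster chart $\WW_\triangle = \WW_{\Lambda'/\Gamma_{gluing}}$ furnished by Theorem \ref{thm:cluster-chart-for-sigma-tri}, using the description of the $T^{\otimes 12t}$-action in terms of the weight map $s : \Lambda'/\Gamma_{gluing} \to \ZZ^{12t}$. The invariant subalgebra is the quantum sub-torus on the kernel sublattice $\Lambda^{inv} := \ker\bigl(s : \Lambda'/\Gamma_{gluing} \to \ZZ^{12t}\bigr)$; this is immediate from the fact that a monomial $X^v$ in a quantum torus is $T^{\otimes 12t}$-invariant if and only if $s(v) = 0$, and that such monomials span a sub-quantum-torus. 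So the real content is (a) identifying an explicit basis of $\Lambda^{inv}$ by exhibiting the $2 + 4t + 2t + (t-1)$ generators $L, M, \alpha_i, r_j, Y_k$ as weight-zero lattice vectors, (b) checking these are linearly independent and span, i.e. that $\operatorname{rank}\Lambda^{inv} = 7t + 1$, and (c) reading off the claimed commutation relations from $\Omega'$ descended to $\Lambda'/\Gamma_{gluing}$.

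First I would fix explicit lattice representatives. The longitude $L$ and meridian $M$ are the two generators of the rank-$(6t+1)$ "handle-and-comb" torus $\SkAlg^{int}_{\Rep_qT}(\Sigma_\triangle)[S^{-1}]$ indicated in Figure \ref{fig:coordinates-for-sigma-tri} that lie entirely in the $T$-region and do not terminate at any gate; these are manifestly weight-zero, and $LM = qML$ is the $T^2$ commutation relation already recorded (it comes from the $T$-region crossing relation of Figure \ref{fig:kauffman-crossings}). The puncture monodromies $\alpha_i$ are the skeins parallel to the $4t$ boundary components; each is weight zero because it bounds a $T$-region annulus neighbourhood of a puncture, they are central by the same computation that makes boundary-parallel skeins central in Lemma \ref{lemma:muller-applies}, and unimodularity of the sublattice they span follows because each $\alpha_i$ is (up to short edges) the product of the three short edges around the $i$-th puncture — a standard "triangle of short edges" relation — so they form part of a basis. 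The thread monodromies $r_j$ are the curves running parallel to the $t$ annular $G$-regions, two per handle; concretely $r_j$ is built from a pair of threads $\theta_{E}, \theta_{F}$ closing up around a handle, and the defining relation $E + \theta_E - F - \theta_F$ being weight zero (used already in constructing $\Lambda_{th}$) is exactly what makes $r_j$ weight zero. The skeletal variables $Y_k$ are the edge-ratios $A_{E}A_{F}^{-1}$ (suitably corrected by short edges and threads to weight zero) for the $t-1$ non-redundant pairs of identified long edges — "non-redundant" because the $t$ gluing relations $E_i + \theta_{E_i} - F_i - \theta_{F_i}$ already live in $\Gamma_{gluing}$, cutting the $t$ naive ratios down to $t-1$ independent ones.

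Then I would do the rank count, which is the cleanest way to prove spanning. By Theorem \ref{thm:cluster-chart-for-sigma-tri} (and the rank tally in its proof), $\WW_\triangle$ has rank $18t$ and carries a $\ZZ^{12t}$-grading via $s$. I claim $s$ is surjective, so $\operatorname{rank}\Lambda^{inv} = 18t - 12t = 6t$ — wait, that undershoots $7t+1$; the resolution is that the $T$-action is \emph{not} faithful: the $12t$ gate-weights are constrained (three gates on each of $4t$ punctures, but skeins around a puncture see only the total weight, and the $4t$ puncture-total weights are themselves not all independent once we quotient by $\Gamma_{gluing}$). So the correct first step is to compute the actual rank of the image of $s$ on $\Lambda'/\Gamma_{gluing}$: I expect it to be $11t-1$, giving $\operatorname{rank}\Lambda^{inv} = 18t - (11t-1) = 7t+1$, matching $2 + 4t + 2t + (t-1)$. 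Pinning down the image of $s$ exactly — equivalently, finding all the weight constraints imposed by the short-edge "triangle" relations and the gluing sublattice — is the main obstacle, and is where a careful bookkeeping argument (or the direct matrix computation the authors defer to Figure \ref{fig:coordinates-for-sigma-tri} and the code) is needed; once the rank is known, linear independence of the $7t+1$ displayed generators is a finite check and "spanning" follows by dimension count. Finally, the commutation relations ($LM = qML$, the $Y_k$ being mutually related and commuting with $L, M$ by the thread/long-edge entries of $\Omega'$, the $\alpha_i$ central, the $r_j$ commuting with $L, M$ and each other) are obtained by restricting the bilinear form $\langle -, \Omega' -\rangle$ to $\Lambda^{inv}$ and plugging in the explicit representatives — routine given the block structure of $\Omega'$ and the thread-pairing conditions already imposed in Section \ref{sec:quantum-cluster-charts}.
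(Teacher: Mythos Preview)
Your overall strategy---identify $\Lambda^{inv}$ as the kernel of the weight map $s$ and then produce an explicit basis---is sound in principle, but the execution contains a genuine error that derails the argument. You correctly compute that if $s$ has full rank $12t$ then $\operatorname{rank}\Lambda^{inv} = 18t - 12t = 6t$, and then you reject this because it does not match the $7t+1$ generators listed in the proposition. But the proposition only claims these elements \emph{generate} $\Winv$; the paper remarks immediately afterward that ``we've given more generators than necessary,'' pointing out that the thread monodromies come in equivalent pairs (Lemma~\ref{lemma:thread-monodromies}). Your target of $7t+1$ is therefore wrong, and your guess that the image of $s$ has rank $11t-1$ is unmotivated. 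In fact your first computation is the right one: the short edges and threads produce all difference vectors $\delta_{g_1}-\delta_{g_2}$ (the gate graph being connected), and any single long edge contributes a vector of total weight $+2$, so the image of $s$ is exactly the even-sum sublattice of $\ZZ^{12t}$, which has full rank $12t$. A rank-count proof would then need to find the $t+1$ relations among the listed generators, not to inflate the answer to $7t+1$.

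Your account of why there are $t-1$ skeletal variables is also muddled: there are $6t$ long-edge gluing relations in $\Gamma_{gluing}$, not $t$, and they are not what cuts the ratios down. The correct reason, which the paper gives, is that the triangulation has $t$ distinct edges (an Euler characteristic count), hence $t$ long-edge skeins $A_1,\ldots,A_t$ in the one-gate picture, each of weight $+2$; the only weight-zero monomials in the $A_i$ are those of total degree zero, and these are generated by the $t-1$ ratios $Y_j = A_t A_j^{-1}$.

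The paper's own proof sidesteps the rank count entirely. It argues geometrically: excising the $G$-region annuli leaves a $T$-region torus with $6t$ boundary circles, whose invariant skein algebra is already known to be generated by $L$, $M$, the $4t$ puncture monodromies, and the $2t$ thread monodromies. Restoring the $G$-regions, one works in the one-gate model of Figure~\ref{fig:coordinates-for-sigma-tri} and observes that the long-edge skeins $A_i$ are the only generators with nonzero weight at that gate (weight $+2$), so the additional invariant elements are precisely the degree-zero monomials in the $A_i$, i.e.\ the $Y_j$. This establishes generation directly without ever needing to determine which relations hold among the generators.
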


We've given more generators than necessary in Proposition \ref{prop:invariant-subtorus}. The thread monodromies come in equivalent pairs. When $t=2$ and there's an additional relation which allows us to write one of the thread monodromies as a product of the others together with the longitude.

We establish a few relevant lemmas before proving Proposition \ref{prop:invariant-subtorus}.

\begin{lemma}\label{lemma:thread-monodromies}
    Let $r_1,\ldots,r_{2t} \in \SkAlg^{int}(\Sigma_\triangle)$ be the \textbf{thread monodromies}, i.e. skeins represented by the essential curves in the $T$-region which run parallel to the $B$-defects. Then
    \begin{enumerate}
        \item Each $r_i$ is represented by a unique minimal-degree monomial in the threads.
        \item Thread monodromies come in parallel pairs $r_{2j-1},r_{2j}$ of equal degree, which are equivalent upon localising any of the long edge skeins that they cross.
    \end{enumerate}
    \end{lemma}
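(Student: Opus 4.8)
The plan is to deduce both assertions from the quantum cluster chart $\varphi\colon\WW_{\Lambda'}\twoheadrightarrow \WW_\triangle\cong\SkAlg^{int}(\Sigma_\triangle)[S^{-1}_\triangle]$ of Theorem~\ref{thm:cluster-chart-for-sigma-tri}, together with one isotopy argument carried out inside a $G$-annulus.

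For part (1): each thread monodromy $r_i$ is the skein class of an essential simple closed curve $\gamma_i$ in the $T$-region running parallel to one of the $B$-defect circles bounding a $G$-annulus (there are $2t$ such circles, hence $2t$ curves $\gamma_i$). First I would isotope $\gamma_i$, within the $T$-region, into the union of the ``gluing necks'' where short edges of adjacent tetrahedra are identified (Figures~\ref{fig:threads} and~\ref{fig:gluing-relations}), arranged to meet the long edges transversely; then $\gamma_i$ is a cyclic concatenation of arcs, each isotopic to a single thread generator $\theta_{e_\bullet}$ or $\theta_{f_\bullet}$. Applying $\varphi$ exhibits $r_i$ as a monomial in the threads, and since $\gamma_i$ is simple it passes through each gate at most once, so every thread occurs with exponent in $\{0,\pm1\}$ --- this is the claimed minimal-degree representative. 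Uniqueness then reduces to $\Gamma_{gluing}\cap\Lambda_{th}=\{0\}$: if two thread-monomials $X^{w},X^{w'}$ agree in $\WW_\triangle$ then $w-w'\in\Gamma_{gluing}$, but each generator of $\Gamma_{gluing}$ in Lemma~\ref{lemma:gluing-lattice-quotient} has nonzero, and mutually independent, image ($e_i+f_i$, resp.\ $E_i-F_i$) in the short/long sublattice, so $w=w'$.

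For part (2): the two $B$-defect circles of a $G$-annulus $S^1\times[0,1]$ are freely homotopic across the annulus, so the parallel $T$-curves $\gamma_{2j-1},\gamma_{2j}$ traverse the same number of gluing necks; this already gives the equality of thread-degrees. To compare the skeins themselves I would use a long edge $E$ threading this annulus, crossing each of $\gamma_{2j-1},\gamma_{2j}$ exactly once, and slide the $T$-loop $\gamma_{2j-1}$ along $E$ through the $G$-region to the position of $\gamma_{2j}$. The only relations invoked along the way are the defect ``dip'' relations of Figure~\ref{fig:kauffman-dips} and the $\SL_2$ Kauffman relations in the $G$-region along $E$ (Figure~\ref{fig:kauffman-crossings}); because $E$ is coloured by a simple object and is invertible in $\WW_\triangle$, these collapse to a single scalar together with a correction factor that is a monomial in the (already invertible) short and long edges, producing a relation $r_{2j-1}=c\,r_{2j}$ with $c$ a unit of $\WW_\triangle$. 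Hence $r_{2j-1}$ and $r_{2j}$ become equivalent (proportional up to a long-edge monomial) after localising any long edge they cross.

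I expect the main obstacle to be making the sliding move in part (2) rigorous: one must verify that carrying the $T$-coloured loop along the localised long edge $E$ across the $G$-annulus, and resolving the crossings this creates with the $G$-region Kauffman relations, yields only an invertible monomial correction rather than a genuine sum of distinct skeins. This is precisely the phenomenon visible in the unknot computation (where $L_G A=(L+L^{-1})A$), and it relies essentially on $E$ being localised and simply-coloured, so that the two-term Kauffman resolution of the loop around $E$ is forced onto a single eigenvalue. By contrast, the bookkeeping behind part (1) --- that $\gamma_i$ admits a thread decomposition with all exponents in $\{0,\pm1\}$ --- is routine once the gluing-neck picture of $\Sigma_\triangle$ is set up, and the linear-algebra claim $\Gamma_{gluing}\cap\Lambda_{th}=\{0\}$ follows directly from the explicit generators listed in Lemma~\ref{lemma:gluing-lattice-quotient}.
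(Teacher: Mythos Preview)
Your argument for part~(1) has a genuine gap: the claim that $\Gamma_{gluing}\cap\Lambda_{th}=\{0\}$ is false.  The short-edge generators do project to linearly independent elements $e_i+f_i$, but the long-edge generators do not: each long edge of $\Lambda_{tet}^{\times t}$ appears in exactly two pairs of $P_{lg}$, once as an $E_\bullet$ and once as an $F_\bullet$.  If $F_1=E_2,\,F_2=E_3,\ldots,F_m=E_1$ is a cycle of such identifications, then
\[
\sum_{k=1}^m\bigl(E_k+\theta_{E_k}-F_k-\theta_{F_k}\bigr)\;=\;\sum_k\theta_{E_k}-\sum_k\theta_{F_k}\;\in\;\Gamma_{gluing}\cap\Lambda_{th}\setminus\{0\}.
\]
This is not a minor technicality: this nonzero intersection is precisely the thread-lattice difference between the pair $r_{2j-1}$ and $r_{2j}$, so your proposed uniqueness criterion in $\WW_\triangle$ would, if it held, \emph{contradict} part~(2).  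The paper's uniqueness argument is instead local at each gate: exactly two threads are incident to any gate, one of weight $+1$ and one of weight $-1$, so the ``thread graph'' is $2$-regular and each thread monodromy is forced to be one of its cycles, with no alternative minimal expression.

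For part~(2), the paper avoids entering the $G$-region altogether.  The equivalence of $r_{2j-1}$ and $r_{2j}$ after localising a long edge is exactly the relation displayed above: summing the long-edge gluing relations around the cycle kills the long-edge contributions and leaves $\sum_k\theta_{E_k}\equiv\sum_k\theta_{F_k}$ in $\Lambda'/\Gamma_{gluing}$.  Geometrically this is the content of Figure~\ref{fig:pairs-of-thread-monodromies}: multiplying by a long edge and applying only the $T$-region crossing relations of Figure~\ref{fig:kauffman-crossings} passes the loop from one side of the $G$-handle to the other.  Your proposed slide through the $G$-region via Kauffman and dip relations runs into exactly the obstacle you flag --- pushing a $T$-coloured loop across the $B$-defect produces a genuine two-term sum rather than a unit correction --- and is unnecessary once you see the $\Gamma_{gluing}$ mechanism.
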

\begin{figure}
    \centering
    \includegraphics[width=.8\linewidth]{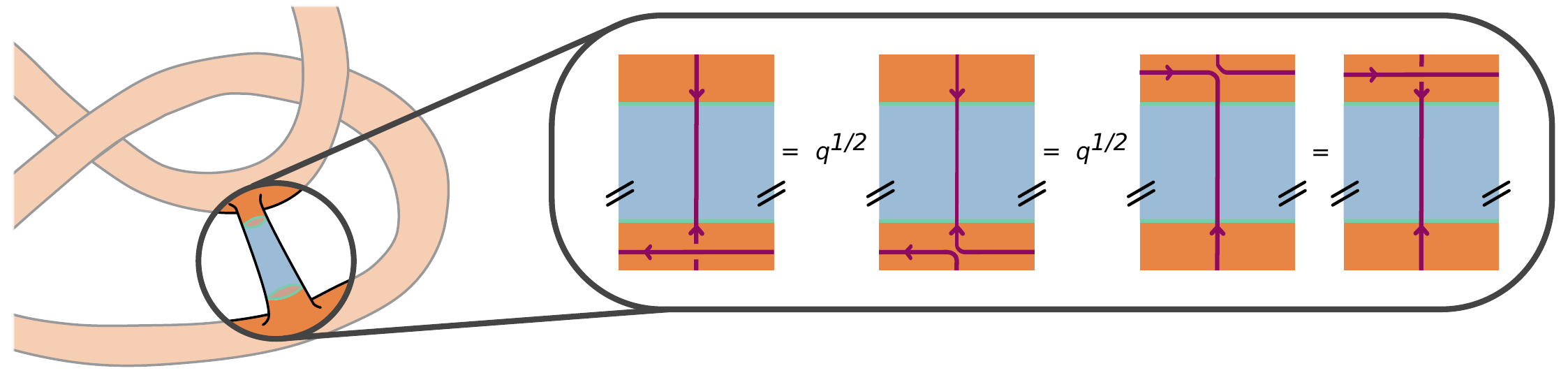}
    \caption{Each $G$-region handle has an associated pair of thread monodromies crossed by the long edge skeins passing through that region. Multiplying by the long edge skein allows us to pass the monodromy from one side of the $G$-region to the other.}
    \label{fig:pairs-of-thread-monodromies}
\end{figure}
\begin{proof}
Recall the pairs of identified long edges $P_{lg} = \{(E_1,F_1),\ldots,(E_{6t},F_{6t})\}$.
Each long edge appears in two faces and is therefore listed twice in $P_{lg}.$
Suppose without loss of generality that $F_{1} = E_{2}, F_{2} = E_{3},\ldots, F_{m}=E_{1}$ is a cycle of identified long edges, then $\theta_{E_{1}}+\cdots+\theta_{E_{m}}$ and $\theta_{F_{1}}+\cdots+\theta_{F_{m}}$ will both have weight zero. Hence
$V^{\theta_{E_{1}}+\cdots + \theta_{E_{m}}}$ and $ V^{\theta_{F_{1}}+\cdots + \theta_{F_{m}}}$
are a pair of equal degree monomials in the threads represented by thread monodromies. They are by construction parallel to the two $B$-defects enclosing a single annular $G$-region, see Figure \ref{fig:pairs-of-thread-monodromies}.

These are minimal and unique because each gate has exactly one long edge, which has weight +1, and exactly two adjacent threads\footnote{If the long edge is not identified with any others in the triangulation, then this will be the two half-edges of a single thread}, one with weight +1 and the other with weight -1.

Next, the equivalence between pairs of skeins can be computed directly using the $T$-relations of Figure \ref{fig:kauffman-crossings}.
We've done this computation in Figure \ref{fig:pairs-of-thread-monodromies}.

\end{proof}

\begin{lemma}
Let $\Gamma_{\alpha} \subset \Gamma_{inv}$ be the sublattice generated by the \textbf{puncture monodromies:}
\begin{equation}
    \begin{aligned}
        \varepsilon_{i0} := e_{i01} + e_{i02} + e_{i03}&& \varepsilon_{i1} := e_{i10} + e_{i12} + e_{i13} \\ \varepsilon_{i2} := e_{i20} + e_{i21} + e_{i23} && \varepsilon_{i3} := e_{i30} + e_{i31} + e_{i32}
    \end{aligned}
\end{equation}
for $i =1,\ldots,t$, using the indexing conventions of Figure \ref{fig:gen-ideal-tet} with the tetrahedra index pretended.
We claim this is a unimodular sublattice whose associated quantum torus is central in $\WW^{inv}_\triangle$.
\end{lemma}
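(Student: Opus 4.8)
The plan is to reduce both claims, exactly as for Lemma \ref{lemma:gluing-lattice-quotient}, to a finite computation with the pairing matrix $\Omega'$ and the explicit generators of $\Gamma_{inv}$ from Proposition \ref{prop:invariant-subtorus}. The basic bookkeeping observation is that $\varepsilon_{ip}$ is the $0$--$1$ vector supported on the three short-edge coordinates at vertex $p$ of the $i$-th tetrahedron, and that the short-edge block of $\Omega_{tet}$ is an orthogonal direct sum of copies of $\Omega_{sh}=\left(\begin{smallmatrix}0&1&-1\\-1&0&1\\1&-1&0\end{smallmatrix}\right)$, one per vertex, with $\Omega_{sh}(1,1,1)^{\top}=0$.

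\emph{Centrality.} Since $\Winv$ is the quantum torus on $\Gamma_{inv}$ with pairing $\Omega'|_{\Gamma_{inv}}$, the monomial $X^{\varepsilon_{ip}}$ is central iff $\langle\varepsilon_{ip},\Omega'w\rangle=0$ for all $w\in\Gamma_{inv}$, and by Proposition \ref{prop:invariant-subtorus} it suffices to let $w$ range over $L$, $M$, the other puncture monodromies $\varepsilon_{jq}$, the thread monodromies $r_k$, and the skeletal variables $Y_k$. I would first note that the component of $w$ along the three short edges at vertex $(i,p)$ contributes $0$, because $\Omega_{sh}(1,1,1)^{\top}=0$; this already disposes of $\langle\varepsilon_{ip},\Omega'\varepsilon_{jq}\rangle$ (which in addition vanishes for $(i,p)\ne(j,q)$ since distinct vertices occupy distinct $\Omega_{sh}$-blocks) and of the short-edge part of every other generator, and a direct inspection shows $L,M$ have no long-edge or thread component incident to the gates of puncture $(i,p)$. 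The genuinely content-bearing case is $w\in\{r_k,Y_k\}$: although $\varepsilon_{ip}$ does \emph{not} commute with a bare long edge at vertex $(i,p)$, the thread- and short-edge corrections built into $r_k$ and $Y_k$ to make them $T$-invariant are precisely what cancels the $\Omega_{sh,lg}$-contribution against the $\Omega_{tet,th}$- and $\Omega_{th,th}$-contributions. The conceptual reason, and the clean way I would ultimately present the proof, is that $\varepsilon_{ip}$ is represented by a simple closed curve isotopic to one of the $4t$ boundary circles of $\Sigma_\triangle$; after all $12t$ gates are closed -- i.e. inside $\Winv\cong\bigl(\SkAlg^{int}(\Sigma_\triangle)[S_\triangle^{-1}]\bigr)^{T^{12t}}$ -- every element is represented by a $T^{12t}$-invariant stratified ribbon graph, and since oppositely-weighted $T$-region strands can be capped off at a gate, any such graph is isotopic to one avoiding a collar of that boundary circle, hence commutes with $\varepsilon_{ip}$.

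\emph{Unimodularity.} I would show the $4t$ vectors $\varepsilon_{ip}$ extend to a $\ZZ$-basis of $\Gamma_{inv}$, equivalently of $\Lambda'/\Gamma_{gluing}$. Before imposing $\Gamma_{gluing}$ they have pairwise disjoint supports (each lies in the span of its own vertex's three short edges) with all coefficients equal to $1$, so they extend to a basis of $\Lambda'$; and since $\Gamma_{gluing}$ is unimodular and isotropic (Lemma \ref{lemma:gluing-lattice-quotient}) with generators supported on the "eliminated" halves of the glued edge-pairs together with threads, a suitable choice of one dual short-edge coordinate for each $(i,p)$ produces functionals $\lambda_{ip}$ on $\Lambda'/\Gamma_{gluing}$ with $\lambda_{ip}([\varepsilon_{jq}])=\delta_{(i,p),(j,q)}$. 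These split the inclusion $\Gamma_\alpha\hookrightarrow\Gamma_{inv}$, so $\Gamma_{inv}/\Gamma_\alpha$ is torsion-free.

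The main obstacle is the $w\in\{r_k,Y_k\}$ part of the centrality check: confirming in the $\Omega_{sh,lg}$, $\Omega_{tet,th}$ and $\Omega_{th,th}$ blocks that the thread adjustments exactly cancel the long-edge contribution requires careful tracking of which threads are incident to which gates of puncture $(i,p)$. For that reason I would take the geometric capping-off argument as the actual proof and record the matrix identity only as a remark, following the pattern of Lemma \ref{lemma:gluing-lattice-quotient}.
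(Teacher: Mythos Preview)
Your final approach is correct and is exactly the paper's: centrality holds because the puncture monodromies are boundary-parallel curves and every $T$-invariant skein can be isotoped off a collar of the puncture, so disjoint skeins commute; unimodularity the paper argues in one line from linear independence and primitivity of the $\varepsilon_{ip}$, and your splitting-functional argument is simply a more careful version of the same idea. One caution about the computational sketch you rightly set aside: the assertion that $L,M$ have no thread component incident to the gates of puncture $(i,p)$ is false in general---e.g.\ for $4_1$ one has $M=q^{-1/2}a_{103}^{-1}x^{010}_{102}x^{101}_{010}$, with two threads incident to gate $010$ at puncture $(0,1)$---though their contributions do cancel, which is precisely the capping-off phenomenon you end up invoking.
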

The skein $\alpha_{ij} := V^{\varepsilon_{ij}}$ is represented by a $T$-region curve parallel to the $j$th puncture on the $i$th tetrahedron, hence the name.
\begin{proof}
    Unimodularity follows from the fact that the generators are linearly independent and not multiples of other elements in $\Lambda_{inv}$.
    
    Their centrality follows from the the fact that $T$-invariant skeins can be isotoped away from the punctures and hence can be made disjoint from the puncture monodromies. Disjoint skeins always commute. 
\end{proof}

We're now ready to prove Proposition \ref{prop:invariant-subtorus}.

\begin{proof}[Proof of Proposition \ref{prop:invariant-subtorus}]
The quantum cluster chart restricts to the invariant subalgebras by construction - we define the $T$ weights on the abstract quantum torus to match those of the skeins they model.

Excising the $G$-region annuli for a moment, we arrive at a $T$-region torus with $6t$ punctures. This will have a skein algebra generated the meridian, longitude, $4t$ puncture monodromies, and $2t$ thread monodromies.

With the $G$-regions, we must consider long edge skeins. 
Consider again the presentation of the decorated internal skein algebra of $\Sigma_\triangle$ with just one gate in Figure \ref{fig:coordinates-for-sigma-tri}.
The skeins $A_1,\ldots,A_{t}$ passing through the $t$ annular $G$-regions have weight +2 at this gate. Their formal inverses $A_i^{-1}$ are not skeins, but do have weight $-2$ for the $T$-action associated with the gate.
In fact, these are the only elements with weight $-2$ at a single gate -- any skein in the $T$-region is either invariant or has opposite weights at a pair of gates.

Only invariant non-trivial elements passing through the $G$-regions are therefore monomials in $A_i$'s and their formal inverses, which has with total degree zero.
One can verify by direct computation that these monomials are all products of the expressions $Y_{j} := A_tA_j^{-1}$ for $j = 1,\ldots,t$.
\end{proof}

Now we specialize the thread and puncture monodromies. 
    Let $\Gamma_{mon}\subset \Lambda_{inv}$ be the sublattice generated by the puncture and thread monodromies, and $\WW_{mon}\subset \WW_\triangle^{inv}$ be the associated quantum subtorus. Define a character $\chi_{mon} : \Gamma_{mon} \to \K$ by $\varepsilon_{ij} \mapsto q^{3/2}$, $r_{2k-1} \mapsto q^{1/2}q^{|r_{2k-1}|/2}$, $r_{2k} \mapsto q^{-1/2}q^{|r_{2k}|/2}$, where $|\cdot|$ denotes the number of threads in the thread monodromy.
    The appearance of $\pm 1/2$ in the value on thread monodromies comes from their relationship with the $G$-monodromy.

    By Lemma \ref{lemma:central-quotient}, the relative tensor product $\Winv\otimes_{\WW_{mon}} \K_{\chi_{mon}}$ is generated by the remaining variables, $L,M, Y_1,\ldots,Y_{t-1}$.

    We are now prepared to state our main result, which follows by construction and from excision of internal skein modules.
\begin{theorem}\label{thm:its-the-quantum-A-polynomial}
We have an equality of left ideals
\begin{equation}
\Iloc = \C_q[M^{\pm1},L^{\pm1}] \cap \left(\mathcal{I}_{thr} + \mathcal{I}_{bulk}\right),
\end{equation}
between the quantum A-ideal and the joint elimination ideal of the tetrahedron relations and thread relations. 
\end{theorem}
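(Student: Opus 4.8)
The plan is to unwind the definition of $\Iloc$ and match it, via the gluing formula for defect skein modules, against the relative tensor product \eqref{eq:excision-loc} computing $\SkMod^{loc}(\knotcomp)$. By definition $\Iloc$ is the annihilator inside $\C_q[M^{\pm1},L^{\pm1}]=\SkAlg_T(T^2)$ of the empty skein $[\varnothing]\in\SkMod^{loc}(\knotcomp)$, where $\C_q[M^{\pm1},L^{\pm1}]$ acts by inserting skeins near the torus boundary. So the whole theorem reduces to producing an isomorphism of $\C_q[M^{\pm1},L^{\pm1}]$-modules
\[
\SkMod^{loc}(\knotcomp)\;\cong\;\Winv/(\mathcal{I}_{thr}+\mathcal{I}_{bulk})
\]
under which $[\varnothing]$ corresponds to the class of $1\in\Winv$ and $M,L$ act by left multiplication by the corresponding generators. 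Granting this, $x\cdot[\varnothing]=[x\cdot 1]=[x]$ vanishes precisely when $x\in\mathcal{I}_{thr}+\mathcal{I}_{bulk}$, which is the asserted equality. Note also that this left action is well defined because $M$ and $L$ commute with the thread and puncture monodromies (Proposition \ref{prop:invariant-subtorus}), so left multiplication by $\C_q[M^{\pm1},L^{\pm1}]$ preserves the right ideal $\mathcal{I}_{thr}$ and (trivially) the left ideal $\mathcal{I}_{bulk}$; the same observation is what forces the right-hand intersection to be a genuine left ideal.

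The isomorphism is assembled by identifying the three tensor factors of \eqref{eq:excision-loc}. The middle factor is $\Winv$ itself, as the identity bimodule attached to the cylinder $\Sigma_\triangle\times I$ by Lemma \ref{lemma:gluing-for-int-skein-modules}. The bulk factor is the presentation $\SkMod^{int}(\Mbulk)[S^{-1}_\triangle]\cong\SkAlg^{int}(\Sigma_\triangle)[S^{-1}_\triangle]/\mathcal{I}_{bulk}$ proved just above (obtained from the single-tetrahedron computation of Lemma \ref{lemma:ideal-tet-presentation} by gluing); restricting to $T$-invariants, i.e. closing the gates, turns this into $\Winv/\mathcal{I}_{bulk}$ as a left $\Winv$-module, using that $\WW_\triangle$ is free over $\Winv$ (in the spirit of Lemma \ref{lemma:central-quotient}) so that restriction to invariants commutes with the quotient. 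The thread factor $\SkMod(M_{thr})$ needs a direct computation: $M_{thr}=(D^2\times I)^{\sqcup 2n}_{T}$ is a disjoint union of solid cylinders coloured by $T=\C^\times$, glued to $\Mbulk$ along the annuli $\mathrm{Ann}^{\sqcup 2n}$ carrying the thread monodromy curves of Lemma \ref{lemma:thread-monodromies}; filling in such a solid tube forces the skein running around it, namely the thread monodromy $r_i$, to act by the quantum dimension of the one-dimensional $T$-colour, corrected by the framing factor recorded in Lemma \ref{lemma:thread-monodromies}. Hence $\SkMod(M_{thr})$ is the rank-one $\WW_{thr}$-module $\K_{\chi_{mon}}$; into this step one folds the closing of the $4t$ vertex punctures, which by quantum Hamiltonian reduction specializes the puncture monodromies $\alpha_{ij}$ to their quantum dimensions, consistently with the same character $\chi_{mon}$.

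Putting these together, $\SkMod^{loc}(\knotcomp)\cong\K_{\chi_{mon}}\otimes_{\WW_{thr}}\Winv\otimes_{\Winv}(\Winv/\mathcal{I}_{bulk})$; the last tensor contracts to $\Winv/\mathcal{I}_{bulk}$, and then $\K_{\chi_{mon}}\otimes_{\WW_{thr}}(\Winv/\mathcal{I}_{bulk})\cong\Winv/(\mathcal{I}_{thr}+\mathcal{I}_{bulk})$ by the standard identity $(R/I)\otimes_R N\cong N/IN$ with $I=\ker\chi_{mon}$, unwinding $IN$ as the image of the right ideal $\mathcal{I}_{thr}=\sum_i(r_i-\chi_{mon}(r_i))\,\Winv$. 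All of this is legitimate because localization at $S_\triangle$, restriction to $T$-invariants, the quotients by $\mathcal{I}_{bulk}$ and $\mathcal{I}_{thr}$, and the gluing coends are all colimits and hence commute; this is also where the Ore property of the long-edge set $S_\triangle$ (Theorem \ref{thm:cluster-chart-for-sigma-tri}) is used to make the localized excision formula \eqref{eq:excision-loc} valid. Finally, tracing the ``insert boundary skein'' action of $\C_q[M^{\pm1},L^{\pm1}]$ through these identifications exhibits it as left multiplication by $M,L\in\Winv$, with $[\varnothing]$ the class of $1$, which is exactly the input to the reduction of the first paragraph.

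The main obstacle is the direct computation of $\SkMod(M_{thr})$ together with the precise bookkeeping of scalars: one must pin down $\chi_{mon}$ — the quantum dimensions and the framing corrections of Lemma \ref{lemma:thread-monodromies} — and verify that the resulting right ideal is exactly the $\mathcal{I}_{thr}$ of the statement, including the subtle interplay between the thread tubes and the closing of the vertex punctures. Alongside this one must check that $\Winv\subset\WW_\triangle$ is well behaved under all the colimit operations (freeness of $\WW_\triangle$ over $\Winv$), and keep careful track of which of $\mathcal{I}_{thr},\mathcal{I}_{bulk}$ is a left and which a right ideal according to the skein-composition conventions (Figure \ref{fig:internal-skein-details}). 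With the building-block presentations of Section \ref{sec:building-blocks} and the cluster chart of Theorem \ref{thm:cluster-chart-for-sigma-tri} in hand, the remainder is routine.
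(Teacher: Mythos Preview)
Your proposal is correct and follows essentially the same route as the paper: the paper's own ``proof'' is the single sentence that the result ``follows by construction and from excision of internal skein modules,'' and you have faithfully unpacked what that means, tracing through the relative tensor product \eqref{eq:excision-loc} and identifying each factor.

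Two small points of care. First, your blanket justification that ``localization, restriction to $T$-invariants, quotients, and gluing coends are all colimits and hence commute'' is not quite right as stated: taking $T$-invariants is a limit, not a colimit. The reason it nevertheless commutes with everything here is that $\Rep_q T$ is semisimple, so invariants and coinvariants agree and the functor is exact. Second, the generators of $\mathcal{I}_{bulk}$ as written in Lemma~\ref{lemma:ideal-tet-presentation} are not themselves $T$-invariant; what lives in $\Winv$ is the weight-zero part of the graded ideal they generate, obtained by multiplying each generator by a monomial of complementary weight (this is exactly what the ``scaled localised bulk relations'' in the worked examples are). Your freeness remark covers this, but it is worth making explicit that homogeneity of the generators (all three terms of each relation have the same weight, since they share the same boundary labelling) is what makes the passage to invariants clean.
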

Note that the image of $\SkAlg_{\Rep_qT}(T^2) \hookrightarrow \WW^{inv}_\triangle \otimes_{\WW_{mon}} \K_{\chi_{mon}}$ is the subalgebra generated by the longitude and meridian.

\subsection{Worked examples}\label{sec:worked-examples}

The software package SnapPy \cite{SnapPy} provides triangulations of knot complements.
For our purposes, it is useful to describe the triangulation as a family of set maps  $r_k : \{0,1,2,3\} \to \{0,\ldots,t-1\}$ and $s_{kj}:\{0,1,2,3\} \to \{0,1,2,3\}$ for $k = 0,\ldots, t-1$ and $j=0,1,2,3$.
The set $\{0,1,2,3\}$ indexes the vertices/faces of a tetrahedron, while $\{0,\ldots,t-1\}$ indexes the tetrahedra themselves.
Each $r_k$ describes which tetrahedra each face of the $k$th tetrahedra is glued, so that $r_k(i)$ is the destination tetrahedra for the $i$th face of the $k$ tetrahedra.
Each $s_{kj}$ describes in more detail the gluing map of the $j$th face of the $k$th tetrahedra.
More precisely, $s_{kj}(j)$ specifies the face on the distant tetrahedra  while the other three coordinates describe the the orientation of the gluing.

\subsubsection[The 4-1 knot]{The $4_1$ knot}\label{sec:41-example}
The gluing data for a two-tetrahedra triangulation of the $4_1$ knot complement is:
\begin{equation}\label{eq:triangulation-data}
    \begin{aligned}
        \rho_0 &= (1,1,1,1); &s_{00} &= (0,1,3,2), & s_{01} &= (1,3,0,2),& s_{02} &= (1,0,2,3),& s_{03} &= (2,0,3,1) \\
        \rho_1 &= (0,0,0,0); & s_{10} &= (0,1,3,2), &s_{11} &= (1,3,0,2),& s_{12} &= (1,0,2,3),& s_{13} &= (2,0,3,1)  
    \end{aligned}
\end{equation}
Where the $i$th component denotes the value of the map on $i$. 
Together $\rho_0$ and $\rho_1$ tell us that every face of the index 0 (resp. 1) tetrahedron is glued to some face in the index 1 (resp. 0) tetrahedron.
From $s_{00}(0) = 0, s_{01}(1) = 3, s_{02}(2) = 2,$ and $s_{03}(3) = 1$, we learn where each face in the index 0 tetrahedron is glued, e.g. the index zero faces in the two tetrahedra are glued together.

Two gates are connected by a thread precisely when they would be identified in the truncated triangulation. We will therefore name the threads according to which gates they connect, i.e. $\theta_{g_0,g_1}$ has weight +1 at gate $g_0$ and weight -1 at gate $g_1$.

Gates are specified by three indices $t,p,e$: the index $t$ specifies the tetrahedra, the index $p$ specifies the puncture, and the index $e$ specifies which puncture the incident long edge connects.
Let $f_{ti}$ and $f_{tj}$ be two faces of the index $t$ tetrahedron's and $i,j$ are the indices of their unique non-adjacent punctures.
Note that $i\neq j \neq p$.
Then a gate $v_{tpe}$ shares a thread with the vertices of index $\rho_t(i),s_{ti}(p), s_{ti}(e)$ and $\rho_t(j),s_{tj}(p), s_{tj}(e)$.

For the gluing data \eqref{eq:triangulation-data}, we find the following 24 threads:
\begin{equation}\label{eq:ex_41_threads}
    \begin{aligned}
    \theta^{012}_{113}, &&\theta^{113}_{003}, &&\theta^{003}_{112}, &&\theta^{112}_{013}, &&\theta^{013}_{103}, &&\theta^{103}_{012},
    \theta^{002}_{123}, &&\theta^{123}_{032}, &&\theta^{032}_{120}, &&\theta^{120}_{001}, &&\theta^{001}_{110}, &&\theta^{110}_{002},\\
    \theta^{030}_{131}, &&\theta^{131}_{021}, &&\theta^{021}_{130}, &&\theta^{130}_{031}, &&\theta^{031}_{121}, &&\theta^{121}_{030},
    \theta^{102}_{023}, &&\theta^{023}_{132}, &&\theta^{132}_{020}, &&\theta^{020}_{101}, &&\theta^{101}_{010}, &&\theta^{010}_{102},
    \end{aligned}
\end{equation}
together with the short and long edge generators:
\begin{equation}
    \begin{aligned}
e_{001}, e_{002}, e_{003}, e_{010}, e_{013}, e_{012}, e_{020}, e_{021}, e_{023}, e_{030}, e_{032}, e_{031},E_{001}, E_{002}, E_{003}, E_{012}, E_{013}, E_{023}; \\
e_{101}, e_{102}, e_{103}, e_{110}, e_{113}, e_{112}, e_{120}, e_{121}, e_{123}, e_{130}, e_{132}, e_{131}, E_{101}, E_{102}, E_{103}, E_{112}, E_{113}, E_{123}.
    \end{aligned}
\end{equation}

\begin{figure}
    \centering
    \includegraphics{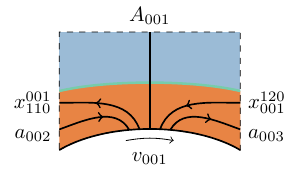}
    \caption{An example of the various generators of $\mathbb{W}_{\Lambda'}$ incident to a single vertex. Shown here for the gate $v_{001}$ in the triangulated $4_1$ knot complement considered in Section \ref{sec:41-example}, but the configuration of short edges, threads, and long edges is the same at each gate regardless of the triangulated knot complement.}
    \label{fig:threads-at-a-gate}
\end{figure}

Next, the gluing sublattice $\Gamma_{gluing}$ is generated by the following terms:
\begin{equation}\label{eq:ex_41_gluing}
\begin{aligned}
    e_{110} + \theta^{112}_{013} + e_{010} + \theta^{012}_{113},&& e_{120} + \theta^{123}_{032} + e_{030} + \theta^{031}_{121},\\
    e_{130} + \theta^{131}_{021} + e_{020} + \theta^{023}_{132},&& e_{101} + \theta^{103}_{012} + e_{013} + \theta^{010}_{102},\\
    e_{121} + \theta^{120}_{001} + e_{003} + \theta^{002}_{123},&& e_{131} + \theta^{132}_{020} + e_{023} + \theta^{021}_{130},\\
    e_{102} + \theta^{101}_{010} + e_{012} + \theta^{013}_{103},&& e_{112} + \theta^{113}_{003} + e_{002} + \theta^{001}_{110},\\
    e_{132} + \theta^{130}_{031} + e_{032} + \theta^{030}_{131},&& e_{103} + \theta^{102}_{023} + e_{021} + \theta^{020}_{101},\\
    e_{113} + \theta^{110}_{002} + e_{001} + \theta^{003}_{112},&& e_{123} + \theta^{121}_{030} + e_{031} + \theta^{032}_{120},\\
    &&\\
    \theta^{012}_{113} -\theta^{131}_{021} - E_{012} + E_{113}, && \theta^{020}_{101} -\theta^{110}_{002} - E_{002} + E_{101}, \\
    \theta^{113}_{003} -\theta^{030}_{131} - E_{113} + E_{003}, && \theta^{123}_{032} -\theta^{023}_{132} - E_{123} + E_{023}, \\
    \theta^{002}_{123} -\theta^{132}_{020} - E_{002} + E_{123}, && \theta^{112}_{013} -\theta^{031}_{121} - E_{112} + E_{013}, \\
    \theta^{103}_{012} -\theta^{021}_{130} - E_{103} + E_{012}, && x^{121}_{030} -x^{003}_{112} - E_{112} + E_{003}, \\
    \theta^{001}_{110} -x^{101}_{010} - E_{001} + E_{101}, && \theta^{013}_{103} -\theta^{130}_{031} - E_{013} + E_{103}, \\
    \theta^{010}_{102} -\theta^{120}_{001} - E_{001} + E_{102}, && \theta^{032}_{120} -\theta^{102}_{023} - E_{023} + E_{102}.
\end{aligned}
\end{equation}

We quotient by the gluing relations, restrict to the $T$-invariant subalgebra, then quotient by the puncture monodromies to get a presentation of $\SkAlg(\overline{\Sigma}_\triangle)[S^{-1}_\triangle]$. This is generated by the following elements, where we've used the notation $x := X^{\theta}, a := X^{e}, A := X^{E}$ when translating from lattice coordinates.
\begin{description}
  \item[Meridian and Longitude:]
\begin{equation}
  \begin{aligned}
    M &= q^{-1/2} a_{103}^{-1} x^{010}_{102} x^{101}_{010} \\
    L &= q^{-3/2} a_{013}^{-1} a_{103}^{-1} a_{112}^{-1} a_{120}^{-1} a_{130}^{-1} x^{012}_{113} x^{020}_{101} x^{030}_{131} x^{002}_{123} x^{010}_{102} x^{132}_{020} (x^{101}_{010})^{2} x^{110}_{002} x^{121}_{030}
  \end{aligned}
\end{equation}
\item[Thread monodromy:] 
\begin{equation}
    r = x^{012}_{113} x^{003}_{112} x^{013}_{103} x^{112}_{013} x^{103}_{012} x^{113}_{003}
\end{equation}
\item[Skeletal variable:] 
\begin{equation}
    Y= a_{001} a_{131}^{-1} A_{113} A_{123}^{-1} (x^{002}_{123})^{-1} (x^{021}_{130})^{-1} (x^{131}_{021})^{-1} (x^{113}_{003})^{-1}
\end{equation}
  \end{description}

These are subject to the following commutation relations:
\begin{equation}\begin{aligned}
  L M = q M L,\hspace{1cm}&&
  Y L = q^2 L Y,\hspace{1cm}&&
 Y r = q^{-1} r Y.
 \end{aligned}
 \end{equation}

 The bulk relations are 
 \begin{equation}\begin{aligned}
   B_1 =  q^{23/2} M^{-2} L^{-1} r^{-3} Y^{-1}+ q M^2 r Y  + q^{3/2},\hspace{1cm}&&
   B_2 = q^{1/2} L r Y + q r^{-1} Y^{-1} + q^{3/2}
 \end{aligned}\end{equation}

\subsubsection[The 4-1 knot after a Pachner move]{The $4_1$ knot after a Pachner move}
We consider a different triangulation of the $4_1$ knot complement with three tetrahedra.
This new triangulation was obtained by performing a $(2,3)$-Pachner move on face $\#3$ of tetrahedron $\#0$ in the triangulation considered just above.
As explained in Section \ref{sec:41-example}, we have the following gluing maps:
\begin{equation}
    \begin{aligned}
        \rho_0 &= (1, 2, 1, 2) & s_{00} &= (1, 3, 0, 2) & s_{01} &= (2, 0, 3, 1) & s_{02} &= (0, 1, 3, 2) & s_{03} &= (0, 1, 3, 2) \\
        \rho_1 &= (2, 0, 2, 0) & s_{10} &= (1, 2, 3, 0) & s_{11} &= (2, 0, 3, 1) & s_{12} &= (0, 1, 3, 2) & s_{13} &= (0, 1, 3, 2)\\
        \rho_2 &= (0, 1, 0, 1) & s_{20} &= (1, 3, 0, 2) & s_{21} &= (3, 0, 1, 2) & s_{22} &= (0, 1, 3, 2) & s_{23} &= (0, 1, 3, 2)
    \end{aligned}
\end{equation}
We have the following generators for $\SkAlg(\overline{\Sigma}_\triangle)[S^{-1}_\triangle]$:

We have the following generators for $\SkAlg(\overline{\Sigma}_\triangle)[S^{-1}_\triangle]$:
\begin{description}
  \item[Meridian and Longitude:]
  \begin{equation}
    \begin{aligned}
      M &= q^{-1/2} a_{231}^{-1} x^{020}_{232} x^{230}_{020} \\
      L &= q^{-3/2} a_{021}^{-1} a_{103}^{-1} a_{121}^{-1} a_{131}^{-1} a_{202}^{-1} a_{231}^{-2} x^{023}_{102} (x^{020}_{232})^{2} x^{030}_{120} x^{123}_{230} x^{101}_{201} x^{113}_{212} x^{130}_{220} x^{212}_{030} x^{203}_{132} x^{220}_{113} (x^{230}_{020})^{3}
    \end{aligned}
  \end{equation}
\item[Thread monodromies:]
  \begin{equation}
    \begin{aligned}
      r_1 &= q^{1/2} x^{012}_{130} x^{032}_{213} x^{030}_{120} x^{120}_{032} x^{113}_{212} x^{130}_{220} x^{212}_{030} x^{220}_{113} x^{213}_{012}
\\
      r_2 &= q^{-1/2} x^{010}_{210} x^{110}_{010} x^{210}_{110}
    \end{aligned}
  \end{equation}

\item[Skeletal variable:]
  \begin{equation}
    \begin{aligned}
      Y_1 &= q^{-3/2} a_{001} a_{223}^{-1} a_{232}^{-1} A_{213} A_{223}^{-1} x^{003}_{221} (x^{020}_{232})^{-1} x^{032}_{213} x^{030}_{120} x^{120}_{032} x^{113}_{212} x^{212}_{030} x^{223}_{002} x^{220}_{113} (x^{230}_{020})^{-1} \\
      Y_2 &= q^{-1/2} a_{002} a_{212}^{-1} A_{201} A_{213}^{-1} (x^{003}_{221})^{-1} (x^{021}_{231})^{-1} (x^{131}_{202})^{-1} (x^{103}_{021})^{-1} (x^{201}_{001})^{-1} (x^{202}_{103})^{-1} (x^{221}_{131})^{-1}
    \end{aligned}
  \end{equation}

\end{description}

These are subject to the following commutation relations:
\begin{equation}\begin{aligned}
  L  M &= q  M  L, \\
  Y_1  M = q  M  Y_1 &\qquad
  Y_1  L = q^2  L  Y_1 \\
  Y_1  r_1 = q^{-1}  r_1  Y_1 \qquad
  Y_2  r_1 &= q  r_1  Y_2 \qquad
  Y_2  r_2 = q^{-1}  r_2  Y_2 \\
 \end{aligned}
 \end{equation}

The scaled localised bulk relations, i.e. the generators of $I_{bulk}[S^{-1}_\triangle]\otimes_{\WW_{mon}} \K_{\chi_{mon}}$, are
\begin{equation}
  \begin{aligned}
    B_1 &=q^{3}  M^2  L^{-1}  r_{1}^{-2}  r_{2}^{-1}  Y_1^2  Y_2 + q^{5/2}  M^2  L^{-1}  r_{1}^{-2}  Y_1^2 + q^{3/2}\\
    B_2 &= q^{3}  M^{-2}  L  r_1^2  r_2  Y_{1}^{-1}  Y_2 + q^{1/2}  M^{-4}  L  Y_{1}^{-1} + q^{3/2} \\
    B_3 &= q^{3/2}  M^2  r_1^2  r_2^2  Y_{1}^{-1} + q^{4}  r_1^2  r_2  Y_{1}^{-1}  Y_2 + q^{3/2} 
  \end{aligned}
\end{equation}

\subsubsection[The 3-1 knot]{The $3_1$ knot}
The trefoil is notable for being non-hyperbolic, meaning one could worry that the hyperbolic geometry based methods introduced in \cite{Dimofte2011} on which we build might break down.
This is fortunately not an issue.

Using the gluing data notation introduced in Section \ref{sec:41-example}, we consider the following triangulation of the trefoil knot complement:
\begin{equation}
    \begin{aligned}
        \rho_0 &= (1, 1, 1, 1) & s_{00} &= (0, 1, 3, 2) & s_{01} &= (3, 2, 0, 1) & s_{02} &= (2, 0, 3, 1) & s_{03} &= (2, 0, 3, 1) \\
        \rho_1 &= (0, 0, 0, 0) & s_{10} &= (0, 1, 3, 2) & s_{11} &= (1, 3, 0, 2) & s_{12} &= (2, 3, 1, 0) & s_{13} &= (1, 3, 0, 2)
    \end{aligned}
\end{equation}
We have the following generators for $\SkAlg(\overline{\Sigma}_\triangle)[S^{-1}_\triangle]$:
\begin{description}
  \item[Meridian and Longitude:]
  \begin{equation}
    \begin{aligned}
      M &= q^{1/2} a_{030}^{-1} x^{032}_{110} x^{110}_{031}\\
      L &= q^{9/2} a_{012}^{-1} a_{020}^{-1} a_{030}^{-3} a_{103}^{-1} a_{113}^{-1} (x^{032}_{110})^{4} x^{002}_{123} x^{010}_{102} x^{021}_{130} x^{112}_{013} x^{123}_{032} x^{101}_{023} x^{130}_{002} (x^{110}_{031})^{3}
    \end{aligned}
  \end{equation}
\item[Thread monodromy:]
  \begin{equation}
    \begin{aligned}
      r &= x^{012}_{113} x^{023}_{132} x^{020}_{103} x^{013}_{101} x^{030}_{112} x^{112}_{013} x^{103}_{012} x^{132}_{020} x^{101}_{023} x^{113}_{030}
    \end{aligned}
  \end{equation}

\item[Skeletal variable:]
  \begin{equation}
    \begin{aligned}
      Y &= a_{013} a_{123}^{-1} A_{102} A_{123}^{-1} (x^{003}_{131})^{-1} x^{020}_{103} (x^{002}_{123})^{-1} (x^{021}_{130})^{-1} (x^{131}_{021})^{-1} x^{103}_{012} x^{132}_{020} (x^{130}_{002})^{-1} (x^{102}_{010})^{-1} (x^{121}_{003})^{-1}
    \end{aligned}
  \end{equation}

\end{description}

These are subject to the following commutation relations:
\begin{equation}\begin{aligned}
  L  M = q  M  L,\qquad
 Y  L = q^{-1}  L  Y,\qquad
 Y  r = q  r  Y,
 \end{aligned}
 \end{equation}

The scaled localised bulk relations, i.e. the generators of $I_{bulk}[S^{-1}_\triangle]\otimes_{\WW_{mon}} \K_{\chi_{mon}}$, are
\begin{equation}
  \begin{aligned}
    B_1 &= q^{5} M^{-4} L r^{-1} + q^{7} r^{-1} Y + q^{3/2} \\
    B_2 &= q^{-9/2} M^6 L^{-1} Y^{-1} + q^{-5} M^2 r Y^{-1} + q^{3/2}
  \end{aligned}
\end{equation}

\subsubsection[The 5-1 knot]{The $5_1$ knot}

Using the gluing data notation introduced in Section \ref{sec:41-example}, we consider the following triangulation of the $5_1$ knot complement:
\begin{equation}
    \begin{aligned}
\rho_{0} &= (1,1,2,2) & s_{00} &= (0,1,3,2) & s_{01} &= (1,3,0,2) & s_{02} &= (0,1,3,2) & s_{03} &= (0,1,3,2)\\ \rho_{1} &= (0,2,2,0) & s_{10} &= (0,1,3,2) & s_{11} &= (2,0,3,1) & s_{12} &= (0,2,1,3) & s_{13} &= (2,0,3,1)\\ \rho_{2} &= (1,1,0,0) & s_{20} &= (1,3,0,2) & s_{21} &= (0,2,1,3) & s_{22} &= (0,1,3,2) & s_{23} &= (0,1,3,2)
    \end{aligned}
\end{equation}
We have the following generators for $\SkAlg(\overline{\Sigma}_\triangle)[S^{-1}_\triangle]$:
\begin{description}
  \item[Meridian and Longitude:]
  \begin{equation}
    \begin{aligned}
      M &= q^{1/2} a_{021}^{-1} a_{131}^{-1} x^{023}_{132} x^{130}_{230} x^{230}_{020}\\
      L &= q^{31} a_{021}^{-8} a_{101}^{-1} a_{120}^{-1} a_{131}^{-9} a_{232}^{-1} (x^{023}_{132})^{9} x^{030}_{220} x^{002}_{203} (x^{130}_{230})^{9} x^{102}_{023} x^{110}_{002} x^{121}_{030} x^{231}_{123} x^{203}_{103} x^{220}_{110} (x^{230}_{020})^{8}
    \end{aligned}
  \end{equation}
\item[Thread monodromies:]
  \begin{equation}
    \begin{aligned}
      r_1 &= q^{1/2} x^{012}_{113} x^{023}_{132} x^{132}_{213} x^{113}_{223} x^{102}_{023} x^{223}_{102} x^{213}_{012} \\
      r_2 &= q^{-1/2} x^{031}_{121} x^{030}_{220} x^{002}_{203} x^{103}_{221} x^{110}_{002} x^{121}_{030} x^{203}_{103} x^{220}_{110} x^{221}_{031}
    \end{aligned}
  \end{equation}

\item[Skeletal variables:]
  \begin{equation}
    \begin{aligned}
      Y_1 &= q^{-1/2} a_{001} a_{213} a_{212} a_{231} A_{212}^{-1} A_{223}\\
          &\times (x^{023}_{132})^{-1} (x^{020}_{101})^{-1} (x^{002}_{203})^{-1} (x^{103}_{221})^{-1} (x^{132}_{213})^{-1} (x^{101}_{202})^{-1} (x^{102}_{023})^{-1} (x^{223}_{102})^{-1} (x^{203}_{103})^{-1} (x^{230}_{020})^{-1} (x^{202}_{003})^{-1}\\
      Y_2 &= q^{1} a_{002} a_{212}^{-1} a_{231}^{-1} A_{201} A_{223}^{-1} x^{023}_{132} x^{020}_{101} x^{132}_{213} x^{101}_{202} x^{102}_{023} x^{223}_{102} (x^{201}_{001})^{-1} x^{230}_{020} x^{202}_{003}
    \end{aligned}
  \end{equation}

\end{description}

These are subject to the following commutation relations:
\begin{equation}\begin{aligned}
  L  M = q  M  L,\qquad
  Y_1  M &= q  M  Y_1,\qquad
 Y_2  M = q^{-1}  M  Y_2,\\
  Y_1  L = q^{10}  L  Y_1,&\qquad
 Y_2  L = q^{-9}  L  Y_2,\\
  Y_1  r_1 = q^{-1}  r_1  Y_1,&\qquad
  Y_2  r_1 = q  r_1  Y_2,\\
  Y_1  r_2 &= q  r_2  Y_1,
 \end{aligned}
 \end{equation}

The scaled localised bulk relations, i.e. the generators of $I_{bulk}[S^{-1}_\triangle]\otimes_{\WW_{mon}} \K_{\chi_{mon}}$, are
\begin{equation}
  \begin{aligned}
    B_1 &= q^{-5/2}  M^{11}  L^{-1}  r_1  r_2  Y_1^2  Y_2 + q^{2}  M^2  r_1  Y_1 + q^{3/2}\\
    B_2 &= q^{-7/2}  M^{-10} L  r_1^{-2}  r_2^{-2}  Y_1^{-2} + q^{3}  r_1  Y_1^{-1} + q^{3/2}\\
    B_3 &= q^{2}  M^{-2}  r_1^{-3}  r_2^{-2}  Y_1^{-1} + q^{2}  M^{-1}  r_1^{-2}  r_2^{-1}  Y_1  Y_2 + q^{3/2}
  \end{aligned}
\end{equation}

\subsubsection[The 5-2 knot]{The $5_2$ knot}

Using the gluing data notation introduced in Section \ref{sec:41-example}, we consider the following triangulation of the $5_2$ knot complement:
\begin{equation}
    \begin{aligned}
\rho_{0} &= (1,1,2,2) & s_{00} &= (0,1,3,2) & s_{01} &= (1,3,0,2) & s_{02} &= (0,1,3,2) & s_{03} &= (1,3,0,2)\\ \rho_{1} &= (0,2,2,0) & s_{10} &= (0,1,3,2) & s_{11} &= (0,1,3,2) & s_{12} &= (1,3,0,2) & s_{13} &= (2,0,3,1)\\ \rho_{2} &= (1,1,0,0) & s_{20} &= (2,0,3,1) & s_{21} &= (0,1,3,2) & s_{22} &= (2,0,3,1) & s_{23} &= (0,1,3,2)
    \end{aligned}
\end{equation}
We have the following generators for $\SkAlg(\overline{\Sigma}_\triangle)[S^{-1}_\triangle]$:
\begin{description}
  \item[Meridian and Longitude:]
  \begin{equation}
    \begin{aligned}
      M &= q^{1/2} a_{021}^{-1} a_{131}^{-1} x^{023}_{132} x^{130}_{230} x^{230}_{020} \\
      L &= q^{31} a_{021}^{-8} a_{101}^{-1} a_{120}^{-1} a_{131}^{-9} a_{232}^{-1} (x^{023}_{132})^{9} x^{030}_{220} x^{002}_{203} (x^{130}_{230})^{9} x^{102}_{023} x^{110}_{002} x^{121}_{030} x^{231}_{123} x^{203}_{103} x^{220}_{110} (x^{230}_{020})^{8}
    \end{aligned}
  \end{equation}
\item[Thread monodromies:]
  \begin{equation}
    \begin{aligned}
      r_1 &= q^{1/2} x^{012}_{113} x^{023}_{132} x^{132}_{213} x^{113}_{223} x^{102}_{023} x^{223}_{102} x^{213}_{012}\\
      r_2 &= q^{-1/2} x^{031}_{121} x^{030}_{220} x^{002}_{203} x^{103}_{221} x^{110}_{002} x^{121}_{030} x^{203}_{103} x^{220}_{110} x^{221}_{031}
    \end{aligned}
  \end{equation}

\item[Skeletal variables:]
  \begin{equation}
    \begin{aligned}
      Y_1 &= q^{-1/2} a_{001} a_{213} a_{212} a_{231} A_{212}^{-1} A_{223} \\
          &\times (x^{023}_{132})^{-1} (x^{020}_{101})^{-1} (x^{002}_{203})^{-1} (x^{103}_{221})^{-1} (x^{132}_{213})^{-1} (x^{101}_{202})^{-1} (x^{102}_{023})^{-1} (x^{223}_{102})^{-1} (x^{203}_{103})^{-1} (x^{230}_{020})^{-1} (x^{202}_{003})^{-1}\\
      Y_2 &= q^{1} a_{002} a_{212}^{-1} a_{231}^{-1} A_{201} A_{223}^{-1} x^{023}_{132} x^{020}_{101} x^{132}_{213} x^{101}_{202} x^{102}_{023} x^{223}_{102} (x^{201}_{001})^{-1} x^{230}_{020} x^{202}_{003}
    \end{aligned}
  \end{equation}

\end{description}

These are subject to the following commutation relations:
\begin{equation}\begin{aligned}
  L  M = q  M  L,\qquad
  Y_1  M &= q  M  Y_1,\qquad
 Y_2  M = q^{-1}  M  Y_2,\\
         Y_1  L = q^{10}  L  Y_1,&\qquad
 Y_2  L = q^{-9}  L  Y_2,\\
         Y_1  r_1 = q^{-1}  r_1  Y_1,&\qquad
 Y_2  r_1 = q  r_1  Y_2,\\
         Y_1  r_2 &= q  r_2  Y_1
 \end{aligned}
 \end{equation}

The scaled localised bulk relations, i.e. the generators of $I_{bulk}[S^{-1}_\triangle]\otimes_{\WW_{mon}} \K_{\chi_{mon}}$, are
\begin{equation}
  \begin{aligned}
    B_1 &= q^{-5/2}  M^{11}  L^{-1}  r_1  r_2  Y_1^2  Y_2 + q^{2}  M^2  r_1  Y_1 + q^{3/2}\\
    B_2 &= q^{-7/2}  M^{-10}  L  r_1^{-2}  r_2^{-2}  Y_1^{-2} + q^{3}  r_1  Y_1^{-1} + q^{3/2}\\
    B_3 &= q^{2}  M^{-2} r_1^{-3}  r_2^{-2}  Y_1^{-1} + q^{2}  M^{-1}  r_1^{-2}  r_2^{-1}  Y_1  Y_2 + q^{3/2}
  \end{aligned}
\end{equation}

\subsubsection[The 8-9 knot]{The $8_9$ knot}
Here we consider a knot with eight crossings. 
Using the gluing data notation introduced in Section \ref{sec:41-example}, we consider the following triangulation of the $8_9$ knot complement:
\begin{equation}
    \begin{aligned}
\rho_{0} &= (1,2,3,3) & s_{00} &= (0,1,3,2) & s_{01} &= (0,1,3,2) & s_{02} &= (0,1,3,2) & s_{03} &= (1,3,0,2)\\ \rho_{1} &= (0,4,5,2) & s_{10} &= (0,1,3,2) & s_{11} &= (0,1,3,2) & s_{12} &= (0,1,3,2) & s_{13} &= (1,0,2,3)\\ \rho_{2} &= (6,0,3,1) & s_{20} &= (0,1,3,2) & s_{21} &= (0,1,3,2) & s_{22} &= (3,2,0,1) & s_{23} &= (1,0,2,3)\\ \rho_{3} &= (2,6,0,0) & s_{30} &= (2,3,1,0) & s_{31} &= (2,3,1,0) & s_{32} &= (2,0,3,1) & s_{33} &= (0,1,3,2)\\ \rho_{4} &= (7,1,5,7) & s_{40} &= (0,1,3,2) & s_{41} &= (0,1,3,2) & s_{42} &= (0,2,1,3) & s_{43} &= (0,2,1,3)\\ \rho_{5} &= (7,4,6,1) & s_{50} &= (2,0,3,1) & s_{51} &= (0,2,1,3) & s_{52} &= (1,0,2,3) & s_{53} &= (0,1,3,2)\\ \rho_{6} &= (2,7,5,3) & s_{60} &= (0,1,3,2) & s_{61} &= (0,1,3,2) & s_{62} &= (1,0,2,3) & s_{63} &= (3,2,0,1)\\ \rho_{7} &= (4,6,5,4) & s_{70} &= (0,1,3,2) & s_{71} &= (0,1,3,2) & s_{72} &= (1,3,0,2) & s_{73} &= (0,2,1,3)
    \end{aligned}
\end{equation}
We have the following generators for $\SkAlg(\overline{\Sigma}_\triangle)[S^{-1}_\triangle]$:
\begin{description}
  \item[Meridian and Longitude:]
  \begin{equation}
    \begin{aligned}
      M &= q^{-1/2} a_{430}^{-1} a_{530}^{-1} a_{631}^{-1} x^{032}_{223} x^{123}_{032} x^{223}_{632} x^{432}_{123} x^{532}_{431} x^{630}_{531} \\
      L &= q^{-9/2} a_{003}^{-1} a_{031}^{-1} a_{102}^{-1} a_{302}^{-1} a_{321}^{-1} a_{332}^{-1} a_{430}^{-2} a_{530}^{-1} a_{631}^{-1} a_{702}^{-1} a_{710}^{-1} a_{732}^{-1} \\
      &\times x^{032}_{223} x^{001}_{301} x^{030}_{320} (x^{123}_{032})^{2} x^{103}_{402} x^{223}_{632} x^{203}_{002} x^{210}_{101} x^{323}_{210} x^{331}_{203} x^{303}_{620} (x^{432}_{123})^{2} x^{402}_{701} x^{421}_{712} (x^{532}_{431})^{2} x^{620}_{730} x^{630}_{531} x^{602}_{330} x^{731}_{421} x^{703}_{602} x^{713}_{532}
    \end{aligned}
  \end{equation}
\item[Thread monodromies:]
  \begin{equation}
    \begin{aligned}
      r_1 &= x^{012}_{113} x^{113}_{512} x^{330}_{012} x^{512}_{703} x^{602}_{330} x^{703}_{602} \\
r_2 &= q^{-1} x^{003}_{202} x^{013}_{312} x^{112}_{013} x^{231}_{621} \\
r_3 &= x^{202}_{112} x^{312}_{231} x^{302}_{003} x^{621}_{302} \\
r_4 &= x^{032}_{223} x^{123}_{032} x^{223}_{632} x^{432}_{123} x^{632}_{723} x^{723}_{432} \\
r_5 &= q^{-1} x^{130}_{520} x^{420}_{130} x^{410}_{720} x^{531}_{710} x^{520}_{410} x^{630}_{531} x^{720}_{630} x^{710}_{420} \\
r_6 &= q^{-1/2} x^{412}_{713} x^{431}_{721} x^{532}_{431} x^{713}_{532} x^{721}_{412} \\
r_7 &= q^{-1/2} x^{102}_{212} x^{212}_{613} x^{403}_{102} x^{503}_{403} x^{613}_{503} \\
r_8 &= q^{-1/2} x^{101}_{501} x^{210}_{101} x^{323}_{210} x^{501}_{610} x^{610}_{323}
    \end{aligned}
  \end{equation}

\item[Skeletal variables:]
  \begin{equation}
    \begin{aligned}
      Y_1 &= q^{-3} a_{001} a_{332}^{-1} a_{623}^{-1} a_{713} a_{723} a_{732}^{-2} A_{703}^{-1} A_{713} \\
      &\times x^{203}_{002} x^{331}_{203} (x^{302}_{003})^{-1} (x^{431}_{721})^{-1} x^{421}_{712} (x^{531}_{710})^{-1} (x^{532}_{431})^{-1} x^{620}_{730} (x^{630}_{531})^{-1} x^{602}_{330} (x^{621}_{302})^{-1} x^{731}_{421} x^{703}_{602} (x^{720}_{630})^{-1} (x^{713}_{532})^{-1}
      \\
Y_2 &= q^{-1/2} a_{002} a_{332}^{-1} A_{313} a_{623} A_{703}^{-1} (x^{313}_{001})^{-1} x^{302}_{003} (x^{620}_{730})^{-1} x^{602}_{330} x^{621}_{302} x^{703}_{602} \\
Y_3 &= a_{030} a_{623} A_{612} a_{731}^{-1} A_{723}^{-1} x^{123}_{032} (x^{213}_{321})^{-1} (x^{321}_{031})^{-1} x^{432}_{123} (x^{612}_{213})^{-1} (x^{620}_{730})^{-1} x^{723}_{432} \\
Y_4 &= q^{1/2} a_{101} a_{632} A_{613}^{-1} A_{702} x^{403}_{102} (x^{401}_{502})^{-1} x^{503}_{403} (x^{502}_{103})^{-1} x^{613}_{503} (x^{720}_{630})^{-1} (x^{702}_{401})^{-1} \\
Y_5 &= q^{-3/2} a_{113} a_{612}^{-1} a_{623} a_{632} A_{601} A_{613}^{-1} a_{713}^{-1} a_{732}\\
&\times  (x^{013}_{312})^{-1} (x^{112}_{013})^{-1} (x^{231}_{621})^{-1} (x^{312}_{231})^{-1} (x^{421}_{712})^{-1} x^{531}_{710} (x^{510}_{110})^{-1} (x^{620}_{730})^{-1} (x^{601}_{510})^{-1} x^{630}_{531} (x^{731}_{421})^{-1} \\
Y_6 &= q^{-3/2} a_{121} a_{632} a_{723} a_{732}^{-1} a_{731}^{-1} A_{713} A_{723}^{-1} (x^{221}_{120})^{-1} (x^{431}_{721})^{-1} x^{432}_{123} (x^{532}_{431})^{-1} (x^{631}_{221})^{-1} x^{723}_{432} (x^{720}_{630})^{-1} (x^{713}_{532})^{-1} \\
Y_7 &= q^{-1} a_{210} a_{623}^{-1} a_{632}^{-1} A_{612}^{-1} A_{613} a_{713} a_{732}^{-1} x^{212}_{613} x^{421}_{712} (x^{531}_{710})^{-1} x^{612}_{213} x^{620}_{730} (x^{630}_{531})^{-1} x^{731}_{421}
    \end{aligned}
  \end{equation}

\end{description}

These are subject to the following commutation relations:
\begin{equation}\begin{aligned}
L M = q M L, &&
Y_3 M = q M Y_3,&&
Y_4 M = q^{-1} M Y_4,&&
Y_5 M = q^{-1} M Y_5,&&
Y_7 M = q M Y_7, \\
Y_1 L = q L Y_1, &&
Y_3 L = q^2 L Y_3, &&
Y_4 L = q^{-2} L Y_4, &&
Y_5 L = q^{-1} L Y_5, &&
Y_7 L = q L Y_7, && \\
Y_1 r_1 = q r_1 Y_1, &&
Y_2 r_1 = q r_1 Y_2, &&
Y_3 r_2 = q^{-1} r_2 Y_3, &&
 Y_7 r_2 = q r_2 Y_7, &&\\
 Y_3 r_3 = q r_3 Y_3, &&
 Y_6 r_3 = q r_3 Y_6, &&
 Y_4 r_4 = q^{-1} r_4 Y_4, &&
 Y_1 r_5 = q^{-1} r_5 Y_1, &&
 Y_6 r_5 = q^{-1} r_5 Y_6, &&\\
 Y_4 r_6 = q r_6 Y_4, &&
 Y_5 r_6 = q r_6 Y_5, &&
 Y_7 r_6 = q^{-1} r_6 Y_7, &&
 Y_5 r_7 = q^{-1} r_7 Y_5, &&\\
 Y_4 Y_1 = q Y_1 Y_4, &&
 Y_5 Y_1 = q Y_1 Y_5, &&
 Y_6 Y_1 = q^{-1} Y_1 Y_6, &&
 Y_7 Y_1 = q^{-1} Y_1 Y_7, && \\
 Y_5 Y_3 = q^{-1} Y_3 Y_5, &&
 Y_6 Y_3 = q Y_3 Y_6, &&
 Y_5 Y_4 = q Y_4 Y_5, &&
 Y_7 Y_4 = q^{-1} Y_4 Y_7, && \\
 Y_6 Y_5 = q Y_5 Y_6, &&
 Y_7 Y_5 = q^{-1} Y_5 Y_7, &&
 Y_7 Y_6 = q Y_6 Y_7, &&
 \end{aligned}
 \end{equation}

The scaled localised bulk relations, i.e. the generators of $I_{bulk}[S^{-1}_\triangle]\otimes_{\WW_{mon}} \K_{\chi_{mon}}$, are
\begin{equation}
  \begin{aligned}
    B_1 &= q^{2} r_1^{-1} r_2^{-1} r_4^{-1} r_5^{-1} r_6^{-1} r_7^{-1} Y_3^{-1} + q^{3/2} r_3 r_5 r_7^{-1} Y_2^{-1} + q^{3/2} \\
    B_2&= q^{2} M^3 r_2^3 r_3^{-1} r_5^{-2} r_6^3 r_7^3 Y_1 Y_5 Y_6^{-1} +  q  M^2 r_2^2 r_3^{-1} r_4 r_5^{-2} r_6 r_7^2 Y_1 Y_3 Y_4 Y_6^{-1} + q^{3/2} \\
    B_3 &=  q  M^{-1} r_2^{-2} r_3 r_5 r_6^{-1} r_7^{-1} Y_1^{-1} Y_2 Y_3^{-1} Y_6 Y_7 + q^{5/2} r_2 r_3 r_7 Y_3^{-1} Y_5 Y_7 + q^{3/2} \\
    B_4 &= q^{2} r_1^{-1} r_3^{-1} r_4^{-1} r_5^{-1} r_6^{-1} Y_5 Y_7 + q^{3/2} r_3^{-1} r_5^{-1} r_7 Y_2^{-1} + q^{3/2} \\
    B_5 &= q^{3/2} M^{-1} r_2^{-1} r_4^{-1} r_5^{-2} r_6^{-3} r_7^{-1} Y_6^{-1} +  q  r_4^{-1} r_5^{-1} Y_4^{-1} + q^{3/2} \\
    B_6 &= q^{-3/2} M L r_2^3 r_5^2 r_6^6 r_7^3 Y_3^{-1} Y_4^{-2} Y_5 Y_6 Y_7^{-1} + q^{-3} M^{-1} L r_2 r_3 r_4^{-1} r_5^2 r_6^4 r_7 Y_1^{-1} Y_3^{-1} Y_4^{-2} Y_6 Y_7^{-1} + q^{3/2} \\
    B_7 &= L^{-1} r_2^{-1} r_6^{-4} r_7^{-1} Y_1 Y_3 Y_4 Y_6^{-1} + q^{2} M^{-1} r_2 r_3 r_6^{-1} r_7 Y_1 Y_5 Y_6^{-1} + q^{3/2} \\
    B_8 &= q^{-1} M^{-1} L r_2 r_3 r_5 r_6^4 r_7 Y_1^{-1} Y_3^{-1} Y_4^{-1} Y_6 Y_7^{-1} + q^{3/2} M^{-1} r_2 r_3^2 r_4 r_6 r_7 Y_6^{-1} + q^{3/2}
  \end{aligned}
\end{equation}

\bibliography{main.bib}

\end{document}